\documentclass[reqno, 12pt]{amsart}
\pdfoutput=1
\makeatletter
\let\origsection=\section \def\section{\@ifstar{\origsection*}{\mysection}}
\def\mysection{\@startsection{section}{1}\z@{.7\linespacing\@plus\linespacing}{.5\linespacing}{\normalfont\scshape\centering\S}}
\makeatother

\usepackage{amsmath,amssymb,amsthm}
\usepackage{mathrsfs}
\usepackage{mathabx}\changenotsign
\usepackage{dsfont}
\usepackage[dvipsnames]{xcolor}

\usepackage{xcolor}
\usepackage[backref]{hyperref}
\hypersetup{
	colorlinks,
	linkcolor={red!60!black},
	citecolor={green!60!black},
	urlcolor={blue!60!black}
}

\usepackage[dvipsnames]{xcolor}
\usepackage[open,openlevel=2,atend]{bookmark}

\usepackage[abbrev,msc-links,backrefs]{amsrefs}
\usepackage{doi}

\renewcommand{\PrintDOI}[1]{\doi{#1}}

\usepackage[T1]{fontenc}
\usepackage{lmodern}
\usepackage[babel]{microtype}
\usepackage[english]{babel}

\linespread{1.26}
\usepackage{geometry}
\geometry{left=27.5mm,right=27.5mm, top=25mm, bottom=25mm}

\numberwithin{equation}{section}
\numberwithin{figure}{section}

\usepackage{enumitem}
\def\rmlabel{\upshape({\itshape \roman*\,})}

\def\alabel{\upshape({\itshape \alph*\,})}

\def\ag#1{	\tikz{\def\nn{#1};
		\pgfmathsetmacro\wie{3*\nn-1};
		\pgfmathsetmacro\mm{5*\nn};
		\pgfmathsetmacro\kk{\nn-1};
		
		\foreach \i in {0,...,\mm}{
			\coordinate (x\i) at (\i*360/\wie:1.3cm);}
		
		\foreach \i in {0,...,\wie}{	
			\foreach \j [evaluate=\j as \k using \i+\j+\nn] in {0,...,\kk}{
				\draw[green!75!black] (x\i)--(x\k);}
		}		
		
		\foreach \i in {0,...,\wie}{	
			\draw[black, fill=black] (x\i) circle (1pt);
		}		}}

		\newcommand{\mref}[1]{\ifmmode\textrm{\ref{#1}}\else\ref{#1}\fi}

		\let\polishlcross=\l
		\def\l{\ifmmode\ell\else\polishlcross\fi}

		\let\sm=\smallsetminus

		\makeatletter
		\def\moverlay{\mathpalette\mov@rlay}
		\def\mov@rlay#1#2{\leavevmode\vtop{   \baselineskip\z@skip \lineskiplimit-\maxdimen
				\ialign{\hfil$\m@th#1##$\hfil\cr#2\crcr}}}
		\newcommand{\charfusion}[3][\mathord]{
			#1{\ifx#1\mathop\vphantom{#2}\fi
				\mathpalette\mov@rlay{#2\cr#3}
			}
			\ifx#1\mathop\expandafter\displaylimits\fi}
		\makeatother
		
		\newcommand{\dcup}{\charfusion[\mathbin]{\cup}{\cdot}}

		\DeclareFontFamily{U}  {MnSymbolC}{}
		\DeclareSymbolFont{MnSyC}         {U}  {MnSymbolC}{m}{n}
		\DeclareFontShape{U}{MnSymbolC}{m}{n}{
			<-6>  MnSymbolC5
			<6-7>  MnSymbolC6
			<7-8>  MnSymbolC7
			<8-9>  MnSymbolC8
			<9-10> MnSymbolC9
			<10-12> MnSymbolC10
			<12->   MnSymbolC12}{}
		\DeclareMathSymbol{\powerset}{\mathord}{MnSyC}{180}
		
		\usepackage{tikz}
		\usetikzlibrary{calc,decorations.pathmorphing}
		\usetikzlibrary{math}
		\pgfdeclarelayer{background}
		\pgfdeclarelayer{foreground}
		\pgfdeclarelayer{front}
		\pgfsetlayers{background,main,foreground,front}
		
		
		\usepackage{subcaption}

		\let\epsilon=\varepsilon
		\let\eps=\epsilon
		\let\rho=\varrho
		\let\theta=\vartheta

		\def\NN{{\mathds N}}
		\def\GG{{\mathds G}}
		\def\ZZ{{\mathds Z}}
		
		\def\RR{{\mathds R}}
		
		\def\Ind{{\mathds 1}}
		\def\o{\omega_{\mu\nu}}

		\newcommand{\cG}{\mathscr{G}}
		\newcommand{\cH}{\mathscr{H}}

		\newcommand{\ex}{\mathrm{ex}}

		\newcommand{\ccC}{\mathscr{C}}

		\newcommand{\al}{\alpha}
		
		\let\Nf=\ccN

		\newcommand{\vv}[2]{{\Upsilon}_{#1}^{#2}}
		\newcommand{\ve}{\vv{i}{\mu\nu}}
		\newcommand{\vl}{\Upsilon_i}

		\newtheoremstyle{note}  {4pt}  {4pt}  {\sl}  {}  {\bfseries}  {.}  {.5em}          {}
		\newtheoremstyle{introthms}  {3pt}  {3pt}  {\itshape}  {}  {\bfseries}  {.}  {.5em}          {\thmnote{#3}}
		\newtheoremstyle{remark}  {2pt}  {2pt}  {\rm}  {}  {\bfseries}  {.}  {.3em}          {}
		
		\theoremstyle{plain}
		\newtheorem{theorem}{Theorem}[section]
		\newtheorem{lemma}[theorem]{Lemma}

		\newtheorem{conj}[theorem]{Conjecture}
		\newtheorem{cor}[theorem]{Corollary}
		\newtheorem{fact}[theorem]{Fact}
		\newtheorem{claim}[theorem]{Claim}

					\newtheorem*{conj*}{Conjecture}

		\theoremstyle{note}		
		\newtheorem{dfn}[theorem]{Definition}

		\theoremstyle{remark}

		\usepackage{accents}

		\usepackage{mathtools}
		\usepackage{lineno}
		\newcommand*\patchAmsMathEnvironmentForLineno[1]{%
			\expandafter\let\csname old#1\expandafter\endcsname\csname #1\endcsname
			\expandafter\let\csname oldend#1\expandafter\endcsname\csname end#1\endcsname
			\renewenvironment{#1}%
			{\linenomath\csname old#1\endcsname}%
			{\csname oldend#1\endcsname\endlinenomath}}%
		\newcommand*\patchBothAmsMathEnvironmentsForLineno[1]{%
			\patchAmsMathEnvironmentForLineno{#1}%
			\patchAmsMathEnvironmentForLineno{#1*}}%
		\AtBeginDocument{%
			\patchBothAmsMathEnvironmentsForLineno{equation}%
			\patchBothAmsMathEnvironmentsForLineno{align}%
			\patchBothAmsMathEnvironmentsForLineno{flalign}%
			\patchBothAmsMathEnvironmentsForLineno{alignat}%
			\patchBothAmsMathEnvironmentsForLineno{gather}%
			\patchBothAmsMathEnvironmentsForLineno{multline}%
		}
		\usepackage{multicol}
		\usepackage{todonotes}

		\usepackage{marginnote}

		\newcommand{\G}{\Gamma}

		\hyphenation{as-sump-tions}
\newcommand{\Nn}{N}
\let\lra=\longrightarrow

\begin{document}
\title[Andr\'asfai and Vega graphs in Ramsey-Tur\'an theory]{Andr\'asfai and Vega graphs 
in Ramsey-Tur\'an theory}
			
\author[T.~\L uczak]{Tomasz \L uczak}
\address{A. Mickiewicz University, Department of Discrete Mathematics, Pozna\'n, Poland}
\email{tomasz@amu.edu.pl}
\email{joaska@amu.edu.pl}
\thanks{The first author is partially supported by National Science Centre, Poland, 
grant 2017/27/B/ST1/00873.}
			
\author[J.~Polcyn]{Joanna Polcyn}
			
\author[Chr.~Reiher]{Christian Reiher}
\address{Fachbereich Mathematik, Universit\"at Hamburg, Hamburg, Germany}
\email{Christian.Reiher@uni-hamburg.de}
			
\subjclass[2010]{Primary: 05C35.}
\keywords{Ramsey-Tur\'an theory, extremal graph theory, triangle-free graphs.}
			
\begin{abstract}
	Given positive integers $n\ge s$, we let $\ex(n,s)$ denote the maximum number of 
	edges in a triangle-free graph $G$ on $n$ vertices with $\alpha(G)\le s$. 
	In the early sixties Andr\'{a}sfai conjectured that for  $n/3<s<n/2$ the function 
	$\ex(n, s)$ is piecewise quadratic with critical values at $s/n={k}/({3k-1})$
	for $k\in \NN$. We confirm that this is indeed the case whenever $s/n$ is slightly 
	larger than a  critical value, thus determining $\ex(n,s)$ 
	for all $n$ and $s$ such that $s/n\in [{k}/({3k-1}), {k}/({3k-1})+\gamma_k]$, 
	where $\gamma_k=\Theta(k^{-6})$.
\end{abstract}
			
\maketitle
			
\section{Introduction}

The structure of dense triangle-free graphs has been the subject of extensive studies 
for a long time. The first result in this direction is Mantel's celebrated theorem~\cite{M} 
from~1907, which states that balanced bipartite graphs are the densest 
triangle-free graphs. 
It is natural to ask for the densest triangle-free graphs when we impose some additional 
restrictions on them; for instance, we may bound their chromatic or independence number.

Let us first discuss the case when we require the chromatic number of a dense triangle-free 
graph to be large. It is easy to see that in this case the appropriate measure of the density 
of a graph is not the number of its edges (as one can always add a small graph of large 
chromatic number to a complete bipartite graph), but rather its minimum degree. 
This avenue of research was started by Andr\'asfai, Erd\H{o}s, and S\'os~\cite{AES} who 
showed that among triangle-free graphs with chromatic number three those with
the largest minimum degree are `balanced blow-ups' of the pentagon. Erd\H{o}s and 
Simonovits~\cite{ES} noticed that a construction due to Hajnal shows that for 
every $k\ge 2$, $\epsilon>0$, and sufficiently large~$n$, there exists a triangle-free graph 
on $n$ vertices whose minimum degree is larger than $(1/3-\epsilon)n$, and whose 
chromatic number is~$k$. On the other hand, they conjectured that every triangle-free 
graph on $n$ vertices whose minimum degree is larger than $n/3$ is $3$-colourable.
This was refuted by H\"aggkvist~\cite{H}, who found a $10$-regular triangle-free graph 
on $29$ vertices whose chromatic number is four. Jin~\cite{Jin} showed that this example 
is insofar optimal that every triangle-free graph whose minimum degree is strictly 
larger than~$10n/29$ has chromatic number three. Moreover, Thomassen~\cite{Th} proved 
that for every~$\epsilon>0$ there exists a constant~$c_\epsilon$ such that every 
triangle-free $n$-vertex graph with minimum degree at least $(1/3+\epsilon)n$ has 
chromatic number at most~$c_\epsilon$, and {\L}uczak~\cite{Lu06} supplemented this 
result by proving, roughly speaking, that for some constant $C_\eps$ there are at 
most $C_\eps$ `types' of such graphs. 
Finally, Brandt and Thomass\'e~\cite{BT} characterised all triangle-free graphs on~$n$ 
vertices whose minimum degree is larger than $n/3$; their theorem, stated in 
Section~\ref{sec:2141} below, plays a decisive r\^{o}le in the proof of our main result.

In this article, however, we mainly study triangle-free graphs $G$ with bounded 
independence number $\alpha(G)$. More specifically, we are interested in the behaviour 
of the function~$\ex(n, s)$, which for $n\ge s\ge 1$ gives the largest number of edges 
in a triangle-free graph on $n$ vertices whose independent sets have at most the 
size $s$, i.e., 
\begin{equation*}
	\ex(n,s)
	=
	\max\{e(G)\colon v(G)=n,\  G\not\supseteq K_3,\  \text{ and } \alpha(G)\le s \}\,.
\end{equation*}
Notice that Mantel's theorem yields
\[
	\ex(n,s)=\lfloor n^2/4\rfloor  
	\quad \text{ for every }
	s > \lfloor n/2\rfloor\,.
\]
Next we observe that in a triangle-free graph the neighbourhood of every vertex 
forms an independent set, which implies the so-called {\it trivial bound}
\[
	\ex(n,s)\le ns/2
\]
for all $n$ and $s$. Brandt~\cite{B10} provided several explicit constructions showing
that this upper bound is asymptotically optimal for $s\le n/3$, i.e., that we have  
\[
	\ex(n,s)=ns/2+o(n^2)
\]
in this range. Thus, it remains to study the behaviour of $\ex(n,s)$ for $s/n\in(1/3,1/2)$. 
 
This line of research was started over 50 years ago by Andr\'asfai~\cite{A}, 
who proved 
\[
	\ex(n,s)=n^2-4ns+5s^2 
	\quad \text{ for }
	s/n\in [2/5,1/2]\,.
\]
He also speculated that $\ex(n,s)$ might be a piecewise quadratic function with cusps at 
points of the form $s=kn/(3k-1)$. We slightly revised his conjecture in~\cite{LPR} and 
resolved the next case by showing 
\[
	\ex(n,s)=3n^2-15ns+20s^2 
	\quad \text{ for }
	s/n\in [3/8, 2/5]\,.
\]
The new version of the conjecture reads as follows. 
 
\begin{conj}\label{hip}
	If  $n/3<s\le n/2$, then 
	\begin{equation}\label{eq:2334}
		\ex(n,s)=\min_k g_k(n,s)\,,
	\end{equation}
	where
	\begin{equation}\label{eq2}
		g_k(n,s)=k(k-1)n^2/2-k(3k-4)ns+(3k-4)(3k-1)s^2/2
	\end{equation}
	for every $k\ge 1$. 
\end{conj}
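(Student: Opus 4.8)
The plan is to prove both inequalities implicit in~\eqref{eq:2334} for all admissible pairs $(n,s)$, that is, for every $n$ and every integer $s$ with $n/3<s\le n/2$, without restricting to a neighbourhood of the cusps. The boundary case $s=\lfloor n/2\rfloor$ is Mantel's theorem, and there $g_2(n,\lfloor n/2\rfloor)=\lfloor n^2/4\rfloor=\min_kg_k(n,\lfloor n/2\rfloor)$, so assume $n/3<s<n/2$ and fix the unique $k\ge2$ with
\[
	s/n\in I_k:=\bigl[\,k/(3k-1),\ (k-1)/(3k-4)\,\bigr].
\]
A short computation with the quadratics~\eqref{eq2} shows $\min_jg_j(n,s)=g_k(n,s)$ throughout $I_k$ (the curves $g_k$ and $g_{k-1}$ agree at the common endpoint $(k-1)/(3k-4)$, and $g_k$, $g_{k+1}$ agree at $k/(3k-1)$), so it remains to prove $\ex(n,s)\ge g_k(n,s)$ and $\ex(n,s)\le g_k(n,s)$.

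For the lower bound it suffices, for each $k$ and each $s/n\in I_k$, to build a triangle-free graph on $n$ vertices with independence number at most $s$ and exactly $g_k(n,s)$ edges. The building blocks are the Andr\'asfai graph $\mathrm{And}_k$, the circulant on $\ZZ_{3k-1}$ with connection set $\{k,k{+}1,\dots,2k{-}1\}$ --- it is $k$-regular, triangle-free, and has independence number exactly $k$, its non-neighbourhoods being the sets of cyclic diameter below $k$ --- together with the Vega graphs of the title. Given such a graph $H$, a blow-up of $H$ with vertex weights $a_1,\dots,a_{|H|}$ summing to $n$ has $e(G)=\sum_{ij\in E(H)}a_ia_j$ edges and $\alpha(G)=\max\bigl\{\sum_{i\in I}a_i:I\ \text{independent in}\ H\bigr\}$, so maximising $e(G)$ subject to $\alpha(G)\le s$ is a quadratic program with a piecewise-linear constraint. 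Solving it one finds the optimum equals $g_k(n,s)$ on all of $I_k$: at the cusp $s/n=k/(3k-1)$ the extremiser is the balanced blow-up of $\mathrm{And}_k$, which is $k$-regular across $3k-1$ classes and hence has $\tfrac{k}{2(3k-1)}n^2$ edges and independence ratio $k/(3k-1)$, exactly matching $g_k$; as $s/n$ grows one enlarges the classes meeting a maximum independent set, and near the right endpoint the extremiser degenerates to a blow-up of $\mathrm{And}_{k-1}$. This part is bookkeeping.

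The substance is the upper bound $\ex(n,s)\le g_k(n,s)$. Let $G$ be extremal. Then $G$ is maximal triangle-free: if some non-edge could be added without creating a triangle, the resulting graph would still have $\alpha\le s$ but more edges, contradicting extremality. Suppose first $\delta(G)>n/3$. Then the Brandt--Thomass\'e theorem, stated in Section~\ref{sec:2141}, applies: $G$ is a blow-up of a Vega graph or of one of finitely many exceptional triangle-free graphs, each of chromatic number at most~$4$. For every such target $H$, bounding the number of edges of a weighted blow-up of $H$ with weighted independence number at most $s$ is one of a controlled family of explicit quadratic programs, and the plan is to verify that their joint maximum, as a function of $(n,s)$ with $s/n\in I_k$, is precisely $g_k(n,s)$, attained by a blow-up of $\mathrm{And}_k$ (or, at an endpoint of $I_k$, of a Vega graph or of $\mathrm{And}_{k\pm1}$). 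It then remains to dispose of the case $\delta(G)\le n/3$: picking a vertex $v$ of minimum degree, $N(v)$ is an independent set of size $\delta(G)$ and the non-neighbourhood $U=V(G)\setminus N[v]$ induces a triangle-free graph on $n-1-\delta(G)\ge 2n/3-1$ vertices with $\alpha(G[U])\le s-1$ (one may adjoin $v$ to any independent subset of $U$); writing $e(G)=\delta(G)+e(N(v),U)+e(G[U])$ and estimating $e(N(v),U)$ through $\sum_{a\in N(v)}d_a$ one obtains a recursive inequality which, combined with induction and the convexity of $g_k$ in both variables, should be incompatible with extremality unless $\delta(G)>n/3$ after all.

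The difficulty is concentrated precisely in that last case, together with the need for uniformity in $k$. The crude bounds $e(N(v),U)\le\delta(G)(s-1)$ and $e(G[U])\le\lfloor|U|^2/4\rfloor$ are too wasteful in the interior of $I_k$ --- they close the gap only very close to the cusps, which is what confines the partial result announced in the abstract --- so the low-degree reduction must be run more carefully, exploiting that the high-degree vertices of $N(v)$ keep $G[U]$ far from complete bipartite, or replacing the deletion of a single vertex by peeling off an entire almost-independent layer and inducting on $k$ rather than on $n$ (so that $(s{-}1)/|U|$ stays inside the admissible window). The second obstacle is that the Brandt--Thomass\'e classification, and any stability statement one extracts from it, holds for $n$ large in a way that could a priori depend on the target graph $H$ and hence on $k$; to conclude for \emph{all} $n$ and $s$ one needs the quantitative version with an absolute, or at worst slowly growing, threshold, so that the final bound $\ex(n,s)\le g_k(n,s)$ does not covertly require $n$ huge compared with $1/(s/n-1/3)$.
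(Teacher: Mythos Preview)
The statement you are attempting to prove is a \emph{conjecture}; the paper does not prove it and explicitly presents it as open. What the paper establishes are the partial results Theorem~\ref{thm:main} (near the cusps) and Theorem~\ref{thm:mindeg} (a minimum-degree version). Your proposal is therefore not to be compared with a proof in the paper, because there is none.

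Your write-up is a plan, not a proof, and you say so yourself: for the case $\delta(G)\le n/3$ you concede that the crude bounds ``close the gap only very close to the cusps'' and that the reduction ``must be run more carefully'', offering two speculative strategies (careful peeling, inducting on $k$) without carrying either out. This is precisely the obstruction that leaves Conjecture~\ref{hip} open; your recursive inequality $e(G)=\delta(G)+e(N(v),U)+e(G[U])$ combined with induction and convexity does not yield $e(G)\le g_k(n,s)$ away from the cusps, and no amount of rewording makes it do so. The paper's own vertex-deletion argument (proof of Theorem~\ref{thm:main}) removes \emph{all} low-degree vertices at once and uses the quantitative slack $(1-\zeta)s$ to absorb them back, but this only works when $s/n$ is within $\gamma_k$ of $k/(3k-1)$.

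There are also two inaccuracies in the high-degree part. First, you misstate the Brandt--Thomass\'e theorem: it says a maximal triangle-free graph with $\delta>n/3$ is a proper blow-up of an Andr\'asfai graph $\Gamma_\ell$ \emph{or} a Vega graph $\Upsilon_i^{\mu\nu}$, both infinite families---not ``a Vega graph or one of finitely many exceptional graphs''. Second, the theorem holds for all $n$, with no asymptotic threshold, so your closing worry about needing $n$ large is unfounded. More substantively, even granting $\delta(G)>n/3$, the programme ``solve the quadratic programme over blow-ups of each target $H$'' is not automatic: Lemma~\ref{ag0} and Lemma~\ref{l:vegaex} give $e(G)\le g_\ell(n,s)$ for the parameter $\ell$ of the target, whereas you need $e(G)\le g_k(n,s)=\min_j g_j(n,s)$, and typically $\ell\ge k$ with $g_\ell\ge g_k$ on $I_k$. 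The paper bridges this by imposing the stronger hypothesis $\delta(H)>(k+1)n/(3k+2)$ and invoking Lemma~\ref{l:kregular} to force $\ell=k$; handling all of $\delta>n/3$ is, in the authors' words, ``considerably more work'' that is not done here.
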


Let us remark that we also have a conjecture on the extremal graphs for which 
equality holds in~\eqref{eq:2334}. Since the definition of these graphs requires some 
preparation, we state this stronger conjecture only at the end the article. 

As for the function $g(n, s)= \min_k g_k(n,s)$, which stands 
on the right side of~\eqref{eq:2334}, an elementary calculation (see~\cite{LPR}*{Cor.~2.7})
shows that for $k\ge 2$ and $\frac{k}{3k-1}n \le s < \frac{k-1}{3k-4}n$ we 
have $g(n, s)=g_k(n, s)$. Thus, for fixed $n$ the function $g(n, s)$ is piecewise quadratic 
in $s\in (n/3, n/2)$ with cusps at the points $s=kn/(3k-1)$ for $k\ge 2$. 

The main goal of this work is to add further plausibility to Conjecture~\ref{hip} 
by proving it whenever $s/n$ is slightly larger than one of the `critical points' $k/(3k-1)$.

\begin{theorem}\label{thm:main}
	For every $k\ge 2$ there exists $\gamma = \gamma(k)>0$ such that 
	\[
		\ex(n,s)=g_k(n,s)=\min_\ell g_\ell(n,s)
	\]
	whenever 
	\[	
		\frac{k}{3k-1}n \le  s \le \left(\frac{k}{3k-1}+\gamma\right)n\,.
	\]
	For instance, this holds for $\gamma(k)=(600k^6)^{-1}$.
\end{theorem}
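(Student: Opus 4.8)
The plan is to prove the lower bound $\ex(n,s)\ge g_k(n,s)$ by an explicit construction and the matching upper bound via the Brandt--Thomass\'e structure theorem; the equality $g_k(n,s)=\min_\ell g_\ell(n,s)$ over this range is immediate from \cite{LPR}*{Cor.~2.7}, so the content is the identity $\ex(n,s)=g_k(n,s)$. For the lower bound I would exhibit, for each admissible $(n,s)$, a maximal triangle-free graph with independence number at most $s$ and exactly $g_k(n,s)$ edges. A short computation gives $g_k(n,s)=ns/2$ at the left endpoint $s/n=k/(3k-1)$, and this is met by a (nearly) balanced blow-up of the $k$-regular Andr\'asfai graph $\mathrm{And}_k$ on $3k-1$ vertices; as $s/n$ grows across the short interval, the surplus $s-\tfrac{k}{3k-1}n\ge 0$ is absorbed by passing instead to a suitably weighted blow-up of the Vega graph $\mathrm V_k$, a fixed small modification of $\mathrm{And}_k$. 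Computing $\sum_{ij\in E}w_iw_j$ and the independence number of this blow-up and matching them against $g_k(n,s)$ should be routine; these graphs are also the ones behind the extremal-graph conjecture stated at the end of the article.

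For the upper bound, let $G$ be a triangle-free graph on $n$ vertices with $\alpha(G)\le s$ and $e(G)=\ex(n,s)$. Such a $G$ is maximal triangle-free, since adding a non-edge that creates no triangle would keep $\alpha\le s$ while raising $e(G)$; so as soon as $\delta(G)>n/3$, the Brandt--Thomass\'e theorem (Section~\ref{sec:2141}) applies and $G$ is a blow-up $H(\mathbf w)$ of some graph $H$ in the Brandt--Thomass\'e family $\mathcal F$, which consists of the Andr\'asfai graphs, the Vega graphs, and finitely many further graphs. To secure $\delta(G)>n/3$ I would use the standard vertex-deletion and induction argument: at the left endpoint $\partial g_k/\partial n=kn/(3k-1)>n/3$ because $k/(3k-1)>1/3$, and this persists across the tiny admissible window, so removing a vertex of degree at most $n/3$ and applying the result to $(n-1,s)$ would contradict $e(G)>g_k(n,s)$. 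Handling the boundary of the admissible parameter range, where shrinking $n$ leaves the window, is a technicality that I expect to need a separate, more hands-on argument.

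It then suffices to show that for every $H\in\mathcal F$ the quadratic program of maximising $\sum_{ij\in E(H)}w_iw_j$ over $\mathbf w\ge 0$ with $\sum_i w_i=n$ and $w(I):=\sum_{i\in I}w_i\le s$ for all independent $I\subseteq V(H)$ (the last family of constraints being exactly $\alpha(H(\mathbf w))\le s$) has optimum at most $g_k(n,s)$. The identity $2\sum_{ij\in E(H)}w_iw_j=\sum_v w_v\,w(N_H(v))\le sn$ gives only the crude bound $ns/2$, with equality for ``$s$-regular'' blow-ups; since $g_k(n,s)<ns/2$ strictly once $s/n>k/(3k-1)$, the slack has to come from the structure of $H$. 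I would isolate $\mathrm{And}_k$ and $\mathrm V_k$ from the rest of $\mathcal F$. For a graph of lower level, that is, $\mathrm{And}_j$ or $\mathrm V_j$ with $j>k$, whose balanced independence ratio is below $k/(3k-1)$, forcing $w(I)\le s$ at the largest independent $I$ drives the weights away from balance, and a convexity estimate should show that each unit of imbalance destroys strictly more edges than it can create; for the higher-level graphs and the finitely many sporadic ones, whose independence ratios exceed $k/(3k-1)+\gamma$, the constraint $w(I)\le s$ is harder still to meet profitably, or the same estimate applies. The value $\gamma=\Theta(k^{-6})$ is taken comfortably inside the gap $\Theta(k^{-2})$ between consecutive critical ratios and below the margins produced by the $\mathrm V_k$ analysis. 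For $\mathrm{And}_k$ and $\mathrm V_k$ themselves one has to do the genuine work: determine which independent sets are tight at the optimum, use them to cut the problem down to a handful of scalar variables, and verify that the constrained maximum equals $g_k(n,s)$, attained by the constructions from the lower bound.

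I expect the main obstacle to be this last analysis of $\mathrm{And}_k$ and $\mathrm V_k$: forcing the explicit quadratic $g_k(n,s)$ to emerge, uniformly in $k$, as the exact optimum of a linearly constrained quadratic program over a combinatorially described polytope, together with a clean description of the maximizers, which is needed for the extremal-graph statement. A secondary obstacle is organisational: making $\mathcal F$ explicit enough that every member other than $\mathrm{And}_k$ and $\mathrm V_k$ is disposed of by the imbalance and independence-ratio estimates with no case left out, and closing the reduction to $\delta(G)>n/3$ at the edges of the admissible parameter window.
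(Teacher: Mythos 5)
Your outline matches the paper's skeleton — lower bound by constructions, upper bound via Brandt--Thomass\'e with a per-family blow-up analysis — but the step you flag as a ``technicality,'' namely securing $\delta(G)>n/3$, is precisely where your proposal has a genuine gap, and it is where the paper departs from your plan. A one-vertex deletion plus induction on $(n-1,s)$ does not close: after removing a vertex, $s/(n-1)$ may leave the admissible window $\bigl[k/(3k-1),\,k/(3k-1)+\gamma\bigr]$, so the inductive hypothesis no longer applies; and deleting a vertex of degree at most $n/3$ does not yield an inequality of the form $e(G-v)\ge g_k(n-1,s)$ that one could pit against such a hypothesis in any case. The paper sidesteps this entirely by proving a self-contained minimum-degree theorem (Theorem~\ref{thm:mindeg}): $e(H)\le g_k(n,s)$ for \emph{every} triangle-free $H$ on $n$ vertices with $\alpha(H)\le s$ and $\delta(H)>\tfrac{k+1}{3k+2}n$, with no restriction on $s$ to a small window. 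In the proof of Theorem~\ref{thm:main} it then removes \emph{in one step} the whole set $A$ of vertices of degree below $(1-\zeta)s$, shows $|A|<n/(25k^4)$ by an averaging count, applies Theorem~\ref{thm:mindeg} to $G'=G-A$, and propagates the bound back via $e(G)\le e(G')+(1-\zeta)|A|s$. No induction, no boundary issues, and the choice $\gamma=\Theta(k^{-6})$, $\zeta=\Theta(k^{-2})$ makes all estimates go through uniformly.

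Two secondary corrections. The Brandt--Thomass\'e family is exactly the Andr\'asfai graphs and the Vega graphs (both infinite families, no sporadic extras), so there is no separate list to dispose of; and the way the level is pinned down is not your convexity/independence-ratio heuristic (which is muddled: for $j>k$ the balanced blow-up of $\G_j$ already satisfies the independence constraint, so nothing is being ``forced away from balance'') but rather the degree constraints. Lemma~\ref{l:kregular} observes that a proper blow-up of a graph admitting a $j$-regular blow-up on $3j-1$ vertices satisfies $\delta\le jn/(3j-1)\le\Delta$, and combining this with $\delta>(k+1)n/(3k+2)$ and $\Delta\le\alpha\le s<(k-1)n/(3k-4)$ forces $j=k$. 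Once $j=k$, the upper bound is not run as a quadratic program over the independence polytope but via a direct double count (Lemma~\ref{edges}, Corollary~\ref{blowupedges}) once the vertex-class sizes have been bounded from below (Fact~\ref{Vi} for Andr\'asfai; Facts~\ref{Vegaup} and~\ref{Vegadown} for Vega, where the lower bound is the real work). Your instinct that the Vega case is the hard one is right; your expectation that one must identify the tight independent sets of a QP is, in the paper's formulation, replaced by the cleaner task of bounding each class $|Z|$ below by a multiple of $3s-n$ minus a defect $\lambda=(3k-1)s-kn$.
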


Along the way, we establish the following minimum degree version of Conjecture~\ref{hip}.
 
\begin{theorem}\label{thm:mindeg}
	Let  $k\ge 2$ and $n\ge s \ge 1$. If $H$ denotes a triangle-free graph on~$n$
	vertices with $\alpha(H)\le s$ and 
	\[
		\delta(H) > \frac{k+1}{3k+2}n\,,
	\]
	then $e(H) \le g_k(n,s)$.  
\end{theorem}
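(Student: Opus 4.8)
The plan is to reduce the assertion to a quadratic optimisation over blow\nobreakdash-ups of a single Andr\'asfai graph and then to carry out that optimisation.

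Since adding edges to a triangle\nobreakdash-free graph while keeping it triangle\nobreakdash-free can only increase $e(H)$ and $\delta(H)$ and can only decrease $\alpha(H)$, we may assume $H$ is maximal triangle\nobreakdash-free. As $\frac{k+1}{3k+2}>\frac13$, the hypothesis gives $\delta(H)>n/3$, so the structure theorem of Brandt and Thomass\'e~\cite{BT} applies: either $H$ is homomorphic to some Andr\'asfai graph $\Gamma_\ell$ (which has $3\ell-1$ vertices and independence number $\ell$), or $H$ is a blow\nobreakdash-up of one of the finitely many Vega graphs. A blow\nobreakdash-up of $\Gamma_\ell$ has minimum degree at most $\frac{\ell}{3\ell-1}n$, a quantity strictly decreasing in $\ell$; since $\frac{k+1}{3k+2}=\frac{k+1}{3(k+1)-1}$, the bound $\delta(H)>\frac{k+1}{3k+2}n$ forces $\ell\le k$ in the first case. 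In the second case, the (vertex\nobreakdash-transitive) Vega graphs have degree\nobreakdash-to\nobreakdash-order ratio at most $\tfrac{10}{29}$ but independence\nobreakdash-to\nobreakdash-order ratio exceeding $\tfrac38$; so this case does not occur for $k\le 9$, while for $k\ge10$ it gives $\alpha(H)/n>\tfrac38$, whence $e(H)\le\ex(n,s)\le\gk$ follows from the established identity $\ex(n,s)=\min\{g_2(n,s),g_3(n,s)\}$ for $s/n\ge\tfrac38$ (see~\cite{A} and~\cite{LPR}). So we may assume $H$ is homomorphic to $\Gamma_\ell$ with $\ell\le k$; since the Andr\'asfai graphs form an increasing chain $\Gamma_1\to\Gamma_2\to\cdots$ in the homomorphism order, $H$ is in fact homomorphic to $\Gamma_k$, and, adding edges once more, we may assume $H=\Gamma_k[V_1,\dots,V_{3k-1}]$ is the blow\nobreakdash-up of $\Gamma_k$ got by replacing its vertices by independent sets $V_i$ of sizes $n_i=|V_i|\ge0$ with $\sum_i n_i=n$, and its edges by complete bipartite graphs.

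Realising $\Gamma_k=\mathrm{Cay}\bigl(\ZZ_{3k-1},\{k,k+1,\dots,2k-1\}\bigr)$, one checks that the maximal independent sets of $\Gamma_k$ are precisely the $3k-1$ arcs of $k$ consecutive residues. Writing $W_j=n_j+n_{j+1}+\dots+n_{j+k-1}$, this turns $\alpha(H)\le s$ into the window inequalities $W_j\le s$ for all $j\in\ZZ_{3k-1}$, turns $\delta(H)>\frac{k+1}{3k+2}n$ into matching lower bounds $W_{i+k}>\frac{k+1}{3k+2}n$ for every $i$ with $n_i>0$, and gives $e(H)=\tfrac12\sum_{i\in\ZZ_{3k-1}}W_{i+k}\,n_i$. (Averaging the window inequalities recovers $kn\le(3k-1)s$, placing us automatically in the range $s/n\ge k/(3k-1)$ around which $g_k$ is the minimising branch.) It therefore suffices to show that
\begin{equation*}
  \Phi(\mathbf n):=\tfrac12\sum_{i\in\ZZ_{3k-1}}W_{i+k}\,n_i\ \le\ \gk
\end{equation*}
for every $\mathbf n\in\RR_{\ge0}^{3k-1}$ with $\sum_i n_i=n$, with $W_j\le s$ for all $j$, and with $W_{i+k}>\frac{k+1}{3k+2}n$ whenever $n_i>0$. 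This last, disjunctive, constraint matters: without any lower bound on the windows the maximum of $\Phi$ over $\{\sum_i n_i=n,\ W_j\le s\}$ can exceed $\gk$, as the blow\nobreakdash-up of $\Gamma_{k-1}$ pulled back along $\Gamma_{k-1}\to\Gamma_k$ shows.

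The optimisation itself is the heart of the matter, and I would approach it by extreme points. Fixing the set of zero coordinates, the feasible region is a polytope, and transferring mass between two parts non\nobreakdash-adjacent in $\Gamma_k$ changes $\Phi$ affinely (its adjacency matrix has zero diagonal) while transferring between adjacent parts changes it convexly; in either case a maximiser can be pushed until one more of the constraints $n_i=0$, $W_j=s$ becomes tight, so the maximum is attained at a vertex. One then shows that an optimal $\mathbf n$ has a very restricted shape---few distinct part sizes, with the vanishing parts and the tight windows laid out regularly, the extremal ones being precisely the configurations underlying $\cc$ together with their degenerations---after which $\Phi(\mathbf n)\le\gk$ becomes an explicit polynomial inequality in $n$ and $s$. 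This I would verify using the identities
\begin{equation*}
  g_k\bigl(n,\tfrac{k}{3k-1}n\bigr)=\tfrac{k}{2(3k-1)}n^2,\qquad \partial_s g_k\bigl(n,\tfrac{k}{3k-1}n\bigr)=0,\qquad \partial_s^2 g_k=(3k-4)(3k-1),
\end{equation*}
which single out $g_k$ as the quadratic agreeing with the balanced blow\nobreakdash-up of $\Gamma_k$ to second order, so the verification reduces to matching the second\nobreakdash-order effect of unbalancing on $\Phi$ against the curvature of $g_k$. A cleaner route, could one find it, would be a manifest certificate expressing $\gk-\Phi(\mathbf n)$ as a non\nobreakdash-negative combination of the products $(s-W_j)n_{j'}$, of the products $\bigl(W_{j}-\tfrac{k+1}{3k+2}n\bigr)n_{j'}$, and of squares of linear forms in the $n_i$.

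The principal obstacle is this last step. It is an indefinite---not concave---quadratic programme over a region that is a union of polytopes, so it genuinely requires identifying and handling the optimal vertex configurations; and since equality holds at the balanced blow\nobreakdash-up, soft estimates (the crude bound $\Phi\le sn/2$, or a spectral bound on $\mathbf n-\tfrac{n}{3k-1}\mathbf 1$) are far too lossy, and the exact coefficients of $g_k$ must be tracked throughout. The auxiliary treatment of the Vega graphs for $k\ge10$ is routine once their independence numbers and the known small\nobreakdash-index cases of Conjecture~\ref{hip} are invoked.
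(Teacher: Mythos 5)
Your outline starts along the same lines as the paper's proof (pass to a maximal triangle-free supergraph, apply Brandt--Thomass\'e, split into Andr\'asfai and Vega cases), but there are two genuine gaps, one of which is a concrete error.

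\textbf{The Vega case is dismissed on a false premise.} You claim that blow-ups of Vega graphs have independence ratio exceeding $3/8$, so that this case would reduce to the known $k\in\{2,3\}$ regime. This is wrong. The $k$-regular weighted blow-up $G_i^{\mu\nu}=\Upsilon_i^{\mu\nu}(\omega_{\mu\nu})$ has $3k-1$ vertices with $k=9i-(6+\mu+\nu)$, and its independence number is $k$ (not larger). Already for $i=2$, $\mu=\nu=1$, one gets a $10$-regular graph on $29$ vertices with independence ratio $10/29\approx0.345<3/8$; for larger $i$ the ratio tends to $1/3$. (Incidentally, Vega graphs are neither vertex-transitive nor finite in number.) So the Vega case cannot be swept under the rug; the paper devotes a full section to it, proving $e(H)\le g_k(n,s)$ for blow-ups of $\Upsilon_i^{\mu\nu}$ by a delicate weight-function argument (Lemma~\ref{l:vegaex}, via Facts~\ref{Vegaup} and~\ref{Vegadown}).

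\textbf{The core quadratic bound is left unproved, and the envisioned route is harder than necessary.} You correctly reduce the Andr\'asfai case to bounding $\Phi(\mathbf n)=\tfrac12\sum_i W_{i+k}n_i$ subject to window constraints, but then declare the resulting ``indefinite quadratic programme over a union of polytopes'' to be the principal obstacle and leave it unresolved. The paper avoids this entirely: the window inequalities $W_j\le s$ alone (no minimum-degree lower bounds needed here) force \emph{two-sided} bounds on each part size, namely $(k-1)n-(3k-4)s\le n_i\le 3s-n$ (Fact~\ref{Vi}), and a one-line double-count (Lemma~\ref{edges}) then gives $2e(H)\le ns-x(rs-kn)$ for any uniform lower bound $x$ on the parts. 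Plugging $x=(k-1)n-(3k-4)s$, $r=3k-1$ yields exactly $g_k(n,s)$ (Corollary~\ref{blowupedges}). This linearisation via the lower bound on part sizes is the key idea you are missing, and it makes the ``quadratic optimisation'' disappear.

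Two further remarks on the framing. You only establish $\ell\le k$ and then pull $H$ back along $\Gamma_\ell\to\Gamma_k$, which produces empty classes and forces you to carry the disjunctive degree constraint through the optimisation. The paper instead first invokes Fact~\ref{f:s} to restrict to $s<(k-1)n/(3k-4)$, so that $\Delta(H)\le\alpha(H)\le s<(k-1)n/(3k-4)$ together with $\delta(H)>(k+1)n/(3k+2)$ pins $\ell=k$ exactly (Lemma~\ref{l:kregular}), keeping all parts nonempty and making both structural cases uniform. Finally, note that it is not true in general that a proper blow-up of $\Gamma_\ell$ with $\delta(H)>\frac{k+1}{3k+2}n$ has $\ell\ge k$ as well without the matching $\Delta$ upper bound, which is why that preliminary range reduction is genuinely needed.
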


Let us mention that similar problems could be and, in many cases, have been, 
considered for $K_r$-free graphs and, more generally, for $H$-free graphs for 
any given graph $H$. It hardly seems necessary to recall that Tur\'an's problem to 
determine the maximum number of edges in an $H$-free graph on $n$ vertices is 
fairly well understood thanks to the work of Tur\'an himself~\cite{T}, Erd\H{o}s,
Stone, and Simonovits~\cites{ES46, ESim}. The studies of the {\it chromatic threshold} 
(equal to 1/3 for triangle-free graphs by the aforementioned result of Thomassen~\cite{Th}) 
were begun by {\L}uczak and Thomass\'e~\cite{LT} and culminated in the work of 
Allen {\it et al.}~\cite{Aet} who determined this parameter for $H$-free graphs
when an arbitrary graph $H$ is given (for the precise definition of the `chromatic threshold'
we refer to either of those two articles).
 
The question on the behaviour of $\ex(n,s)$ considered in this work belongs to an area  
called {\it Ramsey-Tur\'an theory}, which has been initiated by Vera T.~S\'os and  
extensively investigated during the last fifty years. There is a comprehensive survey 
on this subject by S\'os and Simonovits~\cite{SS01}. Important milestones 
in the Ramsey-Tur\'an theory of general~\mbox{$K_r$-free} graphs were obtained by Bollob\'as, 
Erd\H{o}s, Hajnal, S\'os, Szemer\'edi~\cites{BE76, EHSS, ES69, Sz72}, and, more recently, 
by L\"{u}ders and Reiher~\cite{LR-a}. Due to their work, we asymptotically 
know the value of 
\[
	\ex_r(n,s)
	=
	\max\{e(G)\colon v(G)=n,\  G\not\supseteq K_r,\ \text{ and } \alpha(G)\le s \}
\]
for all $r\ge 3$ provided that $s/n \ll r^{-1}$ is sufficiently small. 
It would, of course, be interesting to study this function for larger values of $s/n$ as 
well, but, as the present article demonstrates, even the case $r=3$ of triangles seems 
to be fairly difficult.   

We believe that with considerably more work the methods of this article would allow 
to prove Theorem~\ref{thm:mindeg} under the weaker assumption $\delta(H)>n/3$ as well, which 
would lead to some numerical improvement on the value of $\gamma(k)$ in Theorem~\ref{thm:main}.

The structure of the article is the following. In Section~\ref{sec:1431} we start with 
the definition and some basic properties of the blow-up operation. The two subsequent  
sections define and study Andr\'asfai and Vega graphs, which -- by a result of 
Brandt and Thomass\'e -- 
are the 
main protagonists in the story of dense triangle-free graphs (see 
Theorem~\ref{th:dense} below). 
In particular, in this part of the article we prove some special cases of 
Theorem~\ref{thm:main} addressing blow-ups of these two types of graphs (see 
Lemma~\ref{ag0} and Lemma~\ref{l:vegaex}). In Section~\ref{sec:2141} these results 
will be employed in the proofs of the Theorems~\ref{thm:main} and~\ref{thm:mindeg}. 
Moreover, we shall state there a precise version of our conjecture on extremal cases 
in Conjecture~\ref{hip}.
These are the same as the extremal graphs for the two aforementioned lemmata, which 
we characterise in Lemma~\ref{lem:2130} and Lemma~\ref{l:vegaextremal}, respectively.

\section{Blow-ups of graphs}
\label{sec:1431}

Given a graph $F$ with vertex set $V(F)=\{v_1, \ldots, v_r\}$, a {\sl blow-up of $F$} 
is a graph $H$ obtained from $F$ upon replacing its vertices by 
independent sets $V_1, V_2, \ldots, V_r$, and each of its edges $v_iv_j\in E(F)$, 
$1\le i<j\le r$, by the complete bipartite graph $K(V_i,V_j)$ between $V_i$ and $V_j$. 
The sets $V_1, \ldots, V_r$ are called the {\sl vertex classes} of $H$. As above, we shall 
always denote the vertices of the original graph $F$ by lower case letters and the vertex 
classes of~$H$ by capitalised versions of the same letters. A blow-up is {\sl proper} if 
all vertex classes are non-empty and {\sl balanced} if all of them are of the same size. 
As the isomorphism type of~$H$ depends only on the sizes of its vertex classes it will be 
convenient to write $H=F(h)$, where the function $h\colon V(F)\lra \ZZ_{\ge 0}$ is defined 
by $h(v_i)=|V_i|$ for every $v_i\in V(F)$. In the special case where $h$ is the constant
function attaining always the value $p$ it will be convenient to write $H=F(p)$. 
For later use we remark that a blow-up of a blow-up of $F$ is again a blow-up of $F$.

Resuming the discussion of the blow-up $H$ of $F$ with vertex classes $V_1, \ldots, V_r$ 
we set 
\[
	N(V_i)=\bigcup_{v_j\in \Nn(v_i)} V_j
\]
for each of these vertex classes, where $\Nn(v_i)$ denotes the neighbourhood of $v_i$ 
in $F$. Clearly, all vertices in $V_i$ have the neighbourhood $N(V_i)$ in $H$ and,
consequently, every non-empty vertex class $V_i$ satisfies
\begin{equation}\label{eq:neighbours}
	\delta(H) \le |N(V_i)| \le \Delta (H)\,.
\end{equation}

Now simple averaging leads to the following observation. 

\begin{fact}\label{fact:1641}
	Let $H$ be an $n$-vertex blow-up of a $k$-regular graph $F$ on the $r$-element 
	vertex set $V(F)=\{v_1, \ldots, v_r\}$. 
	\begin{enumerate}[label=\alabel]
		\item\label{it:1641a} We have 
				\begin{equation}\label{eq:1734}
					\sum_{i=1}^r |N(V_i)|=kn\,.
				\end{equation}
		\item\label{it:1641b} If the blow-up $H$ is proper, then 
		\[
			\delta(H)\le \frac{kn}r \le \Delta(H)\,.
		\]
	\end{enumerate}
\end{fact}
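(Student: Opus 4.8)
The plan is to prove part \ref{it:1641a} by a double-counting argument and then derive part \ref{it:1641b} by an elementary averaging step. For \ref{it:1641a}, write $h(v_i) = |V_i|$ for each $i$, so that $\sum_{i=1}^r h(v_i) = n$ since the vertex classes partition $V(H)$. Expanding the definition of $N(V_i)$ gives
\[
	\sum_{i=1}^r |N(V_i)| = \sum_{i=1}^r \sum_{v_j \in \Nn(v_i)} h(v_j)
	= \sum_{v_iv_j \in E(F)} \bigl( h(v_i) + h(v_j) \bigr),
\]
where the last equality just records that each (unordered) edge $v_iv_j$ of $F$ contributes $h(v_j)$ to the term $i$ and $h(v_i)$ to the term $j$. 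Now swap the order of summation the other way: the total contribution of a fixed vertex class $V_j$ is $h(v_j)$ times the number of edges of $F$ incident to $v_j$, which is exactly $\deg_F(v_j) = k$ since $F$ is $k$-regular. Hence the double sum equals $\sum_{j=1}^r k\, h(v_j) = k n$, which is \eqref{eq:1734}.

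For part \ref{it:1641b}, assume the blow-up is proper, so every vertex class is non-empty and \eqref{eq:neighbours} applies to each $V_i$, giving $\delta(H) \le |N(V_i)| \le \Delta(H)$ for all $i$. Summing these $r$ inequalities and dividing by $r$ yields
\[
	\delta(H) \le \frac1r \sum_{i=1}^r |N(V_i)| \le \Delta(H),
\]
and the middle quantity is $kn/r$ by \ref{it:1641a}. This is precisely the claimed chain $\delta(H) \le kn/r \le \Delta(H)$.

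There is no real obstacle here; the only point requiring a moment's care is the bookkeeping in the double sum for \ref{it:1641a}, namely checking that each edge of $F$ is counted with the correct weight and that properness (non-emptiness of the classes) is genuinely needed only in \ref{it:1641b} — in \ref{it:1641a} empty classes are harmless since they contribute $0$ on both sides. One should also note that $F$ being $k$-regular forces $\delta(H) \le \Delta(H)$ to be compatible with the averaged value, which is automatic, so no separate verification is needed.
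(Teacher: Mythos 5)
Your proof is correct and follows essentially the same double-counting argument as the paper: both swap the order of summation in $\sum_{i}\sum_{v_j\in \Nn(v_i)}|V_j|$ to obtain $\sum_j |\Nn(v_j)|\,|V_j|=kn$, and both derive part~\ref{it:1641b} by summing the inequalities $\delta(H)\le|N(V_i)|\le\Delta(H)$ over all classes. Your intermediate step grouping terms by edges of $F$ is just a slightly more verbose way of writing the same exchange of summation.
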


\begin{proof}
	The double counting argument 
	\[
		\sum_{i=1}^r |N(V_i)|
		=
		\sum_{i=1}^r \sum_{v_j\in \Nn(v_i)} |V_j|
		=
		\sum_{j=1}^r |\Nn(v_j)|\,|V_j|
		=
		k\sum _{j=1}^r |V_j|
		=
		kn
	\]
	establishes part~\ref{it:1641a}. If $H$ is a proper blow-up, then~\eqref{eq:neighbours} 
	yields 
	\[
		r\delta(H) \le \sum_{i=1}^r |N(V_i)| \le r\Delta(H)
	\]
	and part~\ref{it:1641b} follows. 
\end{proof}	

The second part of the foregoing fact has the following useful consequence.
  
\begin{lemma}\label{l:kregular}
	Let $d, k\ge 2$ be two integers and let $J$ be a graph. Suppose that $J$ has a $k$-regular
	proper blow-up $F$ on $3k-1$ vertices and that $H$ denotes a further proper blow-up of $J$
	having $n$ vertices. If 
	\begin{equation}\label{eq:1710}
		\frac{d+1}{3d+2}n < \delta(H)
		\quad \text{ and } \quad
		\Delta(H) < \frac{d-1}{3d-4}n\,,
	\end{equation}
	then $k=d$.
\end{lemma}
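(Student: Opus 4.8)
The plan is to read off from the regularity of $F$ an arithmetic identity for the blow-up weights, to double-count the degrees of $H$ against these weights so as to sandwich $\delta(H)$ and $\Delta(H)$ around $\tfrac{kn}{3k-1}$, and then to conclude by monotonicity of $x\mapsto x/(3x-1)$.

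First I would fix the weight functions. Write $F=J(f)$ and $H=J(h)$, where $f,h\colon V(J)\to\ZZ_{\ge 1}$ are everywhere positive (both blow-ups being proper), with $\sum_{u\in V(J)}f(u)=3k-1$ and $\sum_{u\in V(J)}h(u)=n$. In the blow-up $F$ a vertex lying in the class of $u\in V(J)$ is adjacent exactly to the classes of the neighbours of $u$, so it has degree $\sum_{u'\in N_J(u)}f(u')$; since $F$ is $k$-regular and $f(u)\ge 1$ for every $u$, this yields
\begin{equation*}
	\sum_{u'\in N_J(u)}f(u')=k\qquad\text{for every }u\in V(J)\,.
\end{equation*}
Likewise, in $H$ every vertex in the class of $u$ has degree $d_u:=\sum_{u'\in N_J(u)}h(u')$, so that $\delta(H)\le d_u\le\Delta(H)$ for all $u\in V(J)$.

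Next comes the double counting. Summing the degrees of $H$ weighted by $f$ and interchanging the order of summation over ordered edges of $J$,
\begin{equation*}
	\sum_{u\in V(J)}f(u)\,d_u
	=\sum_{u\in V(J)}\ \sum_{u'\in N_J(u)}f(u)\,h(u')
	=\sum_{u'\in V(J)}h(u')\sum_{u\in N_J(u')}f(u)
	=k\sum_{u'\in V(J)}h(u')=kn\,.
\end{equation*}
Since the weights $f(u)$ are positive and add up to $3k-1$, the number $\tfrac{kn}{3k-1}$ is a weighted average of the $d_u$, and therefore
\begin{equation*}
	\delta(H)\le\frac{kn}{3k-1}\le\Delta(H)\,.
\end{equation*}
Equivalently, one may pass to a balanced blow-up $H(p)$ of $H$ with $p$ large; this is a \emph{proper} blow-up of the $k$-regular graph $F$, so the second part of Fact~\ref{fact:1641} applies and then one divides by $p$. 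Note that $H$ itself need not be a blow-up of $F$, which is the one point where a little care is needed.

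Finally I would combine this sandwich with~\eqref{eq:1710}. From $\tfrac{d+1}{3d+2}n<\delta(H)\le\tfrac{kn}{3k-1}$ and $\tfrac{kn}{3k-1}\le\Delta(H)<\tfrac{d-1}{3d-4}n$, dividing by $n$ and writing $\tfrac{d+1}{3d+2}=\tfrac{d+1}{3(d+1)-1}$ and $\tfrac{d-1}{3d-4}=\tfrac{d-1}{3(d-1)-1}$, one gets
\begin{equation*}
	\frac{d+1}{3(d+1)-1}<\frac{k}{3k-1}<\frac{d-1}{3(d-1)-1}\,.
\end{equation*}
The function $x\mapsto x/(3x-1)$ is strictly decreasing on $(1/3,\infty)$, and $d-1\ge 1$, $k\ge 2$, $d+1\ge 3$ all lie in this range; hence the left inequality forces $k<d+1$ and the right one forces $k>d-1$. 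As $k$ and $d$ are integers, $k=d$. The only genuine subtlety in the whole argument is the one flagged above — $H$ is a blow-up of $J$ but not literally of $F$, so the estimate $\delta(H)\le\tfrac{kn}{3k-1}\le\Delta(H)$ is obtained by the explicit weighted average (or the balanced-blow-up trick) rather than by quoting Fact~\ref{fact:1641} directly; beyond that it is bookkeeping, provided one remembers to use \emph{both} halves of~\eqref{eq:1710}, since either one alone only bounds $k$ from a single side.
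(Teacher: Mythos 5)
Your proof is correct and in essence follows the paper's argument: establish the sandwich $\delta(H)\le kn/(3k-1)\le\Delta(H)$ and then pin down $k=d$ by the monotonicity of $x\mapsto x/(3x-1)$ (equivalently, the paper's reciprocal manipulation). The one small difference is that you obtain the sandwich by a direct weighted double-count against the weight function $f$ of $F$, whereas the paper first passes to a large balanced blow-up $H(p)$ so as to literally invoke Fact~\ref{fact:1641}\ref{it:1641b}; you correctly flag this subtlety and offer both routes, so the two proofs amount to the same thing.
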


\begin{proof}
	Notice that every balanced blow-up $H(p)$ of $H$ is a proper blow-up of $J$ as well. 
	By applying this observation to a sufficiently large integer $p$ we learn that we may 
	assume, without loss of generality, that $H$ is a proper blow-up of $F$. 
	Now Fact~\ref{fact:1641}\ref{it:1641b} reveals  
	\[
		\delta(H)\le \frac{kn}{3k-1} \le \Delta(H)\,,
	\]
	which together with~\eqref{eq:1710} implies
	\[
		\frac{d+1}{3d+2} < \frac{k}{3k-1} < \frac{d-1}{3d-4}\,.
	\]
	Taking reciprocals we obtain 
	\[
		3-\frac{1}{d-1} < 3-\frac 1k < 3-\frac 1{d+1}\,,
	\]
	i.e., $d-1<k<d+1$. 
\end{proof}

The main result of this section is the following upper bound on the number of edges 
of a blow-up. 

\begin{lemma}\label{edges}
	Suppose that $F$ is a $k$-regular graph on $r$ vertices
	and  that $H$ is an $n$-vertex blow-up of $F$ all of whose vertex classes have at 
	least the size $x$. If we have $|N(Z)|\le s$ for every vertex class $Z$ of $H$, then
	\begin{equation}\label{eq:lowx}
		e(H) \le \frac{ns}2 -\frac{x(rs- kn)}2\,.
	\end{equation}
	Moreover, if $rs\neq kn$ and~\eqref{eq:lowx} holds with equality, then  
	\begin{enumerate}
		\item[$\bullet$] at least one vertex class of $H$ has size $x$; 
		\item[$\bullet$] every vertex class $V_i$ of $H$ with $|V_i| > x$
			satisfies $|N(V_i)|=s$.
	\end{enumerate}
\end{lemma}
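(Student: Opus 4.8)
The plan is to compute $e(H)$ by summing over vertex classes. Write $V(F)=\{v_1,\dots,v_r\}$ and let $V_1,\dots,V_r$ be the vertex classes of $H$, with $n_i=|V_i|$, so that $\sum_i n_i=n$ and each $n_i\ge x$. Since each edge of $H$ lies between some $V_i$ and $V_j$ with $v_iv_j\in E(F)$, we have
\[
2e(H)=\sum_{i=1}^r n_i\,|N(V_i)|\,.
\]
The hypothesis $|N(V_i)|\le s$ for every $i$ would immediately give $2e(H)\le sn$, but that is weaker than \eqref{eq:lowx} when $rs>kn$; the point is to exploit the slack coming from Fact~\ref{fact:1641}\ref{it:1641a}, namely $\sum_i|N(V_i)|=kn$.

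The key manoeuvre is to split each $n_i$ as $x+(n_i-x)$ with $n_i-x\ge 0$. Then
\[
2e(H)=\sum_{i=1}^r x\,|N(V_i)|+\sum_{i=1}^r (n_i-x)\,|N(V_i)|
= x\cdot kn+\sum_{i=1}^r (n_i-x)\,|N(V_i)|\,,
\]
using \eqref{eq:1734}. In the remaining sum every factor $n_i-x$ is non-negative, so bounding $|N(V_i)|\le s$ gives $\sum_i(n_i-x)|N(V_i)|\le s\sum_i(n_i-x)=s(n-rx)$. Hence $2e(H)\le xkn+s(n-rx)=ns-x(rs-kn)$, which is exactly twice \eqref{eq:lowx}. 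There is no real obstacle here; the only thing to be careful about is that the decomposition is valid precisely because $n_i\ge x$ for all $i$, which is the standing hypothesis.

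For the equality discussion, assume $rs\ne kn$ and that \eqref{eq:lowx} holds with equality. Then every inequality used above is tight. First, if we had $n_i>x$ for \emph{all} $i$, then the bound $|N(V_i)|\le s$ used with the positive weights $n_i-x>0$ must be tight for every $i$, giving $|N(V_i)|=s$ for all $i$; summing and using \eqref{eq:1734} would yield $rs=\sum_i|N(V_i)|=kn$, contradicting $rs\ne kn$. Therefore at least one vertex class has size exactly $x$, which is the first bullet. Second, for the third bullet: if some class $V_i$ has $n_i>x$, then in the tight inequality $\sum_j(n_j-x)|N(V_j)|=s\sum_j(n_j-x)$ the $j=i$ term contributes a strictly positive weight $n_i-x$, so we must have $|N(V_i)|=s$ for that index. (Equivalently: equality in $\sum_j(n_j-x)(s-|N(V_j)|)=0$ with all summands $\ge 0$ forces each summand to vanish, so $|N(V_j)|=s$ whenever $n_j>x$.) This establishes both bullet points and completes the proof.
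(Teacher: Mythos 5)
Your argument is correct and essentially reproduces the paper's proof: the same split $n_i = x + (n_i - x)$, the same use of Fact~\ref{fact:1641}\ref{it:1641a} to evaluate $\sum_i|N(V_i)|$, and the same tightness analysis for the equality case (in which you say ``third bullet'' where you evidently mean the second, a harmless slip). No substantive differences.
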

\begin{proof} 
	As usual, we write the vertex set of $F$ in the form $V(F)=\{v_1, \ldots, v_r\}$. 
	Setting $x_i=|V_i|$ for $i\in [r]$ we have 
	\begin{align*}
		2e(H) 
		&= 
		\sum_{v\in V(H)}\deg_H(v)
		=
		\sum_{i=1}^r|V_i|\cdot|N(V_i)| \\
		&= 
		\sum_{i=1}^r x\cdot|N(V_i)|+\sum_{i=1}^r(x_i-x)\cdot|N(V_i)| \\
		&\le 
		x\sum_{i=1}^r|N(V_i)|+\sum_{i=1}^r(x_i-x)s
		\overset{\eqref{eq:1734}}{=}
		xkn+s(n-xr)
		=
		ns-x(rs-kn)\,, 
	\end{align*}	
	which proves the desired upper bound on $e(H)$.
	
	Suppose from now on that this estimate holds with equality and that $rs\ne kn$. 
	This means that $(x_i-x)|N(V_i)|=(x_i-x)s$ holds for every $i\in [r]$,
	or in other words that $x_i>x$ implies $|N(V_i)|=s$. This proves the second bullet.
	Now, if the first bullet fails, we have $|N(V_i)|=s$ for every $i\in [r]$ 
	and~\eqref{eq:1734} yields the contradiction $rs=kn$.
\end{proof}

Let us notice the following consequence of the above result.

\begin{cor}\label{blowupedges}
	Suppose that $k\ge 2$ is a natural number, $F$ is a $k$-regular graph 
	on~$3k-1$ vertices and $H$
	is an $n$-vertex blow-up of $F$. If $|N(Z)|\le s$ and $|Z|\ge (k-1)n - (3k-4)s$
	hold for every vertex class $Z$ of $H$, then 
	\[
		e(H)\le g_k(n,s)\,.
	\]
	Moreover, if $e(H)=g_k(n,s)$, then 
	\begin{enumerate}[label=\rmlabel]
	\item\label{it:2140a} $kn/(3k-1) \le s \le (k-1)n/(3k-4)$;
	\item\label{it:2140b} $H$ contains a vertex class of size $(k-1)n - (3k-4)s$;
	\item\label{it:2140c} the neighbourhood of each vertex class of $H$ containing 
		more than $(k-1)n - (3k-4)s$ vertices has size $s$.
	\end{enumerate}			
\end{cor}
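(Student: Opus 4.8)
The plan is to deduce Corollary~\ref{blowupedges} from Lemma~\ref{edges} by specialising $r=3k-1$ and $x=(k-1)n-(3k-4)s$, and then simplifying the resulting quadratic in $n$ and $s$ until it matches $g_k(n,s)$. First I would set $x=(k-1)n-(3k-4)s$ and observe that the hypotheses of Lemma~\ref{edges} are met: every vertex class has size at least $x$ by assumption, and $|N(Z)|\le s$ for every vertex class~$Z$. Applying~\eqref{eq:lowx} with $r=3k-1$ gives
\[
	e(H)\le \frac{ns}{2}-\frac{x\bigl((3k-1)s-kn\bigr)}{2}.
\]
Substituting $x=(k-1)n-(3k-4)s$ and expanding, this right-hand side becomes a quadratic form in $n$ and $s$; a direct computation shows it equals $k(k-1)n^2/2-k(3k-4)ns+(3k-4)(3k-1)s^2/2=g_k(n,s)$. (One convenient check: write $\tfrac{ns}{2}-\tfrac{x((3k-1)s-kn)}{2}=\tfrac12\bigl[ns-x(3k-1)s+xkn\bigr]$ and note $ns-x(3k-1)s = s(n-(3k-1)x)$ while $n-(3k-1)x=n-(3k-1)(k-1)n+(3k-1)(3k-4)s$, after which collecting the $n^2$, $ns$, $s^2$ coefficients yields exactly~\eqref{eq2}.) This establishes the bound $e(H)\le g_k(n,s)$.

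For the equality case, suppose $e(H)=g_k(n,s)$. I would first handle the degenerate possibility $rs=kn$, i.e.\ $(3k-1)s=kn$; in that case $s=kn/(3k-1)$, so $x=(k-1)n-(3k-4)s=(k-1)n-(3k-4)\cdot\tfrac{kn}{3k-1}=\tfrac{n}{3k-1}$, and one checks that~\ref{it:2140a} holds (both inequalities, the right one because $k/(3k-1)\le(k-1)/(3k-4)$ for $k\ge 2$), that every class has size exactly $x=n/(3k-1)$ since the average class size is $n/(3k-1)=x$ and no class is smaller, giving~\ref{it:2140b}, and~\ref{it:2140c} is vacuous. When $rs\ne kn$, Lemma~\ref{edges} applies directly: its first bullet gives a vertex class of size $x=(k-1)n-(3k-4)s$, which is~\ref{it:2140b}, and its second bullet gives~\ref{it:2140c}. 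It remains to derive~\ref{it:2140a}. Since a vertex class of size $x$ exists and vertex classes must have nonnegative size, $x=(k-1)n-(3k-4)s\ge 0$ forces $s\le (k-1)n/(3k-4)$. For the lower bound $s\ge kn/(3k-1)$, I would argue that if $s<kn/(3k-1)$ then $rs<kn$, so $rs-kn<0$ and hence from~\eqref{eq:lowx} larger $x$ gives a \emph{larger} upper bound; but $e(H)$ is also bounded by $\tfrac12\sum|V_i||N(V_i)|$ with $|N(V_i)|\ge\delta(H)$ and the classes averaging to $n/(3k-1)$ in size — more cleanly, by Fact~\ref{fact:1641}\ref{it:1641a}, $\sum_i|N(V_i)|=kn$, so $2e(H)=\sum_i|V_i|\,|N(V_i)|$ with $|V_i|\ge x$ and $\sum_i|N(V_i)|=kn$, giving $2e(H)\ge$ a quantity one can compare against $2g_k(n,s)$; pushing the inequality $x((3k-1)s-kn)\le 0$ through shows $e(H)\le ns/2$ is not tight enough, so I would instead note that equality in~\eqref{eq:lowx} together with $rs<kn$ and the fact that the bound is an \emph{increasing} function of $x$ (for fixed class sizes) means that $H$ being a genuine blow-up with $e(H)=g_k(n,s)$ is impossible unless $rs\ge kn$, i.e.\ $s\ge kn/(3k-1)$.

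The main obstacle I anticipate is the last point: cleanly ruling out $s<kn/(3k-1)$ in the equality case. The slick way is to observe that when $rs-kn<0$, the right side of~\eqref{eq:lowx} is $\tfrac{ns}{2}+\tfrac{x(kn-rs)}{2}$, which strictly increases in $x$; since $g_k(n,s)$ equals this expression evaluated at $x=(k-1)n-(3k-4)s$, and since every class has size $\ge x$, one gets (after reworking the double-counting in the proof of Lemma~\ref{edges} without the substitution $x_i\ge x\Rightarrow$ drop) that actually every class has size \emph{exactly} $x$ — but then $n=rx=(3k-1)x$, while also $\sum|N(V_i)|=kn$ forces the average neighbourhood size to be $kn/(3k-1)>s$, contradicting $|N(Z)|\le s$. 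Hence $s\ge kn/(3k-1)$, completing~\ref{it:2140a}. I would present this contradiction argument carefully, since it is the only non-mechanical part; everything else is the substitution and the algebraic identity $\tfrac{ns}{2}-\tfrac{x((3k-1)s-kn)}{2}=g_k(n,s)$.
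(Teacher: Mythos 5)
Your approach matches the paper's up to the equality analysis of clause~\ref{it:2140a}, and most of what you wrote is right: the application of Lemma~\ref{edges} with $r=3k-1$ and $x=(k-1)n-(3k-4)s$, the algebraic identity $\tfrac12\bigl[ns-x\bigl((3k-1)s-kn\bigr)\bigr]=g_k(n,s)$, the degenerate case $(3k-1)s=kn$ (all classes have size exactly $n/(3k-1)$, so~\ref{it:2140b} holds and~\ref{it:2140c} is vacuous), and the deduction of~\ref{it:2140b} and~\ref{it:2140c} from the moreover-part of Lemma~\ref{edges} when $(3k-1)s\ne kn$. Your derivation of the upper bound $s\le(k-1)n/(3k-4)$ from~\ref{it:2140b} (a vertex class of size $x\ge 0$ exists) is valid and is a small but genuine variation on the paper, which instead establishes all of~\ref{it:2140a} at once.

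Where your argument has a real gap is in the lower bound $s\ge kn/(3k-1)$. The intermediate claim that equality in~\eqref{eq:lowx} together with $rs<kn$ forces every class to have size exactly $x$ does not follow from the equality analysis of Lemma~\ref{edges}: the only inequality in that proof is $(x_i-x)|N(V_i)|\le(x_i-x)s$, and tightness there only says $x_i>x\Rightarrow|N(V_i)|=s$; it places no cap on any $x_i$. So you cannot conclude $n=(3k-1)x$ this way, and the contradiction you build on it is not justified. The clean route, which is also the paper's, is to observe that whenever~\ref{it:2140a} fails, the product $\bigl((k-1)n-(3k-4)s\bigr)\bigl((3k-1)s-kn\bigr)$ is strictly negative, so $g_k(n,s)>ns/2$; but the trivial bound $2e(H)=\sum_i|V_i|\,|N(V_i)|\le s\sum_i|V_i|=ns$ gives $e(H)\le ns/2$, contradicting $e(H)=g_k(n,s)$. (An even more direct argument for the lower bound alone: Fact~\ref{fact:1641}\ref{it:1641a} gives $kn=\sum_i|N(V_i)|\le(3k-1)s$, which already forces $s\ge kn/(3k-1)$ from the hypotheses without any appeal to $e(H)=g_k(n,s)$.) With either of these replacements, the rest of your proof is sound.
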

 
\begin{proof}
	In order to prove the desired upper bound on $e(H)$ we remark that Lemma~\ref{edges} 
	applied to $r=3k-1$ and $x=(k-1)n-(3k-4)s$ yields  
	\[
		e(H)
		\le 
		\tfrac12\bigl[ns-\bigl((k-1)n -(3k-4)s\bigr)\bigl((3k-1)s - kn\bigr)\bigr]
		=
		g_k(n,s)\,. 
	\]
	Let us now study the case that equality holds in this estimate. If clause~\ref{it:2140a} 
	failed, then the trivial upper bound $e(H)\le ns/2$ would contradict $e(H)=g_k(n, s)$.
	
	The remaining two clauses follow from the moreover-part in Lemma~\ref{edges} provided
	that its assumption $(3k-1)s\ne kn$ holds. So it remains to deal with the 
	case $s=kn/(3k-1)$. Now $(k-1)n - (3k-4)s = n/(3k-1)$ is at the same time the average 
	size of the vertex classes of $H$ and a lower bound on the sizes of these vertex classes.
	In other words, $H$ is the balanced blow-up $F\bigl(n/(3k-1)\bigr)$ 
	and~\ref{it:2140b},~\ref{it:2140c} hold trivially. 
\end{proof}

\section{Andr\'asfai graphs and their blow-ups}

The characterisation of triangle-free graphs on $n$ vertices whose minimum degree is larger 
than $n/3$ due to Brandt and Thomass\'e~\cite{BT} involves two explicit families 
of such graphs, called {\it Andr\'asfai graphs} and {\it Vega graphs} (see 
Theorem~\ref{th:dense} below). In this section we study the first of these graph sequences,
which has been introduced by Andr\'asfai in~\cite{A} and has been rediscovered several 
times throughout the years. 

One way to construct a $k$-regular triangle-free graph is to take an Abelian 
group $\GG$, a symmetric sum-free subset $S$ of size $k$, and to form the 
Cayley graph $\textrm{Cayley}(\GG; S)$. A natural (and, as we shall soon argue, 
generic) example occurs when we take the cyclic group $\ZZ_{3k-1}$ and its 
sum-free subset $S_k=\{k, k+1, \ldots, 2k-1\}$. 
The \emph{Andr\'asfai graph~$\G_k$} is defined to be the corresponding Cayley graph 
$\textrm{Cayley}(\ZZ_{3k-1}; S_k)$. Describing the same graph in more concrete terms,
we set 
\[
	V(\G_k)=\{v_0, v_1, \dots, v_{3k-2}\}
\]
and declare the adjacencies in $\Gamma_k$ by
\begin{equation}\label{eq:1200}
	v_iv_j\in E(\G_k)
	\,\,\, \Longleftrightarrow \,\,\,	
	k \le |i-j|\le 2k-1
\end{equation}
for all vertices $v_i, v_j\in V(\G_k)$.
For instance, $\G_1=K_2$, $\G_2 = C_5$, and Figure~\ref{fig:ag} shows some further 
Andr\'asfai graphs.
\begin{figure}[ht]
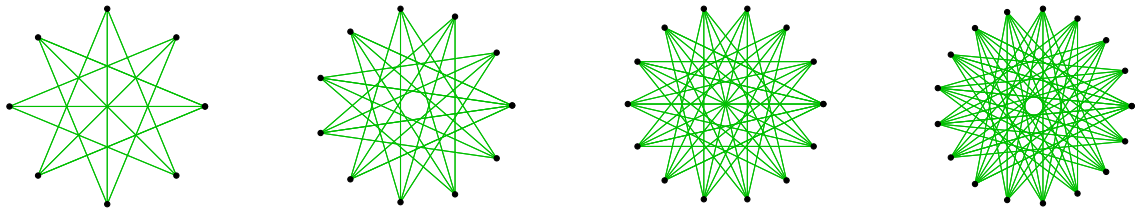

	\centering
	\begin{multicols}{4}
		\ag{3} \\ \ag{4} \\ \ag{5} \\ \ag{6}
	\end{multicols}
	\caption{Andr\'asfai graphs  
		$\G_3$, $\G_4$,  $\G_5$, and $\G_6$.
	}
	\label{fig:ag}
\end{figure}

Let us remark that given an Abelian group $\GG$ containing a sum-free set $S$ 
with $|S|>|\GG|/3$ one can show by means of Kneser's theorem (see~\cite{LS}) that 
for some positive integer~$k$ there 
exists a homomorphism $\varphi\colon\GG\longrightarrow \ZZ_{3k-1}$
satisfying $\varphi[S]\subseteq S_k$. 
Such a group homomorphism~$\varphi$ induces a graph homomorphism 
$\varphi_\star\colon \textrm{Cayley}(\GG;S)\longrightarrow \Gamma_k$,
or, in other words, it indicates that $\textrm{Cayley}(\GG;S)$ is contained in a sufficiently 
large blow-up of $\Gamma_k$. These considerations reveal that balanced blow-ups of 
Andr\'asfai graphs are universal in the class of triangle-free Cayley graphs whose 
density is larger than $1/3$. Somewhat relatedly, a finite graph is a subgraph of 
a blow-up of an Andr\'asfai graph if and only if it is isomorphic to a subgraph of the 
infinite triangle-free Cayley graph $\textrm{Cayley}\bigl(\RR/\ZZ;(1/3, 2/3)+\ZZ\bigr)$,
see~\cite{BMPP2018}*{Lemma~2.1}.
We proceed with three well known, useful properties of Andr\'asfai graphs.

\begin{fact}\label{fact:A1} 
	Let $k\ge 2$ be an integer.
	\begin{enumerate}[label=\rmlabel]
		\item\label{it:1112a} We have $\G_{k-1}\subseteq \G_{k}$. 
			Conversely, if we remove a vertex from $\G_{k}$, then the resulting 
			graph is a subgraph of a proper blow-up of $\G_{k-1}$. Consequently, 
			every subgraph of~$\Gamma_k$ is a subgraph of a proper blow-up of $\Gamma_\ell$ 
			for some $\ell\le k$.
		\item\label{it:1112b} The chromatic number of $\G_k$ is $3$. 
		\item\label{it:1112c} Every independent set in $\G_k$ is contained 
			in the neighbourhood of some vertex of $\G_k$. In particular, $\alpha(\G_k)=k$.
	\end{enumerate}
\end{fact}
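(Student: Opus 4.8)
The plan is to prove the three parts of Fact~\ref{fact:A1} by working directly with the concrete description of $\G_k$ via~\eqref{eq:1200}, taking $V(\G_k)=\{v_0,\dots,v_{3k-2}\}$ with indices in $\ZZ_{3k-1}$ throughout.

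For part~\ref{it:1112a}, to see $\G_{k-1}\subseteq\G_k$ I would exhibit an explicit injection of vertices: map $v_i\in V(\G_{k-1})$ (indices mod $3k-4$) to a carefully chosen subset of $V(\G_k)$ so that the defining inequality $k-1\le|i-j|\le 2k-2$ for $\G_{k-1}$ is transported to $k\le|i'-j'|\le 2k-1$ for $\G_k$; concretely one can use the vertices $v_0,\dots,v_{3k-5}$ of $\G_k$ and check that the relevant differences land in the right window (this is essentially the statement that $\textrm{Cayley}(\ZZ_{3k-4};S_{k-1})$ embeds into $\textrm{Cayley}(\ZZ_{3k-1};S_k)$, which one can verify by a short computation, being mildly careful about differences that ``wrap around''). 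For the converse, deleting a vertex, say $v_{3k-2}$, from $\G_k$ leaves the vertices $v_0,\dots,v_{3k-3}$; I would partition this set into the classes of a blow-up of $\G_{k-1}$ by grouping indices appropriately (pairing up two ``nearby'' indices of $\G_k$ into one class of $\G_{k-1}$) and check that two indices land in adjacent classes of $\G_{k-1}$ exactly when the corresponding vertices of $\G_k$ are adjacent, using~\eqref{eq:1200}. The final ``consequently'' then follows by induction on $k$: any subgraph of $\G_k$ either avoids some vertex, hence sits inside a blow-up of $\G_{k-1}$, and we recurse (blow-ups of blow-ups being blow-ups, as noted in Section~\ref{sec:1431}), or uses all vertices, in which case it is a subgraph of $\G_k$ itself with $\ell=k$.

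For part~\ref{it:1112b}, the lower bound $\chi(\G_k)\ge 3$ is immediate since $\G_k$ contains an odd cycle (for instance $\G_2=C_5\subseteq\G_k$ by part~\ref{it:1112a}, or directly the cycle $v_0v_kv_{2k}v_{3k-1+1}\dots$ — more simply, $\G_k$ is not bipartite because it is vertex-transitive and contains an odd closed walk). For the upper bound I would write down a proper $3$-colouring explicitly: colour $v_i$ by $\lfloor i/k\rfloor$ roughly, i.e.\ split $\{0,1,\dots,3k-2\}$ into three intervals of lengths $k$, $k$, $k-1$, and verify via~\eqref{eq:1200} that no edge is monochromatic, since an edge joins indices at distance between $k$ and $2k-1$, which cannot both lie in a common block of length $\le k$ (again the wrap-around case needs a line of checking).

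For part~\ref{it:1112c}, I would first identify the maximal independent sets. Given an independent set $I$, no two of its indices differ (mod $3k-1$) by something in $\{k,\dots,2k-1\}$, equivalently all pairwise differences lie in $\{0,1,\dots,k-1\}\cup\{2k,\dots,3k-2\}=\{-(k-1),\dots,k-1\}\bmod(3k-1)$; so $I$ is contained in an ``arc'' of $k$ consecutive indices, say $\{v_{a},v_{a+1},\dots,v_{a+k-1}\}$. It then suffices to observe that such an arc is exactly the neighbourhood $N(v_{a-k})$ of the vertex $v_{a-k}$ (equivalently $v_{a+2k-1}$), since by~\eqref{eq:1200} the neighbours of a fixed vertex are precisely the $k$ vertices whose index is ``$k$ to $2k-1$ ahead'' — and those form $k$ consecutive indices. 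Hence every independent set lies in some $N(v_j)$, and since $|N(v_j)|=k$ and each $N(v_j)$ is itself independent (as $\G_k$ is triangle-free), we get $\alpha(\G_k)=k$. The main point requiring care throughout all three parts is the bookkeeping of cyclic differences modulo $3k-1$ — making sure the ``distance between $k$ and $2k-1$'' condition is handled symmetrically and that the wrap-around does not create unexpected edges or non-edges; everything else is routine.
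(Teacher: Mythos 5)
Your plan for parts \mref{it:1112b} and \mref{it:1112c} is sound and closely mirrors the paper's proof (the explicit three-interval colouring for \mref{it:1112b}, and for \mref{it:1112c} the observation that an independent set must lie in an arc of $k$ consecutive indices, which is exactly some $\Nn(v_j)$; the paper instead normalises $v_k\in S$ by symmetry and bounds the index range, but your arc-covering argument, including the claim that pairwise cyclic distances $\le k-1$ force the set into an arc of length $k$, checks out since $3k-1>2(2k-2)-1$ is not needed and the relevant inequality $d_m-d_1\le 2k-2<2k$ rules out the wrap-around case). The ``consequently'' in \mref{it:1112a} by induction, using that a blow-up of a blow-up is a blow-up, is also fine.

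However, the concrete embedding you propose for $\G_{k-1}\subseteq\G_k$ is wrong. You suggest using the vertices $v_0,\dots,v_{3k-5}$ of $\G_k$. For $k=3$ the induced subgraph of $\G_3$ on $\{v_0,\dots,v_4\}$ (adjacency $3\le|i-j|\le5$ with indices in $\ZZ_8$) has only the three edges $v_0v_3$, $v_0v_4$, $v_1v_4$, so it cannot contain $\G_2=C_5$, which has five edges; no injection of $V(\G_2)$ into $\{v_0,\dots,v_4\}$ can be a graph embedding. The problem is that three \emph{consecutive} vertices of $\G_k$ is the wrong set to delete: the correct choice, used in the paper, is to remove the spread-out triple $\{v_0,v_k,v_{2k}\}$, after which the remaining $3k-4$ vertices — three arcs of length $k-1$ each — carry a copy of $\G_{k-1}$ under the obvious consecutive relabelling. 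Your sketch for the converse (delete one vertex, then merge two pairs of vertices into single blow-up classes) is the right idea and matches the paper (which deletes $v_k$ and merges $\{v_0,v_1\}$ and $\{v_{2k-1},v_{2k}\}$ using $\Nn(v_0)\subseteq\Nn(v_1)$ and $\Nn(v_{2k})\subseteq\Nn(v_{2k-1})$ in $\G_k-v_k$), but you should identify the merge pairs explicitly rather than leaving them to ``grouping indices appropriately''.
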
 

\begin{proof}
	The first part of~\ref{it:1112a} follows from $\G_{k-1}\cong \G_k - \{v_0, v_k, v_{2k}\}$.
	Furthermore, the graph $\G_k^-=\G_k-\{v_{k}\}$ 
	satisfies $\Nn(v_0, \G_k^-)\subseteq \Nn(v_1, \G_k^-)$
	and $\Nn(v_{2k}, \G_k^-)\subseteq \Nn(v_{2k-1}, \G_k^-)$,
	which proves~$\G_k^-$ to be a subgraph of the blow-up 
	of $\G_k - \{v_0, v_k, v_{2k}\}$ obtained by doubling the vertices~$v_1$
	and~$v_{2k-1}$. This establishes the second part of~\ref{it:1112a} and the 
	last part follows inductively.  
	
	Proceeding with~\ref{it:1112b} we observe that, due to~\ref{it:1112a}, 
	$\G_k$ contains a pentagon~$\G_2$ as a subgraph, whence $\chi(\G_k)\ge \chi(\G_2)=3$. 
	To verify the reverse inequality, we partition $V(\G_k)$ into the three independent sets 
	$\{v_0, v_1, \ldots, v_{k-1}\}$, $\{v_k, v_{k+1},\ldots, v_{2k-1}\}$, and 
	$\{v_{2k}, v_{2k+1}, \ldots, v_{3k-2}\}$.

	Finally, let $S\subseteq V(\G_k)$ be a nonempty independent set we want to cover by 
	the neighbourhood of an appropriate vertex. By symmetry we may suppose that $v_k\in S$, 
	which due to~\eqref{eq:1200} entails $S\subseteq \{v_1, v_2, \ldots, v_{2k-1}\}$. 
	Let $i, j\in [2k-1]$ be the smallest and largest index with $v_i, v_j\in S$. 
	Now $S\subseteq \{v_i, \ldots, v_j\}$, a further application of~\eqref{eq:1200} 
	shows $j-i \le k-1$, and altogether we have $S\subseteq \Nn(v_{j+k})$.
\end{proof}	

Let us consider for $k\ge 2$ and $n\ge s\ge 1$ an $n$-vertex blow-up $H$ of the 
Andr\'asfai graph~$\G_k$ with $\al(H)\le s$. According to the notation of 
Section~\ref{sec:1431}, this blow-up comes together with a fixed partition 
\[
	V(H)=V_0\dcup V_1\dcup \ldots \dcup V_{3k-2}
\]
of its vertex set. Clearly, since $\G_k$ is triangle-free, so is $H$. Therefore the neighbourhood of each vertex class of~$H$ is an independent set, which proves
\begin{equation}\label{eq:inde}
	|N(V_i)|=|V_{i+k}|+\ldots+|V_{i+2k-1}| \le s 
\end{equation}
for every $i\in\ZZ_{3k-1}$.	We use this inequality to bound the sizes of the vertex classes of $H$. 

\begin{fact}\label{Vi}
	For every $i\in\ZZ_{3k-1}$ we have
	\[
		(k-1)n - (3k-4)s \le |V_i|\le 3s-n\,.
	\]
\end{fact}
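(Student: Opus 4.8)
The plan is to abbreviate $x_j:=|V_j|$ for $j\in\ZZ_{3k-1}$ and to extract $x_i$ from suitable instances of~\eqref{eq:inde}. Recall that $N(V_j)=V_{j+k}\cup V_{j+k+1}\cup\dots\cup V_{j+2k-1}$ is the union of $k$ cyclically consecutive vertex classes, that as $j$ ranges over $\ZZ_{3k-1}$ this runs through all such ``windows'' of length $k$, and that $|N(V_j)|\le s$ holds for every $j$ by~\eqref{eq:inde}. Throughout, $\sum_{j\in\ZZ_{3k-1}}x_j=n$.

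Fix $i\in\ZZ_{3k-1}$. The combinatorial core of the argument is that the three windows $N(V_{i-k})$, $N(V_i)$, $N(V_{i+k})$ --- which, reading indices modulo $3k-1$, equal $V_i\cup\dots\cup V_{i+k-1}$, then $V_{i+k}\cup\dots\cup V_{i+2k-1}$, then $V_{i+2k}\cup\dots\cup V_{i+3k-2}\cup V_i$ --- between them cover every vertex class, each class other than $V_i$ exactly once and $V_i$ exactly twice; this is where the arithmetic ``$3k-1$ versus $3k$'' enters, through $i+3k-1\equiv i$. (For $k\ge 2$ the indices $i-k$, $i$, $i+k$ are pairwise distinct modulo $3k-1$, so these really are three distinct windows.) Adding up sizes therefore gives
\[
	|N(V_{i-k})|+|N(V_i)|+|N(V_{i+k})|=2x_i+\sum_{j\ne i}x_j=n+x_i\,.
\]

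The upper bound is now immediate: bounding each of the three summands on the left by $s$ via~\eqref{eq:inde} yields $n+x_i\le 3s$, i.e.\ $x_i\le 3s-n$. For the lower bound I bring in the double-counting identity $\sum_{j\in\ZZ_{3k-1}}|N(V_j)|=kn$, which is precisely~\eqref{eq:1734} applied to the $k$-regular graph $\G_k$ on $3k-1$ vertices (equivalently, each $V_j$ lies in exactly $k$ of the windows). Subtracting the displayed identity leaves the remaining $(3k-1)-3=3k-4$ windows with total size $kn-(n+x_i)=(k-1)n-x_i$; bounding each of them by $s$ gives $(k-1)n-x_i\le(3k-4)s$, that is, $x_i\ge(k-1)n-(3k-4)s$.

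The only step requiring genuine care is the covering claim of the second paragraph, where one must keep track of the reductions modulo $3k-1$ and verify that the three chosen windows overlap only at $V_i$; once that is in hand, both inequalities drop out by merely adding copies of~\eqref{eq:inde}. Let me also note that vertex-transitivity of $\G_k$ would permit assuming $i=0$, but this is unnecessary since the argument is uniform in $i$.
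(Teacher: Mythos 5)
Your proof is correct, and while the upper bound is established exactly as in the paper, the lower bound takes a genuinely different route. The paper proves the lower bound by first using the upper bound $|V_j|\le 3s-n$ on the $k-2$ classes $V_{i+k+1},\dots,V_{i+2k-2}$ and then writing $n-|V_i|$ as $|N(V_{i+k-1})|+|N(V_{i-k+1})|+\sum_{j=k+1}^{2k-2}|V_{i+j}|$, so its lower bound is chained through the already-proved upper bound. You instead invoke the global double-counting identity $\sum_{j\in\ZZ_{3k-1}}|N(V_j)|=kn$ from Fact~\ref{fact:1641}\ref{it:1641a} and subtract the exact three-window identity $|N(V_{i-k})|+|N(V_i)|+|N(V_{i+k})|=n+|V_i|$, leaving $3k-4$ windows with total size $(k-1)n-|V_i|$, each bounded by $s$. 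Your version makes the two inequalities structurally parallel and logically independent, and it leans on a fact the paper already has in hand, whereas the paper's version is a touch more hands-on but needs the upper bound as a prerequisite. Both are elementary and equally rigorous; your verification of the covering claim and of the pairwise distinctness of $i-k$, $i$, $i+k$ modulo $3k-1$ (for $k\ge 2$) is exactly the bookkeeping that needs to be done.
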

\begin{proof}
	The upper bound holds because of
	\begin{align*}
		|V_i| &= |V_0 \cup V_1 \cup \dots \cup V_{3k-2}|+|V_i|-n \\
		&= |N(V_i)| + |N(V_{i+k})|+|N(V_{i-k})|-n 
		\overset{\eqref{eq:inde}}{\le} 3s-n\,.
	\end{align*}
	By applying this estimate to $V_{i+k+1}, \ldots, V_{i+2k-2}$ instead of $V_i$ 
	we infer 
	\begin{align*}
		|V_i| &= n-\bigl(|N(V_{i+(k-1)})|+|N(V_{i-(k-1)})|+
			|V_{i+k+1}|+\dots+|V_{i+2k-2}|\bigr) \\
		&\overset{\eqref{eq:inde}}{\ge} n-\bigl(2s +(k-2)(3s-n)\bigr)= (k-1)n-(3k-4)s\,. \qedhere
	\end{align*}
\end{proof}

By combining Corollary~\ref{blowupedges} and Fact~\ref{Vi} we arrive at the main 
result of this section.

\begin{lemma}\label{ag0}
	Let $k$ be a natural number.
	If for $n\ge s\ge 1$ the graph $H$ is an $n$-vertex blow-up of the Andr\'asfai graph 
	$\G_k$ satisfying $\al(H)\le s$, then 
	\[
		\pushQED{\qed} 
		e(H)\le  g_k(n,s)\,.\qedhere
		\popQED
	\]
\end{lemma}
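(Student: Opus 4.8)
The plan is to deduce Lemma~\ref{ag0} directly from Corollary~\ref{blowupedges}, whose hypotheses have been set up precisely for this purpose. First I would record that $\G_k$ is a $k$-regular graph on $3k-1$ vertices, so that $F=\G_k$ is an admissible choice in Corollary~\ref{blowupedges}. It then remains to verify the two conditions on vertex classes $Z$ of $H$, namely $|N(Z)|\le s$ and $|Z|\ge (k-1)n-(3k-4)s$.

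The first condition is exactly inequality~\eqref{eq:inde}, which holds because $H$ is triangle-free (being a blow-up of the triangle-free graph $\G_k$) and hence the neighbourhood of every vertex class is an independent set, of size at most $\al(H)\le s$. The second condition is precisely the lower bound supplied by Fact~\ref{Vi}, whose proof in turn rests on~\eqref{eq:inde} applied to several vertex classes. With both conditions in hand, Corollary~\ref{blowupedges} yields $e(H)\le g_k(n,s)$, which is the assertion.

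One small point to handle is the degenerate case $k=1$: the statement says ``let $k$ be a natural number'', and for $k=1$ the Andr\'asfai graph is $\G_1=K_2$, which is $1$-regular on $3k-1=2$ vertices, so Corollary~\ref{blowupedges} still applies formally (alternatively, for $k=1$ a blow-up of $K_2$ is bipartite with parts of sizes $a,b$, $a+b=n$, $a,b\le s$, so $e(H)=ab\le$ the relevant quadratic; but invoking the corollary is cleaner). I would simply remark that the cited results were stated for $k\ge 2$ only where genuinely needed, and that the chain Fact~\ref{Vi} $\Rightarrow$ Corollary~\ref{blowupedges} goes through verbatim for $k=1$ as well, or restrict attention to $k\ge 2$ and dispose of $k=1$ by the direct bipartite computation.

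There is essentially no obstacle here: the lemma is a bookkeeping corollary assembled from Corollary~\ref{blowupedges}, Fact~\ref{Vi}, and~\eqref{eq:inde}, all already proved. The only thing to be careful about is matching the parameter $x=(k-1)n-(3k-4)s$ in Corollary~\ref{blowupedges} with the lower bound in Fact~\ref{Vi} and confirming that $g_k(n,s)$ in~\eqref{eq2} agrees with the expression produced by the corollary — a routine check already performed inside the proof of Corollary~\ref{blowupedges}.
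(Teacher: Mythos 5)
Your proposal matches the paper's own proof exactly: the paper obtains Lemma~\ref{ag0} by combining Corollary~\ref{blowupedges} with Fact~\ref{Vi} (the latter giving the lower bound $|Z|\ge(k-1)n-(3k-4)s$, the former the edge bound), just as you do. Your remarks on the $k=1$ case and on checking the $|N(Z)|\le s$ hypothesis via~\eqref{eq:inde} are accurate bookkeeping consistent with the paper.
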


As remarked in~\cite{LPR}*{Fact~1.5}, this estimate can hold with equality.
We would now like to complement this observation by an explicit description
of all extremal cases. As it turns out, every $n$-vertex blow-up $H$ of $\G_k$ 
with $\al(H)\le s$ and $e(H)=g_k(n,s)$ belongs to the following family of graphs. 

\begin{dfn}\label{def:exan}
	Given natural numbers $k\ge 2$, $n$, and $s\in [kn/(3k-1), (k-1)n/(3k-4)]$
	the family $\cG^n_k(s)$ consists of all graphs obtained from $\G_k$ by blowing up 
	its vertices
	\begin{enumerate} 
		\item[$\bullet$] $v_0$ and $v_k$ by $(k-1)n-(3k-4)s$, 
		\item[$\bullet$] $v_{2k-1}$ by $a$, $v_{2k}$ by $b$, where 
				$a,b\in [(k-1)n-(3k-4)s, 3s-n]$ are two integers summing up 
				to $(k-2)n-(3k-7)s$,
		\item[$\bullet$] and all remaining vertices by $3s-n$. 
	\end{enumerate}
\end{dfn}

Observe that if $s=kn/(3k-1)$, then $(k-1)n-(3k-4)s = 3s-n$ and the only graph 
in~$\cG^n_k(s)$ is the balanced blow-up $\G_k(3s-n)$.
At the other end of the spectrum we have the case $s=(k-1)n/(3k-4)$, where 
$(k-1)n-(3k-4)s=0$ and the graphs in~$\cG^n_k(s)$ are actually blow-ups 
of $\G_k\sm\{v_0, v_k\}$. In this graph, $v_{2k}$ is a twin sister of $v_{2k-1}$
and again the class $\cG^n_k(s)$ consists of a single graph. In view of 
$\G_{k-1}\cong \G_k - \{v_0, v_k, v_{2k}\}$ this graph is the balanced blow-up 
$\G_{k-1}(3s-n)$. However, if~$s$ lies strictly between $kn/(3k-1)$
and $(k-1)n/(3k-4)$, then $\cG^n_k(s)$ consists 
of $\big\lfloor\bigl((3k-1)s-kn\bigr)/2\big\rfloor+1$ mutually non-isomorphic graphs.  
		
\begin{lemma}\label{lem:2130}
	If $H$ denotes an $n$-vertex blow-up of $\G_k$ satisfying 
	\[
		\alpha(H)\le s
		\quad \text{ and } \quad
		e(H)= g_k(n,s)
	\]
	for some $k\ge 2$, then $kn/(3k-1)\le s\le (k-1)n/(3k-4)$
	and $H$ is isomorphic to some graph in $\cG^n_k(s)$.
\end{lemma}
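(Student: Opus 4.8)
The plan is to reverse-engineer the inequality chain that gave Lemma~\ref{ag0}, insisting that every intermediate estimate is tight, and then read off the sizes of the vertex classes. First I would record that Lemma~\ref{ag0} was obtained by feeding Fact~\ref{Vi} into Corollary~\ref{blowupedges}: since $\G_k$ is $k$-regular on $3k-1$ vertices, since \eqref{eq:inde} gives $|N(V_i)|\le s$ for every vertex class, and since the lower bound in Fact~\ref{Vi} gives $|V_i|\ge(k-1)n-(3k-4)s$, the hypotheses of Corollary~\ref{blowupedges} are met with $x=(k-1)n-(3k-4)s$. Hence the assumption $e(H)=g_k(n,s)$ forces equality in Corollary~\ref{blowupedges}, so clauses~\ref{it:2140a}--\ref{it:2140c} of that corollary apply. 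Clause~\ref{it:2140a} immediately yields $kn/(3k-1)\le s\le(k-1)n/(3k-4)$, which is the first assertion. The case $s=kn/(3k-1)$ is then disposed of exactly as in the remark following Definition~\ref{def:exan}: here $x$ equals the average class size, so $H=\G_k(3s-n)$, which is the unique member of $\cG^n_k(s)$. So from now on I assume $kn/(3k-1)<s\le(k-1)n/(3k-4)$, equivalently $(3k-1)s-kn>0$.

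Next I would exploit clauses~\ref{it:2140b} and~\ref{it:2140c}. Write $x=(k-1)n-(3k-4)s$ and call a vertex class \emph{small} if $|V_i|=x$ and \emph{large} if $|V_i|>x$; by~\ref{it:2140b} there is at least one small class, and by~\ref{it:2140c} every large class $V_i$ has $|N(V_i)|=s$. The key structural point is that small classes cannot be too spread out around the cycle $\ZZ_{3k-1}$. Concretely, examine the identity used in the proof of Fact~\ref{Vi}, namely
\[
	|V_i| = n-\bigl(|N(V_{i+(k-1)})|+|N(V_{i-(k-1)})|+|V_{i+k+1}|+\dots+|V_{i+2k-2}|\bigr).
\]
If $V_i$ is small, then the bound in Fact~\ref{Vi} is tight, which forces $|N(V_{i+(k-1)})|=|N(V_{i-(k-1)})|=s$ and $|V_{i+k+1}|=\dots=|V_{i+2k-2}|=3s-n$; i.e.\ the $k-2$ classes strictly ``between'' the two copies of $V_i$'s ``antipode'' all attain the maximum $3s-n$, and two particular neighbourhood-unions are full. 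Symmetrically, pushing the same tightness through $|N(V_i)|=|V_{i+k}|+\dots+|V_{i+2k-1}|=s$ when $|N(V_i)|=s$ tells us which sums of consecutive classes equal $s$. By chasing these equalities around $\ZZ_{3k-1}$ I expect to prove that the set of small classes is exactly an antipodal pair $\{v_j,v_{j+k}\}$ (or, in the degenerate boundary case $s=(k-1)n/(3k-4)$ where $x=0$, that the two ``small'' classes are empty and one gets a blow-up of $\G_k\sm\{v_0,v_k\}\cong\G_{k-1}$ with a pair of twins, again a single graph in $\cG^n_k(s)$). Once the two small classes are pinned down as $\{v_j,v_{j+k}\}$, rotational symmetry of $\G_k$ lets me relabel so that $j=0$; then $v_0$ and $v_k$ have size $x$.

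Finally I would determine the remaining $3k-3$ classes. Every class other than $v_0,v_k$ is large, hence has neighbourhood-union of size exactly $s$ by~\ref{it:2140c}. Using \eqref{eq:inde} this says $|V_{i+k}|+\dots+|V_{i+2k-1}|=s$ for all $i\notin\{0,k\}$; subtracting consecutive such equations gives $|V_{i-1}|=|V_{i+2k-1}|$ for the relevant indices, i.e.\ most classes that are ``$2k-1$ apart'' (equivalently $k$ apart, since $2k-1\equiv k\pmod{3k-1}$ is false — rather $|i-j|\in\{k,\dots,2k-1\}$ describes adjacency) share a common value. Unwinding this system of linear relations among the $x_i$, together with the two constraints $x_0=x_k=x$ and $\sum_i x_i=n$, leaves exactly one degree of freedom; I expect it to be the size of the pair $(v_{2k-1},v_{2k})$, the two vertices ``flanking'' the block opposite to $\{v_0,v_k\}$, with $v_{2k-1}\mapsto a$, $v_{2k}\mapsto b$, $a+b=(k-2)n-(3k-7)s$, all other classes equal to $3s-n$, and each of $a,b$ confined to $[x,3s-n]$ by Fact~\ref{Vi}. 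That is precisely Definition~\ref{def:exan}, so $H\in\cG^n_k(s)$. The main obstacle I anticipate is the bookkeeping in the middle paragraph: showing rigorously that the small classes form a single antipodal pair rather than some larger configuration, which amounts to carefully tracking which of the cyclic interval-sums $\sum|V_t|$ are forced to equal $s$ and deriving a contradiction from any small class outside $\{v_j,v_{j+k}\}$; handling the boundary value $s=(k-1)n/(3k-4)$ (where $x=0$ and the ``small classes'' degenerate) as a separate, easy sub-case should keep the argument clean.
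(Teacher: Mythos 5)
Your high-level strategy coincides with the paper's: run the chain Fact~\ref{Vi} $\Rightarrow$ Corollary~\ref{blowupedges} in reverse, use the equality clauses~\ref{it:2140a}--\ref{it:2140c} of that corollary, and then propagate tightness through the identity in the proof of Fact~\ref{Vi} to pin down the class sizes. The boundary treatment of $s=kn/(3k-1)$ and $s=(k-1)n/(3k-4)$ is also the same.

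There is, however, a genuine flaw in the structural claim you lean on in the middle. You assert that ``the set of small classes is exactly an antipodal pair $\{v_j,v_{j+k}\}$'' (incidentally these two vertices are adjacent, not antipodal, since $|j-(j+k)|=k$), and you then argue that ``every class other than $v_0,v_k$ is large, hence has neighbourhood-union of size exactly~$s$.'' Both statements are false in general. The family $\cG^n_k(s)$ explicitly allows $a=(k-1)n-(3k-4)s$, in which case $V_{2k-1}$ is a \emph{third} class of minimum size; the complementary $b$ then equals $3s-n$. So there can be three small classes, not two, and the claim that every class outside $\{V_0,V_k\}$ satisfies $|N(V_i)|=s$ by clause~\ref{it:2140c} breaks precisely for that third small class. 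If you use the full set of equations $|N(V_i)|=s$ for all $i\notin\{0,k\}$ to set up your linear system, you have more equations than are actually available, and the derivation is not justified. The paper avoids this by a case split: either $|V_k|=|V_{2k-1}|=(k-1)n-(3k-4)s$, in which case a counting argument ($n\le 3x+(3k-4)(3s-n)=n$) forces everything else to the maximum $3s-n$ directly; or else WLOG $V_{2k-1}$ is large, and then only the single equation $|N(V_{2k-1})|=s$ together with the non-equality bound $|N(V_{2k})|\le s$ and the tightness in the proof of Fact~\ref{Vi} (applied to $V_0$ and $V_k$) suffice. So you should replace ``exactly an antipodal pair'' with the weaker statement that there exist two classes at cyclic distance $k$ both of minimum size, separate out the case where a third class also has minimum size, and use only the equalities $|N(V_i)|=s$ that clause~\ref{it:2140c} actually grants you (i.e., only for classes you have already shown to be large).
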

\begin{proof}
	As usual we denote the vertex classes of $H$ corresponding to the vertices of $\G_k$
	by $V_0, \ldots, V_{3k-2}$. Recall that by Fact~\ref{Vi} 
	we have 
	\begin{equation}\label{eq:1847}
		(k-1)n - (3k-4)s \le |V_i|\le 3s-n
		\quad \text{ for every } i\in\ZZ_{3k-1}\,.
	\end{equation}
	In particular, $H$ has the properties enumerated in the moreover-part of
	Corollary~\ref{blowupedges} and clause~\ref{it:2140a} corresponds to the estimates on $s$ 
	stated in the lemma. Next,~\ref{it:2140b} allows us to assume, without loss of generality, that 
	\begin{equation}\label{eq:v1}
			|V_0| = (k-1)n - (3k-4)s\,.
	\end{equation}
	In the special case 
	\[
		|V_k|=|V_{2k-1}|=(k-1)n - (3k-4)s
	\]
	we have 
	\[
		n=|V_0|+\ldots +|V_{3k-2}|
		\overset{\eqref{eq:1847}}{\le}
		3\bigl((k-1)n - (3k-4)s\bigr)+(3k-4)(3s-n)=n\,,
	\]
	which yields $|V_i|=3s-n$ for every $i\ne 0, k, 2k-1$, meaning that 
	$a=(k-1)n - (3k-4)s$ exemplifies $H\in \cG^n_k(s)$. By symmetry we may 
	therefore suppose $|V_{2k-1}|>(k-1)n - (3k-4)s$ from now on, which  
	in view of Corollary~\ref{blowupedges}\ref{it:2140c} entails
	\[
		s=|N(V_{2k-1})|=|V_0|+\ldots+|V_{k-1}|
		\overset{\eqref{eq:1847}}{\le}
		\bigl((k-1)n - (3k-4)s\bigr)+(k-1)(3s-n)=s\,,
	\]
	i.e., 
	\begin{equation}\label{eq:1903}
		|V_1|=\ldots=|V_{k-1}|=3s-n\,.
	\end{equation}
	Because of $|V_1|+\ldots+|V_k|=|N(V_{2k})|\le s$ and~\eqref{eq:1847} this yields 
	\begin{equation}\label{eq:1911}
		|V_k|=(k-1)n - (3k-4)s\,,
	\end{equation}
	which together with~\eqref{eq:v1} establishes the first bullet in 
	Definition~\ref{def:exan}.
	
	Next we observe that, since by~\eqref{eq:v1} the lower bound on $|V_0|$ provided by 
	Fact~\ref{Vi} holds with equality, an easy inspection of the proof of Fact~\ref{Vi}
	discloses 
		\[
			|V_{k+1}|=\ldots=|V_{2k-2}|=3s-n
		\]
	and for the same reason~\eqref{eq:1911} leads to $|V_{2k+1}|=\ldots=|V_{3k-2}|=3s-n$.
	Combined with~\eqref{eq:1903} these equations confirm the third bullet in
	Definition~\ref{def:exan} and, finally, the second bullet follows easily in the 
	light of $v(H)=n$ and~\eqref{eq:1847}.
\end{proof}

\section{Vega graphs and their blow-ups}

In this section we investigate another important class of dense triangle-free graphs which, 
unlike Andr\'asfai graphs, have  chromatic number four.
Let us recall that the first $4$-chromatic triangle-free graph on $n$ vertices whose minimum 
degree is larger than $n/3$ was a blow-up of the Gr\"otzsch graph discovered in 1981
by H\"aggkvist~\cite{H}. In 1998, Brandt and Pisanski~\cite{BP} worked with a computer 
program named Vega and found an infinite sequence of $4$-chromatic triangle-free graphs 
admitting such blow-ups (see Fact \ref{f:G1}). Due to their origin, these graphs are 
called \emph{Vega graphs}. 

\begin{figure}[ht]
	\centering
	\begin{tikzpicture}[scale=.8]
	
	\coordinate (v) at (30:4cm);
	\coordinate (a) at (90:4cm);
	\coordinate (w) at (150:4cm);
	\coordinate (b) at (210:4cm);
	\coordinate (u) at (270:4cm);
	\coordinate (c) at (330:4cm);
	
	\coordinate (x) at (7,3);
	\coordinate (y) at (7,-3);

	\foreach \i in {1,...,8}{
		\coordinate (v\i) at (135-\i*45:2cm);
	}
	
	\draw (x)--(y);
	\draw [color=violet!75!black, thick](c)--(x)--(a);
	\draw [color=orange!75!black, thick](v)--(y)--(u);
	\draw [rounded corners=35, color=violet!75!black, thick] (x) -- (0,5.5)--(-5,2)--(b);
	\draw [rounded corners=35, color=orange!75!black, thick] (y) -- (0,-5.5)--(-5,-2)--(w);

	\foreach \i in {4,...,6}{	
		\draw (v\i)--(v1);
	}		
	\foreach \i in {5,...,7}{	
		\draw (v\i)--(v2);
	}		
	\foreach \i in {6,...,8}{	
		\draw (v\i)--(v3);
	}		
	\foreach \i in {3,...,5}{	
		\draw  (v\i)--(v8);
	}		
	\draw (v4)--(v7);
	\draw  (a)--(v)--(c)--(u)--(b)--(w)--(a);

	\begin{pgfonlayer}{front}
	
	\foreach \i in {a,b,c}{
		\draw[violet!75!black, very thick]  (\i) circle (4pt);
	}	
	
	\foreach \i in {v, u, w}{
		\draw[orange!75!black, very thick]  (\i) circle (4pt);
	}	
	
	\foreach \i in {c, w}{
		\fill[blue!75!white]  (\i) circle (4pt);
	}	
	\foreach \i in {a,u}{
		\fill[red!75!white]  (\i) circle (4pt);
	}	
	\foreach \i in {v,b}{
		\fill[green!75!white]  (\i) circle (4pt);
	}	
	
	\foreach \i in {1,...,3}{
		\draw[red!75!black, very thick]  (v\i) circle (4pt);
		\fill[red!75!white]  (v\i) circle (4pt);}
	
	\foreach \i in {4,...,6}{
		\draw[green!75!black, very thick]  (v\i) circle (4pt);
		\fill[green!75!white]  (v\i) circle (4pt);}
	
	\foreach \i in {7,8}{
		\draw[blue!75!black, very thick]  (v\i) circle (4pt);
		\fill[blue!75!white]  (v\i) circle (4pt);}

	\draw[violet!75!black, very thick]  (x) circle (4pt);
	\fill[violet!75!white]  (x) circle (4pt);
	
	\draw[orange!75!black, very thick]  (y) circle (4pt);
	\fill[orange!75!white]  (y) circle (4pt);
	
	\node at ($(a)+(0,-.4)$) {$a$};
	\node at ($(b)+(.4,.3)$) {$b$};
	\node at ($(c)+(-.4,.3)$) {$c$};
	\node at ($(v)+(-.4,-.3)$) {$v$};
	\node at ($(w)+(.4,-.3)$) {$w$};
	\node at ($(u)+(0,.4)$) {$u$};
	\node at ($(x)+(.4,.1)$) {$x$};
	\node at ($(y)+(.4,-.1)$) {$y$};
	
	\node at ($(v1)+(.4,.2)$) {$v_0$};
	\node at ($(v3)+(.3,-.4)$) {$v_{i-1}$};
	\node at ($(v4)+(0,-.4)$) {$v_{i}$};
	\node at ($(v6)+(0,-.4)$) {$v_{2i-1}$};
	\node at ($(v7)+(-.2,-.3)$) {$v_{2i}$};
	\node at ($(v8)+(0,.4)$) {$v_{3i-2}$};

	\end{pgfonlayer}
	
	\end{tikzpicture}
	\caption{The Vega graph $\Upsilon^{00}_i$. The vertices of the external 6-cycle $\mathcal C_6$ 
		are connected with the vertices of the same colour of the Andr\'asfai graph $\Gamma_i$ in the middle. }
	\label{fig:Vega}
\end{figure}
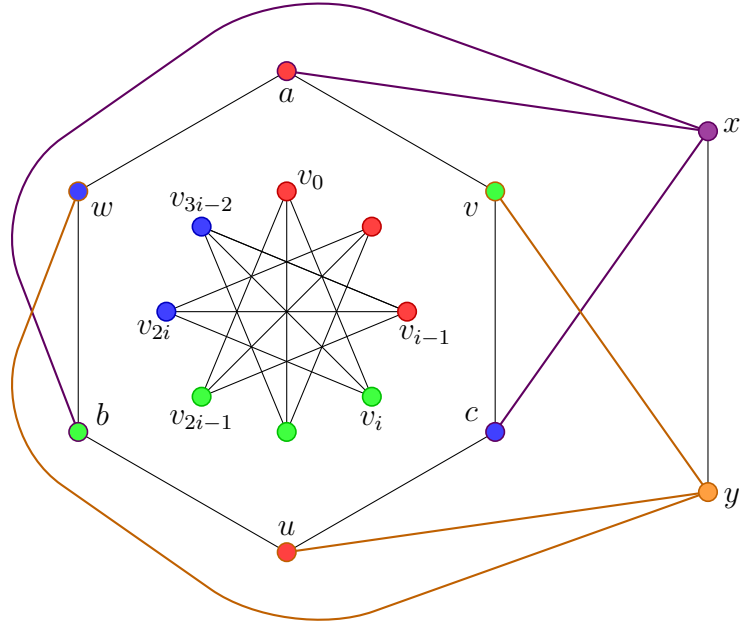

\subsection{Definitions and main results.}
We commence by presenting a construction of Vega graphs following the work of Brandt and 
Thomass\'e~\cite{BT}. Let an integer $i\ge 2$ be given. Start with an Andr\'asfai graph~$\G_i$ 
on the vertex set $\{v_0, \ldots,v_{3i-2}\}$ and add an edge $xy$ together with an 
induced $6$-cycle $avcubw$ such that $x$ is joined to $a,b,c$ and $y$ is joined to $u,v,w$. 
Moreover, connect 
\begin{enumerate}
	\item[$\bullet$] $a$, $u$ to $\{v_0, \ldots, v_{i-1}\}$,
	\item[$\bullet$] $b$, $v$ to $\{v_i, \ldots, v_{2i-1}\}$, 
	\item[$\bullet$] and $c$, $w$ to $\{v_{2i}, \ldots, v_{3i-2}\}$.
\end{enumerate}
This completes the definition of the sole Vega graph on $3i + 7$ vertices, 
which we denote by~$\Upsilon_i^{00}$ and sometimes just by $\Upsilon_i$ 
(see Figure \ref{fig:Vega}).

There are two Vega graphs on $3i + 6$ vertices obtainable from $\Upsilon_i^{00}$ by 
simple vertex deletions, namely $\Upsilon_i^{10}=\Upsilon_i^{00} - \{y\}$ and 
$\Upsilon_i^{01}=\Upsilon_i^{00} - \{v_{2i-1}\}$. Finally, the last Vega graph, 
$\Upsilon_i^{11}=\Upsilon_i^{00} - \{y, v_{2i-1}\}$, has $3i + 5$ vertices. 
Observe that the vertex $y$ is present in $\Upsilon_i^{\mu\nu}$ if and only if $\mu=0$. 
Similarly, $\nu=1$ in $\Upsilon_i^{\mu\nu}$ indicates the absence of the vertex $v_{2i-1}$. 
For instance,~$\Upsilon_2^{11}$ is the well known Gr\"otzsch graph. For later use 
we would like to remark that~$\Upsilon_2^{11}\subseteq \Upsilon_i^{\mu\nu}$ gives a  
quick proof of the aforementioned estimate $\chi(\Upsilon_i^{\mu\nu})\ge 4$. 
The main result of this section reads as follows.

\begin{lemma}\label{l:vegaex}
	Let integers $i\ge 2$ and $\mu, \nu \in \{0,1\}$ be given and set $k=9i-(6+\mu+\nu)$.
	If for $n\ge s\ge 1$ the graph $H$ is an $n$-vertex blow-up of the Vega graph $\ve$
	satisfying $\al(H)\le s$, then 
	\[
		e(H)\le g_k(n,s)\,.
	\]
\end{lemma}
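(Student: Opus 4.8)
The plan is to mimic the strategy that worked for Andr\'asfai graphs: bound the sizes of the vertex classes of $H$ from below and then invoke Corollary~\ref{blowupedges}. The first obstacle is that $\ve$ is \emph{not} regular, so Corollary~\ref{blowupedges} does not apply to $\ve$ directly. The remedy is the observation recorded in Section~\ref{sec:1431} that a blow-up of a blow-up of $F$ is again a blow-up of $F$, together with Fact~\ref{fact:A1}\ref{it:1112a}-type reasoning: I would first check that $\ve$ admits a $k$-regular proper blow-up $F_{\mu\nu}$ on $3k-1$ vertices, where $k=9i-(6+\mu+\nu)$. Indeed, the Andr\'asfai part $\G_i$ has $3i-1$ vertices each of degree $i$, the six cycle vertices $a,b,c,u,v,w$ have degree $i+2$ (resp.\ $i+1$, $i$ when $\mu$ or $\nu$ deletes a neighbour), and $x,y$ have degree $3$; to balance these out into a common degree one blows each vertex up by a suitable multiplicity so that the weighted degrees coincide. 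A short linear-algebra computation over the degree sequence produces multiplicities summing to $3k-1$ with common degree $k$; this is where the arithmetic identity $k=9i-(6+\mu+\nu)$ comes from, and checking it is the one genuinely fiddly bit. Having $F_{\mu\nu}$ in hand, every $n$-vertex blow-up of $\ve$ is, after passing to a large balanced blow-up (which changes neither $e(H)/n^2$ nor $\alpha(H)/n$ in the relevant normalised sense, and in any case does not affect the inequality $e(H)\le g_k(n,s)$ once it is known for all large balanced refinements — here one uses that $g_k$ is quadratic and homogeneous), a proper blow-up of the $k$-regular graph $F_{\mu\nu}$.

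Next I would establish the lower bound on vertex-class sizes required by Corollary~\ref{blowupedges}, namely $|Z|\ge (k-1)n-(3k-4)s$ for every vertex class $Z$ of $H$. As in the proof of Fact~\ref{Vi}, the key input is that $\ve$ is triangle-free, so each $|N(Z)|$ is the size of an independent set and hence at most $s$; equivalently, writing $z$ for the weight function on $V(\ve)$ recording class sizes, every closed-neighbourhood-type sum is controlled. The mechanism of Fact~\ref{Vi} was: an arbitrary class size can be written as $n$ minus a sum of $|N(\cdot)|$'s over a cleverly chosen family of vertices, each term $\le s$, with the number of terms tuned so that the bound reads exactly $(k-1)n-(3k-4)s$. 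I would carry out the analogous bookkeeping on $\ve$: for each of the three structural types of vertex (an Andr\'asfai vertex $v_j$, a hexagon vertex, and $x$ or $y$) exhibit an explicit covering of $V(\ve)\setminus\{$that vertex$\}$ by neighbourhoods $N(\cdot)$ whose number matches $3k-4$ after accounting for how the multiplicities in $F_{\mu\nu}$ interact with the regularity. Because $F_{\mu\nu}$ is $k$-regular on $3k-1$ vertices, the averaging in Fact~\ref{fact:1641}\ref{it:1641b} gives $\sum_i |N(V_i)| = kn$ with $3k-1$ summands, so the "defect" form of the argument (largest class $=$ $n$ minus $(3k-4)$ of the neighbourhood sums, using $|N|\le s$) yields precisely the desired lower bound $(k-1)n-(3k-4)s$; the content is to verify that such a decomposition genuinely exists for \emph{each} vertex class, which is a finite check exploiting the explicit adjacency description of $\ve$.

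With both hypotheses of Corollary~\ref{blowupedges} verified — $|N(Z)|\le s$ (triangle-freeness) and $|Z|\ge (k-1)n-(3k-4)s$ (the defect argument) — the corollary immediately delivers $e(H)\le g_k(n,s)$, completing the proof. I expect the main obstacle to be the combinatorial verification in the previous paragraph: unlike $\G_k$, the Vega graph has an irregular, somewhat ad hoc structure (the hexagon glued to $\G_i$ via a colour-respecting matching, plus the pendant edge $xy$), so producing, uniformly over the four cases $\mu,\nu\in\{0,1\}$, the right neighbourhood covers and checking that the multiplicities of $F_{\mu\nu}$ make the counts line up is where the real work lies. The regularisation step (reducing to $F_{\mu\nu}$) and the final invocation of Corollary~\ref{blowupedges} are, by contrast, routine.
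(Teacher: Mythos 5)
Your high-level plan matches the paper's: Fact~\ref{f:G1} plays exactly the role of your $F_{\mu\nu}$, and the final step is indeed an application of Corollary~\ref{blowupedges} to $H$ regarded as a blow-up of the $k$-regular graph $G^{\mu\nu}_i=\ve(\o)$. However, there is a genuine gap in where you place the lower bound on class sizes. You aim to prove $|Z|\ge(k-1)n-(3k-4)s$ for every vertex class $Z$ of $H$ as a blow-up of $\ve$, and then re-interpret $H$ as a blow-up of $F_{\mu\nu}$ by splitting each $Z$ into $\o(z)$ particles. But the hypothesis of Corollary~\ref{blowupedges} must hold for the $3k-1$ \emph{particle} classes, not the original ones: if $\o(z)\ge 2$, a class $Z$ of size exactly $(k-1)n-(3k-4)s$ splits into particles of size roughly $|Z|/\o(z)$, which can fall well below the required threshold. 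The paper therefore proves the \emph{weight-sensitive} bound $|Z|\ge\o(z)(3s-n)-\lambda$ with $\lambda=(3k-1)s-kn$ (Fact~\ref{Vegadown}), from which the particle sizes $\lfloor|Z|/\o(z)\rfloor\ge 3s-n-\lceil\lambda/\o(z)\rceil\ge(k-1)n-(3k-4)s$ follow. Your claimed bound is the specialisation to $\o(z)=1$ and does not by itself control the particles.

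Two further points where the proposal underestimates the work. First, the lower bound in Fact~\ref{Vegadown} is derived \emph{using} the upper bound $|Z|\le\o(z)(3s-n)$ (Fact~\ref{Vegaup}), analogous to how the lower bound in Fact~\ref{Vi} uses the upper bound proved in the same fact; your proposal never mentions establishing this, and in the Vega case it is the technically heaviest step — it amounts to exhibiting, for each $z$, a nonnegative integer combination $h^z$ of vertex-indicator functions whose ``neighbourhood transform'' equals $\o(z)\cdot\Ind+\Ind(z)$ on $V(\ve)$, which the paper does by an intricate case analysis with the functions $\Ind(\ccC_6)$, $\Ind(\G)$, $f$ and $g$. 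Second, the ``defect'' decomposition you invoke relies on the cyclic structure of $\Gamma_k$ (partitioning $V(\Gamma_k)\setminus\{v_i\}$ into two disjoint neighbourhoods), but $F_{\mu\nu}$ has chromatic number $4$ and is not a blow-up of any Andr\'asfai graph, so this decomposition is not available there. The paper instead performs the decomposition inside $\ve$ itself: for each $z\in V(\ve)$ it finds adjacent non-neighbours $z',z''$ of $z$ with a compatibility condition involving $\{x,y\}$ and $\{v_0,v_{2i-1}\}$, and combines $\{z\}\dcup\Nn(z')\dcup\Nn(z'')$ with the Fact~\ref{Vegaup} upper bounds on the remaining classes. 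This per-vertex choice of $(z',z'')$ is the finite check you anticipated, but it lives in $\ve$, not in $F_{\mu\nu}$, and the bound it yields is the $\o$-weighted one, not the uniform one you stated.
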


The proof of the above lemma is based on Corollary \ref{blowupedges}, which will become 
applicable once we have exhibited a $k$-regular blow-up of $\ve$ on $3k-1$ 
vertices. 
To this end we shall use an appropriate weight function 
\[
	\omega_{\mu\nu}\colon V(\Upsilon_i^{}) \longrightarrow \ZZ_{\ge 0}
\]
from~\cite{BT}. In the special case $\mu=\nu=0$ this function is defined by the following 
table.

\smallskip

\begin{center}
\begin{tabular}{|c|c|c|c|c|c|}
\hline 
vertex $z$     &$x$, $y$&$a$, $b$, $u$, $v$&$c$, $w$&$v_0$, $v_{2i-1}$& $v_j$ (where $j\ne 0, 2i-1$) 
\\ \hline
weight $\omega_{00}(z)$ &$1$     &$3i-2$            &$3i-3$  &$1$           &$3$ \\ \hline 
\end{tabular}
\end{center}

\smallskip

In general, one uses the function 
\begin{equation}\label{eq:1016}
	\omega_{\mu\nu}=\omega_{00}-\mu f -\nu g\,,
\end{equation}
where $f, g\colon V(\Upsilon_i^{}) \longrightarrow \ZZ$ are defined by 
\[
	f(z)=\begin{cases}
		1    & \text {if } z=u, v, w, y \cr
		-1   & \text {if } z=x \cr
		0    & \text {otherwise } 
	\end{cases}
\]
and
\[
	g(z)=\begin{cases}
		1    & \text {if } z=b, v, v_{i-1}, v_{2i-1} \cr
		-1   & \text {if } z=v_0 \cr
		0    & \text {otherwise. } 
	\end{cases}
\]

The weight function~$\omega_{\mu\nu}$ is visualised in Figure~\ref{fig:Vega_w}.

	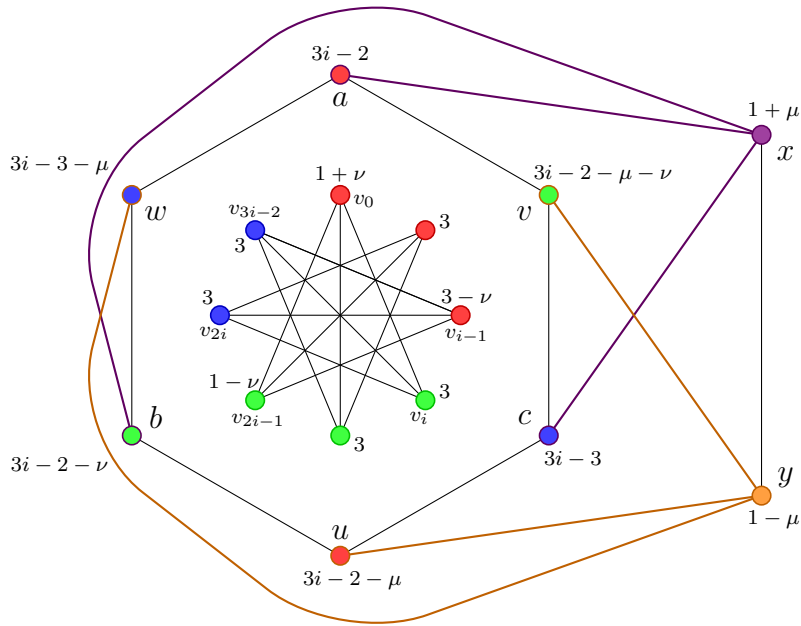
\begin{figure}[ht]
		\centering
		\begin{tikzpicture}[scale=.8]
		
		\coordinate (v) at (30:4cm);
		\coordinate (a) at (90:4cm);
		\coordinate (w) at (150:4cm);
		\coordinate (b) at (210:4cm);
		\coordinate (u) at (270:4cm);
		\coordinate (c) at (330:4cm);
		
		\coordinate (x) at (7,3);
		\coordinate (y) at (7,-3);

		\foreach \i in {1,...,8}{
			\coordinate (v\i) at (135-\i*45:2cm);
		}
		
		\draw (x)--(y);
		\draw [color=violet!75!black, thick](c)--(x)--(a);
		\draw [color=orange!75!black, thick](v)--(y)--(u);
		\draw [rounded corners=35, color=violet!75!black, thick] (x) -- (0,5.5)--(-4.5,2)--(b);
		\draw [rounded corners=35, color=orange!75!black, thick] (y) -- (0,-5.5)--(-4.5,-2)--(w);

		\foreach \i in {4,...,6}{	
			\draw (v\i)--(v1);
		}		
		\foreach \i in {5,...,7}{	
			\draw (v\i)--(v2);
		}		
		\foreach \i in {6,...,8}{	
			\draw (v\i)--(v3);
		}		
		\foreach \i in {3,...,5}{	
			\draw  (v\i)--(v8);
		}		
		\draw (v4)--(v7);
		\draw  (a)--(v)--(c)--(u)--(b)--(w)--(a);

		\begin{pgfonlayer}{front}
		
		\foreach \i in {a,b,c}{
			\draw[violet!75!black, very thick]  (\i) circle (4pt);
		}	
		
		\foreach \i in {v, u, w}{
			\draw[orange!75!black, very thick]  (\i) circle (4pt);
		}	
		
		\foreach \i in {c, w}{
			\fill[blue!75!white]  (\i) circle (4pt);
		}	
		\foreach \i in {a,u}{
			\fill[red!75!white]  (\i) circle (4pt);
		}	
		\foreach \i in {v,b}{
			\fill[green!75!white]  (\i) circle (4pt);
		}	
		
		\foreach \i in {1,...,3}{
			\draw[red!75!black, very thick]  (v\i) circle (4pt);
			\fill[red!75!white]  (v\i) circle (4pt);}
		
		\foreach \i in {4,...,6}{
			\draw[green!75!black, very thick]  (v\i) circle (4pt);
			\fill[green!75!white]  (v\i) circle (4pt);}
		
		\foreach \i in {7,8}{
			\draw[blue!75!black, very thick]  (v\i) circle (4pt);
			\fill[blue!75!white]  (v\i) circle (4pt);}

		\draw[violet!75!black, very thick]  (x) circle (4pt);
		\fill[violet!75!white]  (x) circle (4pt);
		
		\draw[orange!75!black, very thick]  (y) circle (4pt);
		\fill[orange!75!white]  (y) circle (4pt);
		
		\node at ($(a)+(0,.35)$) {{\tiny $3i-2$}};
		\node at ($(b)+(-1.2,-.5)$) {{\tiny $3i-2-\nu$}};
		\node at ($(c)+(.4,-.4)$) {{\tiny $3i-3$}};
		\node at ($(v)+(.9,.35)$) {{\tiny $3i-2-\mu-\nu$}};
		\node at ($(w)+(-1.2,.5)$) {{\tiny $3i-3-\mu$}};
		\node at ($(u)+(.2,-.4)$) {{\tiny $3i-2-\mu$}};
		\node at ($(x)+(.2,.4)$) {{\tiny $1+\mu$}};
		\node at ($(y)+(.2,-.4)$) {{\tiny $1-\mu$}};
		
		\node at ($(v1)+(.0,.35)$) {{\tiny $1+\nu$}};
		\node at ($(v3)+(.1,.3)$) {{\tiny $3-\nu$}};
		\node at ($(v2)+(.32,.15)$) {{\tiny $3$}};
		\node at ($(v4)+(.32,.15)$) {{\tiny $3$}};
		\node at ($(v6)+(-.35,.3)$) {{\tiny $1-\nu$}};
		\node at ($(v7)+(-.2,.3)$) {{\tiny $3$}};
		\node at ($(v8)+(-.25,-.25)$) {{\tiny $3$}};
		\node at ($(v5)+(.32,-.15)$) {{\tiny $3$}};

		\node at ($(a)+(0,-.4)$) {$a$};
		\node at ($(b)+(.4,.3)$) {$b$};
		\node at ($(c)+(-.4,.3)$) {$c$};
		\node at ($(v)+(-.4,-.3)$) {$v$};
		\node at ($(w)+(.4,-.3)$) {$w$};
		\node at ($(u)+(0,.4)$) {$u$};
		\node at ($(x)+(.4,-.3)$) {$x$};
		\node at ($(y)+(.4,.3)$) {$y$};
		
		\node at ($(v1)+(.4,-.1)$) {\tiny $v_0$};
		\node at ($(v3)+(.1,-.35)$) {\tiny $v_{i-1}$};
		\node at ($(v4)+(-.1,-.3)$) {\tiny $v_{i}$};
		\node at ($(v6)+(0.05,-.35)$) {\tiny $v_{2i-1}$};
		\node at ($(v7)+(-.1,-.3)$) {\tiny $v_{2i}$};
		\node at ($(v8)+(0,.3)$) {\tiny $v_{3i-2}$};
		
		\end{pgfonlayer}
		
		\end{tikzpicture}
		\caption{Positive integers assigned to the vertices of the original graph $\Upsilon_i^{\mu\nu}$.}
		\label{fig:Vega_w}
	\end{figure}

One checks easily that the support of $\omega_{\mu\nu}$ always contains $V(\Upsilon_i^{11})$, 
that $y$ is in this support if and only if $\mu=0$, and that $v_{2i-1}$ is in this support if and only 
if $\nu=0$. Consequently, 
\begin{equation}\label{eq:0959}
	\text{ the support of $\omega_{\mu\nu}$ is $V(\Upsilon_i^{\mu\nu})$}\,.
\end{equation}

Given a signed weight function $h\colon V(\Upsilon_i)\longrightarrow \ZZ$ and a subset 
$A\subseteq V(\Upsilon_i)$ of the vertex set we define 
\[
	h(A)=\sum_{z\in A}h(z)\,.
\]
For instance, some quick calculations disclose the formulae
\begin{align*}
	\omega_{00}\bigl(V(\Upsilon_i)\bigr) &=27i-19 \\
	\text{ and } \quad f\bigl(V(\Upsilon_i)\bigr) &=g\bigl(V(\Upsilon_i)\bigr)=3\,,
\end{align*}
which by linearity,~\eqref{eq:1016}, and~\eqref{eq:0959} imply
\begin{equation}\label{eq:2212}
	\omega_{\mu\nu}\bigl(V(\Upsilon^{\mu\nu}_i)\bigr)=3\bigl(9i-(6+\mu+\nu)\bigr)-1\,.
\end{equation}
Similarly, for every vertex $z$ we have 
\begin{align*}
	\omega_{00}\bigl(\Nn(z)\bigr) &=9i-6\,, \\
	f\bigl(\Nn(z)\bigr) &=\begin{cases}
		1 & \text{ if $z\ne y$} \cr
		2 & \text{ if $z= y$,}
		\end{cases} \\
	\text {and} \quad
	g\bigl(\Nn(z)\bigr)&=\begin{cases}
		1 & \text{ if $z\ne v_{2i-1}$} \cr
		2 & \text{ if $z= v_{2i-1}$,}
		\end{cases}
\end{align*}
whence
\begin{equation}\label{eq:2227}
	\o\bigl(\Nn(z)\bigr)=9i-(6+\mu+\nu) 
	\quad \text{for all } z\in V(\Upsilon_i^{\mu\nu})\,.
\end{equation}

Let $G^{\mu\nu}_i = \ve(\o)$ be the blow-up of $\Upsilon^{\mu\nu}_i$ obtained by replacing 
every vertex $z$ by an independent set~$Z$ of size $\o(z)$, and let 
	\begin{equation}\label{eq:k}
		k=9i-(6+\mu+\nu)
	\end{equation}
be the number occurring in Lemma~\ref{l:vegaex}. We summarise~\eqref{eq:2212} 
and~\eqref{eq:2227} in the following observation due to~\cite{BP}.

\begin{fact}\label{f:G1}
	If $i\ge 2$ and $\mu,\nu \in \{0,1\}$, then $G^{\mu\nu}_i$ is a $k$-regular blow-up 
	of $\vv{i}{\mu\nu}$ on $3k-1$ vertices. \hfill \qed
\end{fact}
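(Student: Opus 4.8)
The plan is to read the claim off directly from the identities~\eqref{eq:2212} and~\eqref{eq:2227} established above, together with the support computation~\eqref{eq:0959}. First I would check that $G^{\mu\nu}_i=\ve(\o)$ is genuinely a \emph{proper} blow-up of $\vv{i}{\mu\nu}$: by~\eqref{eq:0959} the support of $\o$ is exactly $V(\vv{i}{\mu\nu})$, so each $z\in V(\vv{i}{\mu\nu})$ is replaced by a non-empty independent set~$Z$ of size $\o(z)\ge 1$, while every vertex of $\Upsilon_i$ lying outside $\vv{i}{\mu\nu}$ has weight~$0$ and contributes nothing (concretely, $y$ has weight~$0$ precisely when $\mu=1$ and $v_{2i-1}$ has weight~$0$ precisely when $\nu=1$, which is exactly when these two vertices are absent from~$\vv{i}{\mu\nu}$). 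Hence $G^{\mu\nu}_i$ is a blow-up of $\vv{i}{\mu\nu}$ in the sense of Section~\ref{sec:1431} all of whose vertex classes are non-empty.

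Second, I would count the vertices: $v(G^{\mu\nu}_i)=\sum_{z}\o(z)=\o\bigl(V(\vv{i}{\mu\nu})\bigr)$, which by~\eqref{eq:2212} and the definition~\eqref{eq:k} of $k$ equals $3k-1$. Third, for $k$-regularity I would note that every vertex lying in the class~$Z$ associated with $z\in V(\vv{i}{\mu\nu})$ has, in the blow-up, the neighbourhood $N(Z)=\bigcup_{z'\in\Nn(z)}Z'$ and therefore degree $\sum_{z'\in\Nn(z)}\o(z')=\o\bigl(\Nn(z)\bigr)$; by~\eqref{eq:2227} this equals~$k$ for \emph{every} $z\in V(\vv{i}{\mu\nu})$, so $G^{\mu\nu}_i$ is $k$-regular. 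Combining the three observations gives the fact.

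The main obstacle is not this final assembly but one step earlier, namely the ``quick calculations'' underlying~\eqref{eq:2212} and~\eqref{eq:2227}: the totals $\omega_{00}\bigl(V(\Upsilon_i)\bigr)=27i-19$ and $f\bigl(V(\Upsilon_i)\bigr)=g\bigl(V(\Upsilon_i)\bigr)=3$, and the neighbourhood sums $\omega_{00}\bigl(\Nn(z)\bigr)=9i-6$ together with the case distinctions for $f\bigl(\Nn(z)\bigr)$ and $g\bigl(\Nn(z)\bigr)$. The fiddly one is $\omega_{00}\bigl(\Nn(z)\bigr)=9i-6$ for \emph{all}~$z$: I would verify it by running through the vertex types $x,y,a,b,c,u,v,w$ and the three blocks $\{v_0,\dots,v_{i-1}\}$, $\{v_i,\dots,v_{2i-1}\}$, $\{v_{2i},\dots,v_{3i-2}\}$ of the central Andr\'asfai graph~$\G_i$, using the adjacency rules from the construction of~$\vv{i}{00}$ and the fact that each~$v_j$ has exactly~$i$ neighbours inside~$\G_i$ (Fact~\ref{fact:A1}\ref{it:1112c}), and checking in each case that the incident $\omega_{00}$-weights sum to~$9i-6$; the corrections $-\mu f-\nu g$ then only perturb this on~$\Nn(y)$ and~$\Nn(v_{2i-1})$, which explains why~$k$ takes the stated form. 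Since these are finitely many explicit, tabulated checks, I would simply carry them out type by type rather than look for a slicker argument.
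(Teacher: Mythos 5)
Your proof is correct and follows essentially the same route the paper intends: the statement is declared a ``summary'' of~\eqref{eq:2212} and~\eqref{eq:2227} (with~\eqref{eq:0959} supplying that the blow-up is proper), and you read off the vertex count and the $k$-regularity from exactly those identities, adding a reasonable plan for verifying the underlying ``quick calculations.''
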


Finally, as in the case of Andr\'asfai graphs, we characterize all extremal Vega graphs.

\begin{dfn}\label{def:exvega}
	Given natural numbers $k\ge 10$ and $n\ge s\ge 1$ the family $\cH^n_k(s)$ is the smallest
	collection of blow-ups of Vega graphs with the following properties. 	
	\begin{enumerate}[label=\alabel]
		\item\label{it:1211a} If  $s = kn/(3k-1)$, then $G^{\mu\nu}_i(3s-n)\in \cH^n_k(s)$
			whenever $k=9i-(6+\mu+\nu)$.
		\item\label{it:1211b} If $kn/(3k-1) < s \le (k-1)n/(3k-4)$, then 
			$\Upsilon^{0\nu}_i\bigl((3s-n)\o-\lambda f\bigr)\in \cH^n_k(s)$ whenever $k=9i-(6+\nu)$.
		\item\label{it:1211c} If $kn/(3k-1) < s \le (k-1)n/(3k-4)$, then 
			$\Upsilon^{\mu0}_i\bigl((3s-n)\o-\lambda g\bigr)\in \cH^n_k(s)$ whenever $k=9i-(6+\mu)$.
	\end{enumerate}
\end{dfn}

Observe that for
\begin{itemize}
	\item $s\notin [kn/(3k-1) , (k-1)n/(3k-4)]$ the family $\cH^n_{k}(s)$ is empty;
	\item $k\nequiv 1,2,3\pmod{9}$ the family $\cH^n_{k}(s)$ is empty;
	\item $k\equiv 1\pmod{9}$ we have $\mu + \nu =2$ and the family $\cH^n_{k}(s)$ is nonempty if 
		and only if $s=nk/(3k-1)$, in which case it only consists of the 
		graph $G^{11}_{(k+8)/9}(3s-n)$;
	\item $k\equiv 2, 3\pmod{9}$ and $s=nk/(3k-1)$ there are one or two graphs 
		in $\cH^n_{k}(s)$ as described in~\ref{it:1211a}.
	\item $k\equiv 2, 3\pmod{9}$ and $kn/(3k-1) < s \le (k-1)n/(3k-4)$ the family 
		$\cH^n_{k}(s)$ consists of two graphs, one of which is as described in~\ref{it:1211b}  
		while the other one is as described in~\ref{it:1211c}.
\end{itemize}

\begin{lemma}\label{l:vegaextremal}
	Given integers $i\ge 2$, $\mu, \nu \in \{0,1\}$, and $n\ge s\ge 1$, let $H$ be a blow-up 
	of $\ve$ on $n$ vertices. If 
	\[
		\alpha(H)\le s
		\quad \text{ and } \quad 
		e(H)=g_k(n,s)
	\]
	hold for $k=9i-(6+\mu+\nu)$, then $H$ is isomorphic to a graph in the 
	family~$\cH^n_{k}(s)$.
\end{lemma}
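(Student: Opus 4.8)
The plan is to mimic the structure of the proof of Lemma~\ref{lem:2130}, using Corollary~\ref{blowupedges} as the workhorse but now with $F=G^{\mu\nu}_i$, which by Fact~\ref{f:G1} is a $k$-regular blow-up of $\ve$ on $3k-1$ vertices with $k=9i-(6+\mu+\nu)$. First I would reduce to the situation where $H$ is actually a blow-up of $G^{\mu\nu}_i$: since $H$ is a blow-up of $\ve$ and $G^{\mu\nu}_i=\ve(\o)$, a sufficiently divisible balanced blow-up of $H$ is simultaneously a blow-up of $G^{\mu\nu}_i$, and scaling $n$, $s$ and all vertex-class sizes by the same factor changes neither the hypotheses nor the conclusion (the target families $\cH^n_k(s)$ scale accordingly). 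So assume $H$ has vertex classes $Z$ indexed by $z\in V(\ve)$, with $|N(Z)|\le s$ coming from triangle-freeness and $\alpha(H)\le s$. To invoke Corollary~\ref{blowupedges} I need the lower bound $|Z|\ge (k-1)n-(3k-4)s$ for every class; this is the analogue of Fact~\ref{Vi} for Vega graphs, and I would either cite it from the proof of Lemma~\ref{l:vegaex} (where the same bound must implicitly be established to make Corollary~\ref{blowupedges} applicable) or reprove it by the same neighbourhood-counting identities $|Z|=n-\bigl(\text{sum of a few }|N(\cdot)|\bigr)$ that were used there, exploiting the explicit structure of $\ve$. Granting this, $e(H)=g_k(n,s)$ forces the three conclusions \ref{it:2140a}–\ref{it:2140c} of Corollary~\ref{blowupedges}: we get $kn/(3k-1)\le s\le (k-1)n/(3k-4)$, some class has size exactly $x:=(k-1)n-(3k-4)s$, and every class strictly larger than $x$ has neighbourhood of size exactly $s$.

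The core of the argument is then a case analysis driven by the equation $s=kn/(3k-1)$ versus $s>kn/(3k-1)$, exactly as in Definition~\ref{def:exvega}. In the boundary case $s=kn/(3k-1)$ one has $x=3s-n$ equal to the average class size, so $H$ is forced to be the balanced blow-up $G^{\mu\nu}_i(3s-n)$, giving case~\ref{it:1211a}; here one should also note that a blow-up of a Vega graph $\ve$ can coincide with a balanced blow-up of $G^{\mu\nu'}_{i'}$ for a different parameter only in the degenerate ways already recorded in the bulleted remarks after Definition~\ref{def:exvega}, but for the purposes of the lemma it suffices that the resulting graph lies in $\cH^n_k(s)$. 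In the generic case $s>kn/(3k-1)$ I would use clause~\ref{it:2140c}: pick a class $Z$ with $|Z|>x$ (one exists since not all classes can have size $x$ when $s>kn/(3k-1)$, as their sum would be too small) and write out $|N(Z)|=s$ as a sum of class sizes over $N(z)$ in $\ve$. Combining this with the upper bound $|Z'|\le 3s-n$ — the Vega analogue of the upper bound in Fact~\ref{Vi}, again proved by the same three-term neighbourhood identity — pins down a large block of classes to have size exactly $3s-n$. Propagating these equalities around the graph using the adjacency structure of $\ve$ and the relations among the various $|N(\cdot)|$, one should be able to show that the vector of class sizes is exactly $(3s-n)\o-\lambda f$ or $(3s-n)\o-\lambda g$ for the appropriate free parameter $\lambda$, which is precisely what \ref{it:1211b} and~\ref{it:1211c} assert; the weight functions $f$ and $g$ from~\eqref{eq:1016} reappear here because they are exactly the perturbations of $\o$ that preserve the regularity/neighbourhood-sum conditions, so they parametrise the one-dimensional family of extremal size vectors once a class has been pushed above $x$.

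The step I expect to be the main obstacle is the propagation argument in the generic case: unlike the Andr\'asfai graph, which is vertex-transitive and has a clean cyclic structure making the "knock-on" equalities in the proof of Lemma~\ref{lem:2130} almost automatic, the Vega graph $\ve$ has several orbit types of vertices ($x$, $y$, the outer $6$-cycle $avcubw$, and the inner $\G_i$), and one must carefully track, for each class, which of the bounds $|Z|\ge x$ or $|Z|\le 3s-n$ is tight and how tightness propagates along edges. Concretely, I would organise this by first showing that once \emph{one} inner class $V_j$ has size $>x$, the neighbourhood equations force all of $\G_i$'s classes into the pattern $(3s-n)\o_{\mathrm{inner}}-\lambda(\text{restriction of }f\text{ or }g)$, then handle the six outer-cycle classes and $x,y$ by their explicit adjacencies to the inner part, and finally check that exactly one of the two templates ($f$ or $g$) is consistent with the location of the class that exceeded $x$. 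The bookkeeping is routine but lengthy; the key structural inputs are (i) the neighbourhood-sum formulas already computed in~\eqref{eq:2227}, (ii) the fact from~\eqref{eq:0959} that the support of $\o$ is all of $V(\ve)$ so every template class is genuinely present, and (iii) the observation that $f$ and $g$ each have zero-sum on every neighbourhood up to the exceptional vertex, which is why subtracting $\lambda f$ or $\lambda g$ keeps all neighbourhood sizes at $s$ except at one distinguished class. I would close by simply remarking that the size vectors obtained are exactly those listed in Definition~\ref{def:exvega}, so $H\in\cH^n_k(s)$. \qed
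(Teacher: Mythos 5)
Your overall plan matches the paper's (reduce to a $G^{\mu\nu}_i$-blow-up, apply Corollary~\ref{blowupedges}, split on $\lambda=0$ versus $\lambda\ge 1$, then propagate equalities), but the crucial middle step is missing. The paper does \emph{not} start the propagation from an arbitrary class with $|Z|>x$ as you propose; it first proves Claim~\ref{clm:1315}: there is a vertex $z\in V(\ve)\cap\{x,y,v_0,v_{2i-1}\}$ with $\o(z)=1$ whose class is at the \emph{minimum} size $3s-n-\lambda$. This is what singles out whether you are in template~\ref{it:1211b} ($z\in\{x,y\}$, hence $\mu=0$, subtract $\lambda f$) or template~\ref{it:1211c} ($z\in\{v_0,v_{2i-1}\}$, hence $\nu=0$, subtract $\lambda g$). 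Proving the claim is the inventive part: one either has $\o(z)=1$ directly, or $\lambda=1$, and in the latter case the argument counts how many vertices can have $|Z|<\o(z)(3s-n)$ (at most three) and uses $\chi(\ve)=4$ to find a vertex $z_\star$ whose neighbourhood avoids all of them, contradicting $|N(Z_\star)|\le s$. That chromatic-number argument has no analogue in your sketch, and without knowing which special vertex is pinned at the minimum your ``knock-on'' propagation has no well-defined starting point: a class with $|Z|>x$ only tells you $|N(Z)|=s$, which by itself does not distinguish the $f$-pattern from the $g$-pattern.

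Two smaller issues. First, your opening reduction — passing to a divisible balanced blow-up $H(p)$ so that it becomes an honest $G^{\mu\nu}_i$-blow-up, then scaling back — is not what the paper does (it uses the ``particle'' decomposition inside each class, which needs no divisibility) and is under-justified: you would have to argue that $H(p)\cong \ve\bigl(p\bigl((3s-n)\o-\lambda f\bigr)\bigr)$ forces $H\cong\ve\bigl((3s-n)\o-\lambda f\bigr)$, i.e.\ some form of uniqueness of the blow-up representation. Second, the propagation itself, once the distinguished vertex is known, is carried out in the paper by repeatedly running the proof of Fact~\ref{Vegadown} for various pairs $\{z',z''\}$ with equality in~\eqref{eq:1831}, not by pinning classes at $3s-n$ via a single $|N(Z)|=s$ equation; your description is close in spirit but would need to be reorganised around the equality analysis of Fact~\ref{Vegadown}.
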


\subsection{Proof of Lemma~\ref{l:vegaex}.}
Throughout this subsection, we fix some integers $i\ge 2$, $\mu, \nu\in\{0, 1\}$,
and $n\ge s\ge 1$. We keep using the weight function $\o\colon V(\vl)\lra\ZZ_{\ge 0}$
and the natural number $k$ defined in~\eqref{eq:1016} and~\eqref{eq:k}, respectively. 
Let $H$ be an $n$-vertex blow-up of $\Upsilon^{\mu\nu}_i$ such that $\alpha(H)\le s$.
It will be convenient to view $H$ as a blow-up of $\vl$ as well by adding an empty 
vertex class $Y$ in case $\mu=1$ and an empty $V_{2i-1}$ in case $\nu=1$. 
The independence of the neighbourhoods of the vertices in $\vl$ entails  
	\begin{equation}\label{eq:nh}
		|N(Z)|\le s
	\end{equation}
for every vertex class $Z\subseteq V(H)$ corresponding to a vertex $z\in \vl$
(even if this vertex $z$ fails to belong to $\ve$.)

We first bound the size of the vertex classes of $H$ from above and below. 
The ideas in the proofs of the two following facts are similar to those in 
the proof of Fact~\ref{Vi}. 

\begin{fact}\label{Vegaup}
	Every vertex $z\in V(\vl)$ with $\o (z)\ge 2$ satisfies		
	\[
		|Z|\le \o(z)(3s-n)\,.
	\]
	Moreover, we have 
	\begin{align*}	
		|X| + |Y| &\le 2(3s-n)=\o(\{x, y\})(3s-n) \\
		\text{ and } |V_0| + |V_{2i-1}| &\le 2(3s-n)=\o(\{v_0, v_{2i-1}\})(3s-n)\,.
	\end{align*}
\end{fact}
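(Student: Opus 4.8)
The plan is to follow the recipe behind the proof of Fact~\ref{Vi}: there, every estimate on $|V_i|$ came from writing $|V_i|$ as a signed combination of the quantities $|N(V_j)|$ and $|V_j|$ and then feeding in the independence bound $|N(V_j)|\le s$ together with crude bounds on the class sizes. The one new feature is that here the combinations have to be weighted according to $\o$. Concretely, given a vertex $z\in V(\vl)$ I would aim to produce a map $\phi\colon V(\vl)\lra\ZZ_{\ge 0}$, supported on $V(\ve)$ and of total mass $\sum_{z'}\phi(z')=3\,\o(z)$, with the property that for every $w\in V(\ve)$ the number of neighbours of $w$ counted with the multiplicities $\phi$ equals $\o(z)+\Ind[w=z]$. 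Given such a $\phi$, a double count shows
\[
	\sum_{z'\in V(\vl)}\phi(z')\,|N(Z')|=\o(z)\,n+|Z|\,,
\]
since each class $W$ is counted $\o(z)$ times --- plus once more when $w=z$ --- and $\sum_{w\in V(\ve)}|W|=n$; as the left-hand side is at most $3\,\o(z)\,s$ by~\eqref{eq:nh}, rearranging gives the desired $|Z|\le\o(z)(3s-n)$ (and, incidentally, $3s\ge n$). For the two ``moreover'' pairs one proceeds in the same way, treating $\{x,y\}$ and $\{v_0,v_{2i-1}\}$ as ``super-vertices'' of weight $2$: one looks for a $\phi$ of total mass $6$ under which every vertex of $\ve$ is counted twice, except the two vertices of the pair, which are counted three times, and then the same computation gives $2n+|X|+|Y|=\sum_{z'}\phi(z')\,|N(Z')|\le 6s$, and likewise with $V_0,V_{2i-1}$ in place of $X,Y$.

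Building these maps $\phi$ is where the structure of the Vega graph enters, and this is the step I expect to be the real work. By the weight table in Figure~\ref{fig:Vega_w} it suffices to treat the six ``hexagon'' vertices $a,b,c,u,v,w$ (which carry the weights of order $i$), the Andr\'asfai vertices $v_j$ with $j\ne 0,2i-1$ (of weight at most $3$), and the two pairs $\{x,y\}$ and $\{v_0,v_{2i-1}\}$ --- once the pair bounds are known, the only remaining generic claims, namely those for $v_0$ and $x$ when these have weight $2$, follow by dropping a non-negative summand from the respective pair inequality. For a vertex $v_j$ one can start from the three-neighbourhood configuration underlying Fact~\ref{Vi}: the sets $\Nn_{\G_i}(v_{j-i})$, $\Nn_{\G_i}(v_j)$, $\Nn_{\G_i}(v_{j+i})$ cover $V(\G_i)$ with $v_j$ doubled, and one then raises every multiplicity to its target value by adjoining appropriate numbers of copies of the hexagon vertices (and, where needed, of $x,y$). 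For the hexagon vertices and the two pairs, $\phi$ is read off directly from the adjacency pattern of Figure~\ref{fig:Vega}, using inclusions such as $\Nn(a)\supseteq\{v_0,\dots,v_{i-1}\}$.

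The genuine obstacle is that all this bookkeeping has to be carried out uniformly in $\mu$ and $\nu$: each of $\mu,\nu$ shifts several of the vertex weights by $\pm 1$ and, when it equals $1$, deletes $y$ (respectively $v_{2i-1}$) from the graph, so the multiset counts defining the various $\phi$'s must absorb these perturbations simultaneously in all four sub-cases. The identities $\o\bigl(V(\ve)\bigr)=3k-1$ from~\eqref{eq:2212} and $\o\bigl(\Nn(z)\bigr)=k$ from~\eqref{eq:2227} --- equivalently, Fact~\ref{f:G1} --- are the natural consistency checks that these counts are internally coherent. Finally, in the write-up one may, exactly as in the proof of Fact~\ref{Vi}, dispense with the vector $\phi$ altogether and simply display, for each $z$, the resulting identity exhibiting $|Z|$ (or $|X|+|Y|$, or $|V_0|+|V_{2i-1}|$) as a short signed combination of the form $\sum_{z'}\phi(z')\,|N(Z')|-\o(z)\,n$.
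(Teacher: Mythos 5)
Your reduction is exactly the one the paper uses: you reformulate the desired bound $|Z|\le\o(z)(3s-n)$ as the existence of a nonnegative function $h^z$ on $V(\vl)$ with $\Sigma(h^z)=3\o(z)$ and $\Nf(h^z)=\o(z)\cdot\Ind+\Ind(z)$ on $V(\ve)$, and the double count you describe is verbatim the paper's chain of (in)equalities. You also correctly single out $\Ind(\ccC_6)$ and $\Ind(a,b,u,v,v_{i-1},v_i)$ as the witnesses for the moreover-bounds on $|X|+|Y|$ and $|V_0|+|V_{2i-1}|$, and observe that the weight-$2$ cases $z=x$ (when $\mu=1$) and $z=v_0$ (when $\nu=1$) follow because $|Y|$, respectively $|V_{2i-1}|$, is then zero.

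However, the proposal defers precisely what constitutes the bulk of the paper's argument: the explicit construction, uniformly in $(\mu,\nu)\in\{0,1\}^2$, of $h^z$ for the hexagon vertices $z\in\{a,b,c,u,v,w\}$ and for the Andr\'asfai vertices $v_j$ with $j\ne 0,2i-1$. Your sketch for $v_j$ (hexagon indicator plus the three Andr\'asfai neighbourhoods) does match the paper's choice $h^{v_k}=\Ind(\ccC_6)+\Ind(v_{k-i},v_k,v_{k+i})$, but you omit the $-\nu g$ correction that is needed when $k=i-1$, and the claim that the hexagon cases can simply be ``read off'' from the adjacency pattern is optimistic: the actual witnesses, such as $h^a=(i-1)\Ind(\ccC_6)+\Ind(\G)+\Ind(a,x)-\Ind(v_{2i-1})$ and its analogues for $b,c,u,v,w$, together with the corrections $-\mu f$ and $-\nu g$ that keep the functions nonnegative and correctly supported across all four sub-cases, each require separate verification via the identities \eqref{eq:1043}, \eqref{eq:1203}, and \eqref{eq:1051}. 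Without these constructions and checks the fact is set up but not proved.
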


\begin{proof}
	The upper bound on $|X| +|Y|$ follows from  
	\begin{multline*}
		|X| +|Y| + 2n
		 = 
		|N(A)| + |N(V)| + |N(C)| + |N(U)| + |N(B)|+|N(W)| 
		\overset{\eqref{eq:nh}}{\le}6s\,.
	\end{multline*}
	Similarly,  
	\begin{multline*}
		|V_0| + |V_{2i-1}| + 2n
		 = 
		|N(A)| + |N(B)| + |N(U)| + |N(V)| + |N(V_{i-1})|+|N(V_i)| 
		\overset{\eqref{eq:nh}}{\le}
		6s
	\end{multline*}
	yields the desired upper bound on $|V_0| + |V_{2i-1}|$. It remains to prove 
	$|Z|\le \o(z)(3s-n)$ for every $z\in V(\vl)\sm\{v_0, v_{2i-1}, x, y\}$.
	Following the same strategy we have just used, this task reduces to exhibiting 
	a list of $3\o(z)$ vertices of $\Upsilon_i$ whose neighbourhoods cover the 
	entire vertex set $\o(z)$ many times and the set $Z$ itself even once more.
	As there are several cases and plenty of vertices to consider, it seems useful
	to devise the following notation. For a set $Q\subseteq V(\vl)$ we denote
	its characteristic function by $\Ind(Q)$, so
	\[
		\Ind(Q)(q)=
			\begin{cases}
				1 & \text{ if $q\in Q$} \cr
				0 & \text{ otherwise} \cr
			\end{cases}
	\]
	for all $q\in V(\vl)$.
	If $Q=\{q_1, \ldots, q_r\}$ comes with 
	an explicit enumeration of its members, it will be convenient to omit a pair of curly
	braces, thus writing $\Ind(q_1, \ldots, q_r)$ for this function. For instance, 
	the functions $f$ and~$g$ considered earlier can now be represented as 
	\[
		f=\Ind(u, v, w, y)-\Ind(x)
		\quad \text{ and } \quad 
		g=\Ind(b, v, v_{i-1}, v_{2i-1})-\Ind(v_0)\,.
	\]
	Instead of $\Ind\bigl(V(\vl)\bigr)$ we shall just write $\Ind$.
	Next, given a function $h\colon V(\vl)\lra\ZZ$ 
	we let $\Sigma(h)=h(V(\vl))=\sum_{t\in V(\vl)}h(t)$ 
	be the sum of the values it attains
	and by $\Nf(h)\colon V(\vl)\lra\ZZ$ we mean the function
	$\sum_{t\in V(\vl)}h(t)\,\Ind\bigl(\Nn(t)\bigr)$. In other words, this function satisfies 
	\begin{equation}\label{eq:1205}
		\Nf(h)(t)=\sum_{t'\in \Nn(t)} h(t') = h\bigl(\Nn(t)\bigr)
		\quad \text{ for every } t\in V(\vl)\,.
	\end{equation}
	So we used earlier that 
	\begin{align}
		\Nf(f)&=\Ind+\Ind(y)\,, & \Sigma(f)&=3\,, \notag \\
		\text{and } \quad \Nf(g)&=\Ind+\Ind(v_{2i-1})\,, &  \Sigma(g)&=3 \,, \label{eq:1051} 
	\end{align}
	the upper bound on $|X|+|Y|$ relies on the fact that the hexagon $\ccC_6=\{a,v,c,u,b,w\}$
	has the properties 
	\begin{equation}\label{eq:1043}
		\Nf\bigl(\Ind(\ccC_6)\bigr)=2\cdot \Ind+\Ind(x, y)\,,
		\quad
		\Sigma\bigl(\Ind(\ccC_6)\bigr)=|\ccC_6|=6
	\end{equation}
	and soon we are going to need that the inner Andr\'asfai graph 
	$\G=\{v_0,v_1,\dots, v_{3i-2}\}$ satisfies 
	\begin{equation}\label{eq:1203}
		\Nf\bigl(\Ind(\G)\bigr)=i\cdot \Ind(\G)+(i-1)\cdot \Ind(\ccC_6)+\Ind(a, b, u, v)\,,
		\qquad
		\Sigma\bigl(\Ind(\G)\bigr)=|\G|=3i-1\,.
	\end{equation}

	Now it suffices to exhibit for every vertex $z\in V(\vl)\sm\{v_0, v_{2i-1}, x, y\}$
	a function 
	\[
		h^z\colon V(\vl)\lra \ZZ_{\ge 0}
	\]
	such that 
	\begin{enumerate}[label=\alabel]
		\item\label{it:0111a} $\Nf(h^z)$ agrees on $V(\Upsilon^{\mu\nu}_i)$
			with $\o(z)\cdot \Ind+\Ind(z)$ 
		\item\label{it:0111b} and $\Sigma(h^z)=3\o(z)$.
	\end{enumerate}
	Indeed, once we have such a function $h^z$ at our disposal, we can imitate the 
	above reasoning and argue that 
	\begin{align*}
		\o(z)n+|Z|
		&\overset{\text{\ref{it:0111a}}}{=}
		\sum_{t\in V(\vl)}\Nf(h^z)(t) |T|
		\overset{\eqref{eq:1205}}{=} 
		\sum_{t\in V(\vl)} \sum_{t'\in \Nn(t)} h^z(t') \,|T| \\
		&= 
		\sum_{t'\in V(\vl)} h^z(t')\, |N(T')|
		\overset{\eqref{eq:nh}}{\le}  
		\Sigma(h^z) s
		\overset{\text{\ref{it:0111b}}}{=}
		3\o(z) s\,,
	\end{align*}
	which proves the desired inequality $|Z|\le \o(z)(3s-n)$.
	
	Starting with the vertices in $\G$ we observe that for every 
	$k\in \ZZ_{3i-1}\sm\{0, 2i-1\}$ we have 
	\[
		\Nf\bigl(\Ind(v_{k-i}, v_k, v_{k+i})\bigr)=\Ind(\ccC_6)+\Ind(\G)+\Ind(v_k)\,.
	\]
	By adding~\eqref{eq:1043} we infer 
	\begin{equation}\label{eq:1212}
		\Nf\bigl(\Ind(\ccC_6)+\Ind(v_{k-i}, v_k, v_{k+i})\bigr)=3\cdot\Ind+\Ind(v_k)\,,
	\end{equation}
	which shows that for $k\ne 0, i-1, 2i-1$ the function 
	$h^{v_k}=\Ind(\ccC_6)+\Ind(v_{k-i}, v_k, v_{k+i})$ has the required properties.
	Moreover, for $k=i-1$ we deduce from~\eqref{eq:1212} with the help 
	of~\eqref{eq:1051} that 
	\[
		\Nf\bigl(\Ind(\ccC_6)+\Ind(v_{i-1}, v_{2i-1}, v_{3i-2})-\nu g\bigr)
		=
		(3-\nu)\Ind+\Ind(v_{i-1})-\nu\Ind(v_{2i-1})\,,
	\]
	meaning that we can take $h^{v_{i-1}}=\Ind(\ccC_6)+\Ind(v_{i-1}, v_{2i-1}, v_{3i-2})
	-\nu g$. It remains to deal with the vertices on the hexagon. Starting with $a$, 
	we observe 
	\begin{equation}\label{eq:1246}
		\Nf\bigl(\Ind(a, x)-\Ind(v_{2i-1})\bigr)=\Ind(a, c, w, x, y)\,,
	\end{equation}
	which in view of~\eqref{eq:1043} and~\eqref{eq:1203} entails that the function
	\[
		h^a=(i-1)\Ind(\ccC_6) +\Ind(\Gamma)+\Ind(a, x)-\Ind(v_{2i-1})
	\]
	satisfies 
	\begin{multline*}
		\Nf(h^a)
		=[(2i-2)\cdot \Ind+(i-1)\cdot \Ind(x, y)] 
		+[i\cdot\Ind(\G)+(i-1)\cdot\Ind(\ccC_6) +\Ind(a, b, u, v)] \\
		+ \Ind(a, c, w, x, y) =(3i-2)\cdot\Ind+\Ind(a)\,.
	\end{multline*}
	Together with $\Sigma(h^a)=6(i-1)+(3i-1)+2-1=3(3i-2)$ this proves that $h^a$ 
	has all required properties. By symmetry, for 
	$\widetilde{h}^b=(i-1)\cdot\Ind(\ccC_6) +\Ind(\Gamma)+\Ind(b, x)-\Ind(v_{0})$
	we obtain $\Nf(\widetilde{h}^b)=(3i-2)\Ind+\Ind(b)$ and 
	$\Sigma(\widetilde{h}^b)=3(3i-2)$,
	so by~\eqref{eq:1051} we may set $h^b=\widetilde{h}^b-\nu g$.
	Next, 
	\begin{equation}\label{eq:1415}
		\Nf\bigl(\Ind(x)-\Ind(v_0, v_{2i-1}, w)\bigr)
		=
		\Ind(c)-\Ind(a, b, u, v)-\Ind(\G)
	\end{equation}
	reveals that the function 
	$h^c=(i-1)\cdot\Ind(\ccC_6)+\Ind(\G)+\Ind(x)-\Ind(v_0, v_{2i-1}, w)$
	has the property 
	\begin{multline*}
		\Nf(h^c)=[(2i-2)\cdot\Ind+(i-1)\cdot\Ind(x, y)]
			+[i\cdot\Ind(\G)+(i-1)\cdot\Ind(\ccC_6)+\Ind(a, b, u, v)] \\
			+[\Ind(c)-\Ind(a, b, u, v)-\Ind(\G)]  = (3i-3)\Ind+\Ind(c)
	\end{multline*}
	and because of $\Sigma(h^c)=6(i-1)+(3i-1)+1-3=3(3i-3)$ this establishes 
	the desired bound $|C|\le (3i-3)(3s-n)$. 
	Utilising that similar to~\eqref{eq:1246},~\eqref{eq:1415} we 
	have 
	\begin{align*}
		\Nf\bigl(\Ind(u, y)-\Ind(v_{2i-1})\bigr) &=\Ind(c, u, w, x, y) \\
		\Nf\bigl(\Ind(v, y)-\Ind(v_{0})\bigr) &= \Ind(c, v, w, x, y) \\
		\text{ and } \quad \Nf\bigl(\Ind(y)-\Ind(c, v_0, v_{2i-1})\bigr)
			&=\Ind(w)-\Ind(a, b, u, v)-\Ind(\G)
	\end{align*}
	one confirms easily that the functions 
	\begin{align*}
		h^u &=(i-1)\Ind(\ccC_6)+\Ind(\G)+\Ind(u, y)-\Ind(v_{2i-1})-\mu f \\
		h^v &=(i-1)\Ind(\ccC_6)+\Ind(\G) +\Ind(v, y)-\Ind(v_0)-\mu f-\nu g \\
		\text{ and } \quad 
		h^w &=(i-1)\Ind(\ccC_6)+\Ind(\G)+\Ind(y)-\Ind(c, v_0, v_{2i-1})-\mu f
	\end{align*}
	take care of the three remaining vertex classes.
\end{proof}

Adding all inequalities provided by Fact~\ref{Vegaup}
we obtain $n\le (3k-1)(3s-n)$ (recall~\eqref{eq:2212} and~\eqref{eq:k}), 
whence  
\[
	\frac{kn}{(3k-1)}\le s
\]
and the number  
\begin{equation}\label{eq:lamda}
	\lambda 
	=
	(3k-1)s-kn 
	= 
	k(3s-n) - s
\end{equation}
is nonnegative. For later use we rewrite this in the form 
\begin{equation}\label{eq:1709}
	n=(3k-1)(3s-n)-3\lambda
	\overset{\eqref{eq:2212}}{=} \o\bigl(V(\vl)\bigr)(3s-n)-3\lambda\,.
\end{equation}
Similarly,~\eqref{eq:nh} and~\eqref{eq:2227} yield 
\begin{equation}\label{eq:vs}
	|N(Z)| \le s = k(3s-n) - \lambda = \o\bigl(N(z)\bigr)(3s-n)-\lambda
\end{equation}
for every vertex $z\in V(\Upsilon^{\mu\nu}_i)$. Now we are ready for a lower bound on the sizes
of the vertex classes of $H$. 

\begin{fact}\label{Vegadown}
	If $z\in V(\vl)$, then
	\begin{equation}\label{vegalow}
			|Z|\ge \o(z)(3s-n)-\lambda\,.
	\end{equation}
\end{fact}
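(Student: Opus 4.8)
The plan is to prove~\eqref{vegalow} by a partition argument of the same kind as the lower-bound half of Fact~\ref{Vi}. Two degenerate cases need no work: if $z=y$ and $\mu=1$, or if $z=v_{2i-1}$ and $\nu=1$, then $Z=\emptyset$ while $\o(z)=0$, so~\eqref{vegalow} reads $0\ge-\lambda$, which holds since $\lambda\ge 0$. For every other vertex $z$ the idea is to choose two distinct vertices $p, q\in V(\ve)$ whose neighbourhoods $N(p)$, $N(q)$ in $\vl$ are disjoint and avoid $z$. Then $\{z\}$, $N(p)$, $N(q)$ and $W:=V(\vl)\sm\bigl(\{z\}\cup N(p)\cup N(q)\bigr)$ partition $V(\vl)$, and one arranges in addition that $W$ contains both or neither of $x, y$ and both or neither of $v_0, v_{2i-1}$.

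Granting such a pair $p, q$, the partition of $V(\vl)$ yields the identity
\[
	n=|Z|+|N(P)|+|N(Q)|+\sum_{t\in W}|T|\,.
\]
Here $|N(P)|$ and $|N(Q)|$ may be bounded from above by~\eqref{eq:vs} --- this is exactly where $p, q\in V(\ve)$ is used --- and $\sum_{t\in W}|T|$ by Fact~\ref{Vegaup}, the combined inequalities there taking care of whichever of $x, y, v_0, v_{2i-1}$ lie in $W$ (which is why the condition on $W$ was imposed). Thus
\[
	n\le|Z|+(3s-n)\bigl(\o(N(p))+\o(N(q))+\o(W)\bigr)-2\lambda\,.
\]
Since $\o$ is supported on $V(\ve)$ with total weight $\o\bigl(V(\ve)\bigr)=3k-1$ by~\eqref{eq:2212}, additivity of $\o$ over the partition gives $\o(N(p))+\o(N(q))+\o(W)=(3k-1)-\o(z)$. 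Feeding this together with $n=(3k-1)(3s-n)-3\lambda$ from~\eqref{eq:1709} into the last display, the terms $(3k-1)(3s-n)$ cancel and one is left with exactly $|Z|\ge\o(z)(3s-n)-\lambda$.

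The remaining --- and only genuine --- task is to produce, for each vertex $z$ of $\ve$, a pair $p, q$ with the three required properties (disjoint neighbourhoods, both avoiding $z$, and the stated condition on the complement $W$), uniformly in $i$ and in $\mu, \nu\in\{0,1\}$. This proceeds by a case analysis paralleling the one behind Fact~\ref{Vegaup}, according to whether $z$ is an inner Andr\'asfai vertex $v_j$, a hexagon vertex among $a, b, c, u, v, w$, or one of the apices $x, y$. For orientation: if $z=v_j$ with $j\ne 0, 2i-1$ and $\mu=0$ one may take $\{p, q\}=\{x, y\}$, whose neighbourhoods $\{a, b, c, y\}$ and $\{u, v, w, x\}$ are disjoint, miss $v_j$, and leave $W\subseteq\{v_0,\dots,v_{3i-2}\}$; and if $z=x$ one may take $p=u$, $q=v_0$, leaving $W=\{v, w\}\cup\{v_{2i},\dots,v_{3i-2}\}$. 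The hexagon vertices and the remaining ranges of $j$ (and values of $\mu, \nu$) are handled by similar, occasionally fussier, choices. No idea beyond those already present in the proof of Fact~\ref{Vegaup} is needed, so I expect this final verification to be lengthy but essentially routine.
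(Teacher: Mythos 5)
Your proposal reproduces the paper's proof almost exactly: the paper likewise picks, for each $z$, two adjacent non-neighbours $z',z''$ of $z$ in $\Upsilon_i^{\mu\nu}$ such that each of $\{x,y\}$ and $\{v_0,v_{2i-1}\}$ lies entirely inside or entirely outside $R=\{z\}\dcup\Nn(z')\dcup\Nn(z'')$, bounds $|S_H|$ for $S=V(\vl)\sm R$ via Fact~\ref{Vegaup}, bounds $|N(Z')|,|N(Z'')|$ via~\eqref{eq:vs}, and feeds in~\eqref{eq:1709}; the arithmetic is identical (your ``disjoint neighbourhoods'' requirement is equivalent to the paper's ``adjacent'' since $\ve$ is triangle-free and all that is needed is that $R$ is a disjoint union). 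The only place you are not quite there is the case analysis, which you defer: the paper settles it with an eight-row table, and the trick that makes the table uniform in $\mu,\nu$ is to draw $z',z''$ exclusively from $V(\Upsilon_i^{11})$, i.e.\ never to use $y$ or $v_{2i-1}$; your illustrative choice $\{p,q\}=\{x,y\}$ for $z=v_j$ violates this and indeed fails when $\mu=1$ (since then $y\notin V(\ve)$ and~\eqref{eq:vs} is unavailable for $N(Y)$), which is exactly why the paper takes $\{c,x\}$ for those inner vertices instead. So the approach is right and the remaining verification is genuinely routine (shorter than you anticipate), but a complete writeup must exhibit choices valid for all four $(\mu,\nu)$ simultaneously, as the paper's table does.
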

\begin{proof}
	As we shall prove by means of a complete case analysis, there exist two adjacent 
	non-neighbours $z'$, $z''$ of $z$ in $\Upsilon^{\mu\nu}_i$ such that each of the sets 
	$\{x, y\}$ and $\{v_0, v_{2i-1}\}$ is either contained in
	$\Nn(z')\cup \Nn(z'')\cup \{z\}$ or in its complement. 
	
	Once we have two such vertices $z'$ and $z''$, the argument proceeds as follows. 
	Due to $z'z''\in E(\vl)$ and $zz', zz''\not\in E(\vl)$ the union 
	$R=\{z\}\dcup \Nn(z')\dcup \Nn(z'')$ is disjoint. 
	Let $S=V(\vl)\sm R$ be 
	the complement of $R$ and write 
	\[
		R_H=Z\dcup N(Z')\dcup N(Z'')
		\quad \text{ as well as } \quad
		S_H=V(H)\sm R_H
	\]
	for the sets corresponding to $R$ and $S$ in $H$. Fact~\ref{Vegaup} implies 
	\begin{equation}\label{eq:1831}
		|S_H|\le \o(S)(3s-n)
	\end{equation}
	and, consequently, we have 
	\begin{align*}
		|Z| &+|N(Z')|+|N(Z'')|
		=
		|R_H|
		=
		n-|S_H| \\
		&\overset{\eqref{eq:1709}}{\ge} 
		\o\bigl(V(\vl)\bigr)(3s-n)-3\lambda-\o(S)(3s-n) 
		= 
		\o(R)(3s-n)-3\lambda \\
		&\overset{\eqref{eq:vs}}{\ge} 
		\o(z)(3s-n)-\lambda+|N(Z')|+|N(Z'')|\,,
	\end{align*}
	i.e., $|Z|\ge \o(z)(3s-n)-\lambda$. So it remains to check that the auxiliary vertices 
	$z'$ and $z''$ do indeed exist and we list some possible choices in the table that
	follows.
	
	\begin{center}
	\begin{tabular}{|c|c|c|c|c|c|c|c|c|}
	\hline 
	$z$&  $v_0$&  $v_1, \ldots, v_{2i-2}$&
	$v_{2i-1}$&  $v_{2i}, \ldots, v_{3i-2}, a, v$&  $b, u$&
	$c, w$&  $x$&  $y$ \\ \hline
 	$z'$, $z''$&  $c$, $v$&  $c$, $x$&
	$c$, $u$&  $b$, $u$&  $a$, $v$&
	$v_0$, $v_i$&  $u$, $v_0$&  $a$, $v_0$ \\ \hline 
\end{tabular}
\end{center}
	This concludes the proof of Fact~\ref{Vegadown}.
\end{proof}

We are left with the task of proving Lemma~\ref{l:vegaex} itself. To this end we remark 
that~$H$ can be regarded as a blow-up of the graph $G^{\mu\nu}_i$ considered in 
Fact~\ref{f:G1}. In fact, every vertex~$z$ of~$\Upsilon^{\mu\nu}_i$ corresponds to 
some vertex class $Z$ of $H$ and due to $\o(z)\ge 1$ we can partition each such vertex 
class into $\o(z)$ ``particles'' of size $\lfloor|Z|/\o(z)\rfloor$ or 
$\lceil|Z|/\o(z)\rceil$ each. Owing to Fact~\ref{Vegadown} the sizes of these particles 
are at least 
\begin{align}\label{eq:226}
	\left\lfloor\frac{|Z|}{\o(z)}\right\rfloor
	&\overset{\eqref{vegalow}}{\ge}
	\left\lfloor\frac{\o(z)(3s-n)-\lambda}{\o(z)}\right\rfloor
	=
	3s-n-\left\lceil\frac{\lambda}{\o(z)}\right\rceil \\
	&\ge 3s-n-\lambda
	\overset{\eqref{eq:lamda}}{=}
	(k-1)n-(3k-4)s\,.\nonumber
\end{align}
As the particles endow $H$ with the structure of a blow-up of $G^{\mu\nu}_i$,
Corollary~\ref{blowupedges} shows that we have indeed
\[
	e(H) \le g_k(n,s)\,.
\]
This concludes the proof of Lemma~\ref{l:vegaex}. \hfill \qed

\subsection{Proof of Lemma \ref{l:vegaextremal}.}

Corollary~\ref{blowupedges}\ref{it:2140a} yields
\[
	\frac{kn}{3k-1} \le s \le \frac{(k-1)n}{3k-4}\,,
\]
whence $0 \le \lambda \le 3s-n$. In the special case $\lambda=0$ the Facts~\ref{Vegaup} 
and~\ref{Vegadown} imply that every vertex $z\in V(\ve)$ satisfies $|Z|=\o(z)(3s-n)$, 
meaning that statement~\ref{it:1211a} holds. From now on we suppose $\lambda\ge 1$, 
which entails the estimate on $s$ occurring in~\ref{it:1211b} and~\ref{it:1211c}.

\begin{claim}\label{clm:1315}
	There exists a vertex $z\in V(\ve)\cap\{x, y, v_0, v_{2i-1}\}$ such that $\o(z)=1$ and 
	$|Z| = 3s-n-\lambda$. 
\end{claim}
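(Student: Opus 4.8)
The plan is to extract the required vertex from the extremal hypothesis $e(H)=g_k(n,s)$ by tracing through the equality cases of the lemmata already proved, together with a careful look at the cycle of inequalities used to establish Facts~\ref{Vegaup} and~\ref{Vegadown}. Since $\lambda\ge 1$, the strict inequality $(3k-1)s\ne kn$ holds, so Corollary~\ref{blowupedges} applies in full: $H$ contains a vertex class of size exactly $(k-1)n-(3k-4)s$, and every class larger than that has neighbourhood of size exactly $s$. The particle decomposition used in the proof of Lemma~\ref{l:vegaex} turns $H$ into a blow-up of $G^{\mu\nu}_i$ whose particle sizes are bounded below by $3s-n-\lceil\lambda/\o(z)\rceil$; for $e(H)$ to be as large as $g_k(n,s)$, the slack in~\eqref{eq:226} must vanish, which forces a class $Z$ with $\lceil\lambda/\o(z)\rceil=\lambda$, i.e.\ with $\o(z)=1$ (using $\lambda\ge1$). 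The four vertices of $\vl$ with weight $1$ are exactly $x$, $y$, $v_0$, $v_{2i-1}$, so such a $z$ lies in $\{x,y,v_0,v_{2i-1}\}$; and since it corresponds to an actual vertex class of $H$, it lies in $V(\ve)$, giving $z\in V(\ve)\cap\{x,y,v_0,v_{2i-1}\}$.

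First I would make precise the statement that equality in the conclusion of Lemma~\ref{l:vegaex} forces equality throughout its proof. The chain there runs: sum the bounds of Fact~\ref{Vegaup} to get $n\le(3k-1)(3s-n)$; use Fact~\ref{Vegadown} to bound each particle size from below; then invoke Corollary~\ref{blowupedges}. If $e(H)=g_k(n,s)$, then in the application of Corollary~\ref{blowupedges} to the particle blow-up we must have a particle of size exactly $(k-1)n-(3k-4)s=3s-n-\lambda$ (clause~\ref{it:2140b}). Looking at~\eqref{eq:226}, a particle of class $Z$ has size at least $3s-n-\lceil\lambda/\o(z)\rceil$, and this is $3s-n-\lambda$ only if $\lceil\lambda/\o(z)\rceil=\lambda$. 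Since $\lambda\ge1$, this equality holds precisely when $\o(z)=1$. Hence the class $Z$ achieving the minimal particle size has $\o(z)=1$, and moreover $|Z|=\o(z)(3s-n)-\lambda=3s-n-\lambda$ exactly, because $\lfloor|Z|/\o(z)\rfloor=\lfloor|Z|\rfloor=|Z|$ when $\o(z)=1$ and the bound~\eqref{vegalow} must then be tight.

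The remaining point is that $z$ genuinely belongs to $\ve$, not merely to $\vl$. This is immediate: vertex classes of $H$ that were added artificially (the empty $Y$ when $\mu=1$, the empty $V_{2i-1}$ when $\nu=1$) have size $0$, whereas $|Z|=3s-n-\lambda=(k-1)n-(3k-4)s\ge0$; if this quantity is positive we are done, and if it equals $0$ we are in the boundary case $s=(k-1)n/(3k-4)$, where one can argue separately that $z$ can be chosen among the weight-$1$ vertices actually present. Collecting the weight-$1$ vertices of $\Upsilon_i$ from the table defining $\o_{00}$ (and noting $f,g$ do not create new weight-$1$ vertices nor destroy the relevant ones), they are exactly $x$, $y$, $v_0$, $v_{2i-1}$, which gives the claimed membership $z\in V(\ve)\cap\{x,y,v_0,v_{2i-1}\}$.

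The main obstacle I anticipate is bookkeeping in the equality analysis: one must verify that if $e(H)=g_k(n,s)$ then \emph{some} particle attains the lower bound $(k-1)n-(3k-4)s$ \emph{and} that this particle comes from a weight-$1$ class rather than merely from a weight-$\ge2$ class whose slack $\lceil\lambda/\o(z)\rceil$ happens to be smaller than $\lambda$ but still leaves room. In fact Corollary~\ref{blowupedges}\ref{it:2140b} only guarantees a particle of size exactly $(k-1)n-(3k-4)s$, and~\eqref{eq:226} shows any class $Z$ with $\o(z)\ge2$ has all particles of size $\ge 3s-n-\lceil\lambda/\o(z)\rceil > 3s-n-\lambda$ whenever $\lambda\ge 2$; the edge case $\lambda=1$ needs the separate observation that then $\lceil\lambda/\o(z)\rceil=1=\lambda$ for \emph{every} $z$, but a weight-$\ge2$ class still cannot have a particle as small as $3s-n-\lambda$ unless the floor in~\eqref{eq:226} actually drops, which forces $\o(z)\mid\lambda$ and hence, combined with the exact value, still pins down a size-one class after a short argument. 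Handling these low-$\lambda$ subcases cleanly is where the care is needed; everything else is a direct reading of the earlier results.
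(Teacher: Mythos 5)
Your reduction to the equality analysis via the particle decomposition is the right starting point and matches the paper's strategy, but the crucial case $\lambda=1$ is not handled correctly, and the fix you sketch in your last paragraph does not work. The problem is this: from $\lceil\lambda/\o(z)\rceil=\lambda$ and $\lambda\ge 1$ you conclude $\o(z)=1$, but when $\lambda=1$ we have $\lceil 1/\o(z)\rceil=1=\lambda$ for \emph{every} $\o(z)\ge 1$. You acknowledge this, and then claim that a weight-$\ge 2$ class still cannot have a particle of size $3s-n-\lambda$ because the floor dropping ``forces $\o(z)\mid\lambda$.'' That implication is false. For $\lambda=1$ and $\o(z)\ge 2$, Fact~\ref{Vegadown} gives $|Z|\ge\o(z)(3s-n)-1$; if equality holds, then
$\bigl\lfloor |Z|/\o(z)\bigr\rfloor=\bigl\lfloor (3s-n)-1/\o(z)\bigr\rfloor=3s-n-1=3s-n-\lambda$,
so such a class genuinely has a particle of the extremal size. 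Nothing in Corollary~\ref{blowupedges} or in~\eqref{eq:226} rules this out, so your chain of deductions does not pin down a weight-one class when $\lambda=1$.

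The paper handles $\lambda=1$ by an entirely different contradiction argument that you would need to supply: assuming no weight-one vertex $z\in V(\ve)$ satisfies $|Z|=3s-n-\lambda$, Facts~\ref{Vegaup} and~\ref{Vegadown} give $|Z|\le\o(z)(3s-n)$ for every $z\in V(\ve)$, and summing against~\eqref{eq:1709} shows the set $M$ of classes where the inequality is strict has $|M|\le 3$. Since $\ve$ is triangle-free, each $\Nn(m)$ is independent, so if the three sets $\Nn(m)$, $m\in M$, covered $V(\ve)$ we would get $\chi(\ve)\le 3$, contradicting the $4$-chromaticity of Vega graphs. Hence there is a vertex $z_\star$ with $\Nn(z_\star)\cap M=\emptyset$, giving $|N(Z_\star)|=k(3s-n)>s$ and contradicting~\eqref{eq:vs}. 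This use of $\chi(\ve)=4$ is the missing idea; without it, the claim is not established for $\lambda=1$, and the subsequent case analysis in Lemma~\ref{l:vegaextremal} has no valid starting point. Your remaining remarks (why the resulting $z$ lies in $\{x,y,v_0,v_{2i-1}\}$ and in $V(\ve)$) are fine: $\o(z)=1$ identifies those four candidate vertices, and any vertex with positive weight lies in the support of $\o$, which is $V(\ve)$ by~\eqref{eq:0959}.
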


\begin{proof}
	Recall that in the proof of Lemma~\ref{l:vegaex} we viewed $H$ as a blow-up of 
	the $k$-regular graph $G_i^{\mu\nu}$. By Corollary~\ref{blowupedges} at least
	one of the particles occurring in this construction has the 
	size $(k-1)n-(3k-4)s = (3s-n) - \lambda$. Let $z\in V(\ve)$ be a vertex one of 
	whose~$\o(z)$ particles has this size. Now~\eqref{eq:226} needs to hold with 
	equality and we have $\lceil \lambda/\o(z)\rceil=\lambda$. For this reason 
	at least one of the equations $\o(z)=1$ or $\lambda=1$ is true.  
	In the former case, $z\in \{x, y, v_0, v_{2i-1}\}$ is immediate and we are done. 
	
	Now suppose for the sake of contradiction that $\lambda=1$ 
	and that $|Z|\ne 3s-n-\lambda$ holds for every $z\in V(\ve)\cap\{x, y, v_0, v_{2i-1}\}$ 
	with $\o(z)=1$. Together with the Facts~\ref{Vegaup} and~\ref{Vegadown} this 
	yields $|Z|\le \o(z)(3s-n)$ for all vertices $z\in V(\ve)$. 
	
	Concerning the set $M=\{z\in V(\ve)\colon |Z| < \o(z)(3s-n)\}$ of vertices for which 
	this estimate fails to be sharp we can deduce from 
	\[
		 n=\sum_{z\in V(\ve)} |Z| \le \o\bigl(V(\ve)\bigr)(3s-n) -|M|
	\]
	and~\eqref{eq:1709} that $|M|\le 3$. Owing to $\chi(\ve)=4$ this shows that the neighbourhoods
	of the vertices in $M$ cannot cover the entire vertex sets of $\ve$ or, in other words, that 
	there is a vertex $z_\star\in V(\ve)$ whose neighbourhood is disjoint to $M$.	
	But now 
	\[
		s
		\ge 
		|N(Z_\star)| 
		= 
		\sum_{z\in \Nn(z_\star)}|Z|
		= 
		\sum_{z\in \Nn(z_\star)}\o(z)(3s-n)
		=
		k(3s-n)
	\]
	contradicts~\eqref{eq:vs}. This completes the proof of Claim~\ref{clm:1315}. 
\end{proof}

Let us observe that if the vertex $z$ delivered by Claim~\ref{clm:1315} is either $x$ or $y$, 
then $\mu=0$, while if it is one of $v_0$, $v_{2i-1}$, then $\nu=0$. 
	
\smallskip

{\it \hskip2em First Case. We have $\mu=0$ and one of $X$, $Y$ has size $3s-n-\lambda$.}
	
\medskip

The product of the four transpositions $x\longleftrightarrow y$, $a\longleftrightarrow u$,
$b\longleftrightarrow v$, and $c\longleftrightarrow w$ is an automorphism of $\ve$ and, 
since we only aim at determining $H$ up to isomorphism this fact shows that without loss 
of generality we may suppose $|Y|=3s-n-\lambda$.

Now for $y$ in place of $z$ the proof of Fact~\ref{Vegadown} goes through with equality.
In particular, if we work with the pair $\{z', z''\}=\{a, v_0\}$ indicated in the table,
we need to have equality in~\eqref{eq:1831}, which in turn implies in view of Fact~\ref{Vegaup}
that 
\begin{equation}\label{eq:equal}
	|Z|=\o(z)(3s-n)
\end{equation}
holds for all $z\in \{b, c, v_{2i}, \ldots, v_{3i-2}\}$. Working with the 
pair $\{a, v_{i-1}\}$ or $\{b, v_i\}$ instead we learn that~\eqref{eq:equal}
is also valid for all $z\in \{v_i, \ldots, v_{2i-2}\}$ and 
all $z\in \{a, v_1, \ldots, v_{i-1}\}$. Altogether, this proves~\eqref{eq:equal}
for all $z\ne v_0, v_{2i-1}, u, v, w, x, y$.

Now $|N(V_{2i})|\le s=\o\bigl(\Nn(v_{2i})\bigr)(3s-n)-\lambda$ and Fact~\ref{Vegadown}
yield 
\[
	|W|=\o(w)(3s-n)-\lambda\,, 
\]
meaning for $z=w$ the estimates entering the 
proof of Fact~\ref{Vegadown} hold with equality as well. Applying this observation  
to $\{v_0, v_i\}$ playing the r\^ole of $\{z', z''\}$ and to~\eqref{eq:1831} we 
conclude $|X|+|Y|=2(3s-n)$, which in turn entails $|X|=(3s-n)+\lambda$.
Thus $|N(C)|\le s$ and Fact~\ref{Vegadown} are only compatible if $|Z|=\o(z)(3s-n)-\lambda$
holds for $z=u, v$ as well.
 
Finally, $|V_i|>\o(v_i)(3s-n)-\lambda$ and Corollary~\ref{blowupedges}\ref{it:2140c} yield 
$|N(V_i)|=s$, whence 
\[
	|V_0|=\o(v_0)(3s-n)\,. 
\]
The same argument applied to $V_{i-1}$
discloses $|V_{2i-1}|=\o(v_{2i-1})(3s-n)$ and altogether this concludes the proof
that $|Z|=\o(z)(3s-n)-\lambda f(z)$ holds for every $z\in V(\ve)$, i.e., 
that $H$ is as described in~\ref{it:1211b}.
 
\smallskip

{\it \hskip2em Second Case. We have $\nu=0$ and one of $V_0$, $V_{2i-1}$ 
has size $3s-n-\lambda$.}
	
\medskip
	
We will show that outcome~\ref{it:1211c} of our lemma occurs. The argument will 
be very similar to the one we saw in previous case. First, we note that
the reflection $v_j\longmapsto v_{2i-1-j}$ of $\Gamma$ composed with the transpositions 
$a\longleftrightarrow b$, $u\longleftrightarrow v$ constitutes an automorphism of~$\ve$
that exchanges $v_0$ and $v_{2i-1}$. Thus we may suppose without loss of generality 
that $|V_{2i-1}|=3s-n-\lambda$. 

As before, we need to have equality in~\eqref{eq:1831} when running the proof of
Fact~\ref{Vegadown} for $z=v_{2i-1}$ and $\{z', z''\}$ being one of the pairs $\{a, x\}$, 
$\{a, w\}$, or $\{c, u\}$. Consequently,~\eqref{eq:equal} holds whenever 
$z\in \{v_i, \ldots, v_{2i-2}, v_{2i}, \ldots, v_{3i-2}, a, c, u, w\}$. 
Now $|N(V_{3i-2})|\le s$ and Fact~\ref{Vegadown} yield 
\[
	|V_{i-1}|=\o(v_{i-1})(3s-n)-\lambda\,.
\]

This equality case of Fact~\ref{Vegadown} can be analysed by using the 
pair $\{c, x\}$ in place of $\{z', z''\}$. 
In this manner we infer that~\eqref{eq:equal} holds for 
$z\in\{v_1, \ldots, v_{i-2}\}$ as well and, moreover, that 
$|V_0|=(3s-n)+\lambda$. Together with $|N(V_i)|\le s$ and~Fact~\ref{Vegadown}
this establishes $|Z|=\o(z)(3s-n)-\lambda$ for $z=b, v$ and it remains to 
check~\eqref{eq:equal} for $z=x, y$. To this end, we observe that $|A|>\o(a)(3s-n)-\lambda$ 
and Corollary~\ref{blowupedges}\ref{it:2140c} yield $|N(A)|=s$, whence $|X|=\o(x)(3s-n)$. 
The argument for $Y$ is similar but considers $U$ instead of~$A$. \qed

\section{Proofs of the main results}
\label{sec:2141}

The main ingredient of our argument is a result of Brandt and Thomass\'e~\cite{BT} 
which states that all maximal triangle-free graphs of large minimum degree are blow-ups 
of Andr\'asfai and Vega graphs. A graph $G$ is \emph{maximal triangle-free} if adding 
any missing edge to $G$ creates a triangle.

\begin{theorem}\label{th:dense}
	Every maximal triangle-free graph on $n$ vertices with minimum degree greater 
	than $n/3$ is a proper blow-up of some Andr\'asfai graph or Vega graph. 
\end{theorem}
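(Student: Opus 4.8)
The assertion is the structural classification of Brandt and Thomass\'e~\cite{BT}; a self-contained proof is long, so the plan below is only a road-map of its architecture, with an indication of the main obstacle. The first step is a reduction to \emph{reduced} graphs. Declaring two vertices of $G$ to be \emph{twins} when they have the same neighbourhood, one observes that each twin class is independent (by triangle-freeness) and that $G$ is a proper blow-up of the graph $\widehat G$ obtained by contracting twin classes to single vertices. One then checks that $\widehat G$ is again maximal triangle-free --- a non-edge of $\widehat G$ lifts to a non-edge between non-twins of $G$, which by maximality has a common neighbour, and this descends --- and that $\widehat G$ carries positive integer weights $h$ (the twin-class sizes) under which every vertex has weighted neighbourhood-mass $\sum_{\widehat u\in N(\widehat v)}h(\widehat u)$ larger than $\tfrac13 h(V(\widehat G))=\tfrac n3$. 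It then suffices to prove that every reduced maximal triangle-free graph admitting such a weighting is isomorphic to an Andr\'asfai graph $\G_k$ or a Vega graph $\ve$; the blow-up $G$ is then automatically of the asserted form. (That $\G_k$ and each $\ve$ do admit such weightings is immediate for $\G_k$ from regularity, and for $\ve$ from~\eqref{eq:2212} and~\eqref{eq:2227}.)

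The core of the argument is to extract rigid local structure from the interplay of triangle-freeness, edge-maximality, and the density hypothesis. Fixing a vertex $v$ and writing $V=\{v\}\cup N(v)\cup M(v)$ with $M(v)$ the set of remaining non-neighbours, triangle-freeness makes $N(v)$ independent while maximality forces each $w\in M(v)$ to have a neighbour in $N(v)$; carrying this analysis to second neighbourhoods and to the independent sets $N(w)$ with $w\in M(v)$, and double counting against $\delta(G)>n/3$, one aims to show that the ``distance profile'' of $G$ is so constrained that $G$ must fall into one of two regimes. Either $G$ is forced to carry the structure of a Cayley graph on a cyclic group with an interval connection set --- in which case the homomorphism to $\ZZ_{3k-1}$ recorded in the remark following~\eqref{eq:1200}, the properties in Fact~\ref{fact:A1}, and maximality (forbidding $G$ from being a proper subgraph of a blow-up of $\G_k$) identify $G$ with some $\G_k$ --- or a single explicitly describable obstruction is present, namely an induced hexagon $avcubw$ whose three colour classes attach to the three thirds of an inner Andr\'asfai block $\G_i$, together with the extra vertices $x$ and possibly $y$. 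In that second regime one shows the hexagon forces the full inner $\G_i$, then the attachment pattern of the Vega construction, and finally --- again playing maximality against the degree bound --- that $G$ is exactly one of $\Upsilon_i^{00}$, $\Upsilon_i^{10}$, $\Upsilon_i^{01}$, $\Upsilon_i^{11}$.

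The hard part is precisely this dichotomy: establishing that \emph{no} configuration beyond those two survives triangle-freeness, edge-maximality, and $\delta(G)>n/3$ simultaneously demands a long and delicate case analysis, and this is the portion I would import from~\cite{BT} rather than reconstruct here --- which is also why the present article uses Theorem~\ref{th:dense} as a black box in Section~\ref{sec:2141}.
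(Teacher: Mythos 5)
The paper does not prove Theorem~\ref{th:dense}; it is cited as a result of Brandt and Thomass\'e~\cite{BT} and invoked as a black box in Section~\ref{sec:2141}, so there is no in-paper proof to compare against. Your proposal correctly recognises this, and the road-map you give (twin reduction to a weighted maximal triangle-free graph, then the Andr\'asfai/Vega dichotomy, with the hard case analysis explicitly imported from~\cite{BT}) is a sound outline that is consistent with the paper's own deference to that source.
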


Next we recall~\cite{LPR}*{Fact 2.6}, which will allows us to restrict to the 
`correct' range of $s$ when proving Theorem~\ref{thm:mindeg}.

\begin{fact}\label{f:s}
	If $n\ge s \ge 1$ and $k\ge 2$ are such that 
	${s\notin \bigl(\frac{k}{3k-1}n, \frac{k-1}{3k-4}n\bigr)}$, then 
	\[
	\ex(n,s)\le g_k(n, s)
	\]
\end{fact}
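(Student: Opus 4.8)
The plan is to reduce everything to the \emph{trivial bound} $\ex(n,s)\le ns/2$ recalled in the introduction, and then to compare $g_k(n,s)$ with $ns/2$ by treating both as quadratic polynomials in $s$ for fixed $n$.

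Concretely, using the definition~\eqref{eq2} of $g_k$ one computes
\[
	g_k(n,s)-\frac{ns}{2}
	=\frac12\Bigl[(3k-4)(3k-1)s^2-\bigl(2k(3k-4)+1\bigr)ns+k(k-1)n^2\Bigr],
\]
and I claim this factors as
\[
	g_k(n,s)-\frac{ns}{2}
	=\frac{(3k-4)(3k-1)}{2}\left(s-\frac{k}{3k-1}\,n\right)\left(s-\frac{k-1}{3k-4}\,n\right).
\]
To verify the factorisation it suffices to check that the two candidate roots have the correct sum and product, i.e. that $k(3k-4)+(k-1)(3k-1)=2k(3k-4)+1$ (both sides equal $6k^2-8k+1$) and that $\frac{k}{3k-1}\cdot\frac{k-1}{3k-4}=\frac{k(k-1)}{(3k-4)(3k-1)}$; alternatively one substitutes $s=\frac{k}{3k-1}n$ and $s=\frac{k-1}{3k-4}n$ and checks that the bracket vanishes. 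I would also record that $\frac{k}{3k-1}<\frac{k-1}{3k-4}$, which is equivalent to $k(3k-4)<(k-1)(3k-1)$, i.e. to $0<1$, so the two roots are distinct and ordered as in the statement.

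Finally, since $k\ge 2$ the leading coefficient $(3k-4)(3k-1)$ is positive, so the parabola $s\mapsto g_k(n,s)-ns/2$ is nonnegative exactly outside the open interval delimited by its two roots, that is, whenever $s\le\frac{k}{3k-1}n$ or $s\ge\frac{k-1}{3k-4}n$. Hence for every $s\notin\bigl(\frac{k}{3k-1}n,\frac{k-1}{3k-4}n\bigr)$ we obtain $g_k(n,s)\ge ns/2\ge\ex(n,s)$, which is the assertion. The only substantive point is to recognise that the roots of $g_k(n,s)-ns/2$ are precisely the endpoints of the excluded interval; after that the conclusion is immediate, and the sole place the hypothesis $k\ge 2$ is used is the positivity of $(3k-4)(3k-1)$, so I do not anticipate any genuine obstacle here.
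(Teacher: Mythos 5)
Your proof is correct. The paper itself does not reprove this statement but merely recalls it from \cite{LPR}*{Fact 2.6}, so there is no in-text argument to compare against; however, your computation is exactly the natural one and is almost certainly what that cited proof does. The key identity
\[
	g_k(n,s)-\frac{ns}{2}
	=\frac{(3k-4)(3k-1)}{2}\left(s-\frac{k}{3k-1}\,n\right)\left(s-\frac{k-1}{3k-4}\,n\right)
\]
checks out (both the sum $6k^2-8k+1$ and the product $k(k-1)/\bigl((3k-1)(3k-4)\bigr)$ of the roots agree with the coefficients), the leading coefficient is positive for $k\ge 2$, the two roots are ordered as claimed, and combining this with the trivial bound $\ex(n,s)\le ns/2$ recorded in the introduction gives the assertion. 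One small stylistic suggestion: you could note explicitly that the endpoints of the closed complement are included because a parabola vanishes at its roots, so $g_k(n,s)\ge ns/2$ with equality there, which still suffices. Otherwise the argument is complete and self-contained.
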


We are now ready for the proof of our second main result. 

\begin{proof}[Proof of Theorem~\ref{thm:mindeg}]
	Observe that adding any edges to $H$ can neither increase its independence number 
	nor decrease its minimum degree. Thus, we may and shall assume that~$H$ is a maximal
	triangle-free graph.
	
	Due to Fact~\ref{f:s}, it suffices to consider the case
	\[
		\Delta(H) \le \alpha(H) \le s < \frac{k-1}{3k-4}n\,.
	\]
	Since $\delta(H) > (k+1)n/(3k+2) > n/3$, Theorem~\ref{th:dense} tells us 
	that $H$ is a proper blow-up of some graph $J$, which is either
	an Andr\'asfai graph $\Gamma_{\ell}$, or a Vega graph $\ve$. 
	In the latter case we set $\ell=9i-(6+\mu+\nu)$. Now in both cases $J$ has 
	a proper $\ell$-regular blow-up on $3\ell-1$ vertices and Lemma \ref{l:kregular} 
	yields $k = \ell$. 
	If $J=\Gamma_k$ is an Andr\'asfai graph the assertion follows from Lemma~\ref{ag0} 
	and if $J$ is a Vega graph we use Lemma~\ref{l:vegaex} instead.
\end{proof}

The other main result follows by means of a vertex deletion argument.
 
\begin{proof}[Proof of Theorem~\ref{thm:main}]
	Define   
	\[
		\zeta=\frac{1}{8k^2}
		\quad\text{ and } \quad
		\gamma=\frac{1}{600k^6}\,.
	\]
	Consider a triangle-free graph $G$ on $n$ vertices with $\alpha(G)\le s$ 
	and $e(G)=\ex(n,s)$, where 
	\begin{equation}\label{eq:1704}
		\frac{k}{3k-1}n \le  s \le \left(\frac{k}{3k-1}+\gamma\right)n\,.
	\end{equation}

	Since $\gamma$ is sufficiently small, we have $s<(k-1)n/(3k-4)$ and 
	Fact~\ref{f:s} implies $e(G)\le g_\ell(n, s)$ for every $\ell\ne k$. 
	On the other hand, Lemma~\ref{lem:2130} yields
	\begin{equation}\label{eq:0007}
		e(G)=\ex(n,s)\ge g_k(n,s)
	\end{equation}
	and it remains to prove that this holds with equality. It will be convenient to 
	rewrite~\eqref{eq:0007} as
	\[
		e(G)
		\ge 
		g_k(n,s)
		=
		\tfrac12\bigl[ns-\bigl((k-1)n -(3k-4)s\bigr)\bigl((3k-1)s - kn\bigr)\bigr]\,.
	\]
	Since
	\[
		0 \le (3k-1)s-kn < 3k\gamma n
	\]
	and 
	\[
		(k-1)n-(3k-4)s \le \frac{(3k-1)(k-1)}k s-(3k-4)s =\frac sk\,,
	\]
	we have 
	\[
		\bigl((k-1)n-(3k-4)s\bigr) \bigl((3k-1)s-kn\bigr)<3\gamma ns\,,
	\]
	and thus
	\[
	ns-2e(G) < 3\gamma ns\,.
	\]

	Therefore, the set 
	\begin{equation}\label{eq:Adef}
		A=\left\{v\in V(G)\colon d(v) < \left(1-\zeta\right)s\right\}
	\end{equation}
	satisfies 
	\[
	\zeta s|A|\le \sum_{v\in V}\left(s-d(v)\right)=ns-2e(G) < 3\gamma ns
	\]
	and, consequently, 
	\begin{equation}\label{eq:0027}
	|A| < \frac{3\gamma n}\zeta =\frac n{25k^4}\,.
	\end{equation}

	Now our argument will proceed as follows. We shall verify 	
	that the graph $G'=G-A$ satisfies the assumptions of Theorem~\ref{thm:mindeg} 
	and, hence, $e(G')$ is bounded from above by $g_k(n-|A|,s)$. Then, using the 
	fact that all vertices in $A$ are of small degree, we derive an upper bound 
	of $g_k(n,s)$ on the number of edges in $G$.
	
	For the minimum degree of $G'$ we obtain 
	\begin{align*}
		\delta(G') &\ge \left(1-\zeta\right)s-|A| 
		> \frac{k}{3k-1}n-\frac{n/2}{8k^2}-\frac{n}{25k^4}\\	
		&>\left(\frac{k+1}{3k+2}+\frac{1}{12k^2}-\frac 1{16k^2}-\frac 1{100k^2}\right)n \\
		&> \frac{k+1}{3k+2}n \ge \frac{k+1}{3k+2} |V(G')|\,.   
	\end{align*}
	As the graph $G'$ is triangle-free and satisfies $\alpha(G')\le s$, this shows that 
	Theorem~\ref{thm:mindeg} applies to~$G'$ and we are led to 
	\begin{equation}\label{eq:0028}
		e(G')\le g_k(n-|A|,s)
		=
		g_k(n, s)-|A|\left(k(k-1)n-\tfrac 12k(k-1)|A|-k(3k-4)s\right)\,.
	\end{equation}
	Now our choice of $\zeta$ and $\gamma$ yields 
	\begin{align*}
		\bigl((k-1)(3k-1)-\zeta\bigr)s
		&\le 
		\left((k-1)(3k-1)-\frac 1{8k^2}\right)\left(\frac k{3k-1}+\frac 1{600k^6}\right)n \\
		&<
		\left(k(k-1) +\frac 1{200k^4}-\frac 1{8k(3k-1)}\right)n \\
		&<
		\left(k(k-1) +\frac 1{50k^2}-\frac 1{25k^2}\right)n
		<k(k-1)\left(1-\frac 1{50k^4}\right)n \\
		&
		\overset{\eqref{eq:0027}}{<}
		k(k-1)(n-\tfrac 12 |A|)\,,
	\end{align*}
	and for this reason~\eqref{eq:0028} can be continued to 
	\begin{align*}
		e(G')&\le g_k(n, s)-\bigl((k-1)(3k-1)-\zeta-k(3k-4)\bigr)|A|s \\ 
		&= g_k(n,s) - (1-\zeta)|A|s\,.
	\end{align*}
	Every vertex in $A$ has degree at most $(1-\zeta)s$ in $G$, so we arrive at
	\begin{equation}\label{eq:0103}
		e(G)\le e(G')+(1-\zeta)|A|s\le  g_k(n, s)\,,
	\end{equation}
	which together with~\eqref{eq:0007} concludes the proof of Theorem~\ref{thm:main}.	
\end{proof}

Finally, let us remark that our results allow to determine the extremal graphs for 
Theorem~\ref{thm:main}.

\begin{cor}
	Suppose that $k\ge 2$ and that $G$ denotes a triangle-free graph on $n$ 
	vertices with $\alpha(G)\le s$ for some integer $s$ satisfying
	\[
		\frac{k}{3k-1}n \le  s \le \left(\frac{k}{3k-1}+\frac 1{600k^6}\right)n\,.
	\]
	If $e(G)=\ex(n, s)$, then $G$ is isomorphic to a graph in $\cG^n_k(s)\cup\cH^n_k(s)$.
\end{cor}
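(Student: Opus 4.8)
The plan is to combine the quantitative argument already carried out in the proof of Theorem~\ref{thm:main} with the structural Lemmas~\ref{lem:2130} and~\ref{l:vegaextremal}. Retaining the notation from the proof of Theorem~\ref{thm:main}, we start from a triangle-free graph $G$ on $n$ vertices with $\alpha(G)\le s$ and $e(G)=\ex(n,s)$, and we may assume $G$ is maximal triangle-free (passing to a maximal triangle-free supergraph changes neither $v(G)$, nor the bound $\alpha(G)\le s$, nor $e(G)$, since by Theorem~\ref{thm:main} we already know $e(G)=g_k(n,s)$ is the exact maximum). First I would re-run the degree-deficiency estimate: from $e(G)=g_k(n,s)$ and the two elementary inequalities $0\le(3k-1)s-kn<3k\gamma n$ and $(k-1)n-(3k-4)s\le s/k$ we get $ns-2e(G)<3\gamma ns$, so the set $A=\{v\colon d(v)<(1-\zeta)s\}$ again satisfies $|A|<n/(25k^4)$, and $G'=G-A$ again satisfies the hypotheses of Theorem~\ref{thm:mindeg} with the same $\delta(G')>\frac{k+1}{3k+2}|V(G')|$.

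The key new point is that the chain of inequalities~\eqref{eq:0028}--\eqref{eq:0103} must now be examined for \emph{equality}. Since $e(G)=g_k(n,s)$, every inequality in that chain is in fact an equality; in particular $e(G)=e(G')+(1-\zeta)|A|s$ forces, on the one hand, that every vertex of $A$ has degree \emph{exactly} $(1-\zeta)s$ in $G$, and on the other hand, reading off the equality in~\eqref{eq:0028} together with the strict inequality $((k-1)(3k-1)-\zeta)s<k(k-1)(n-\tfrac12|A|)$, that $|A|=0$. Thus $A=\emptyset$ and $G=G'$ is itself a maximal triangle-free graph with $\delta(G)>n/3$ and $e(G)=g_k(n,s)$. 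By Theorem~\ref{th:dense}, $G$ is a proper blow-up of some Andr\'asfai graph $\Gamma_\ell$ or some Vega graph $\Upsilon_i^{\mu\nu}$, and Lemma~\ref{l:kregular} (applied exactly as in the proof of Theorem~\ref{thm:mindeg}, with $d=k$, using that the relevant $\ell$-regular blow-up on $3\ell-1$ vertices exists by construction for Andr\'asfai graphs and by Fact~\ref{f:G1} for Vega graphs) pins down $\ell=k$, respectively $9i-(6+\mu+\nu)=k$.

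It then remains only to invoke the two characterisation lemmas. If $G$ is a blow-up of $\Gamma_k$ with $\alpha(G)\le s$ and $e(G)=g_k(n,s)$, then Lemma~\ref{lem:2130} gives $kn/(3k-1)\le s\le (k-1)n/(3k-4)$ and $G\cong$ some graph in $\cG^n_k(s)$; if $G$ is a blow-up of $\Upsilon_i^{\mu\nu}$ with $9i-(6+\mu+\nu)=k$, the same hypotheses let Lemma~\ref{l:vegaextremal} conclude $G\cong$ some graph in $\cH^n_k(s)$. In either case $G\in\cG^n_k(s)\cup\cH^n_k(s)$, which is the claim.

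The main obstacle I anticipate is the bookkeeping around the equality case: one has to be slightly careful that the inequality $((k-1)(3k-1)-\zeta)s<k(k-1)(n-\tfrac12|A|)$ from the proof of Theorem~\ref{thm:main} remains \emph{strict} so that equality in~\eqref{eq:0028} genuinely forces $|A|=0$ rather than merely constraining it — but this is exactly the strict inequality already displayed there, so no extra work is needed. A secondary subtlety is the reduction to the maximal triangle-free case: adding edges to an extremal $G$ could a priori increase $e$, but here we already know from Theorem~\ref{thm:main} that $\ex(n,s)=g_k(n,s)$ is attained and is an upper bound, so any maximal triangle-free supergraph $\widehat G\supseteq G$ with $\alpha(\widehat G)\le s$ still has $e(\widehat G)\le g_k(n,s)=e(G)$, hence $\widehat G=G$; thus $G$ was maximal to begin with. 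Everything else is a direct appeal to results proved earlier in the paper.
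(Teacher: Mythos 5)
Your proposal is correct and follows essentially the same route as the paper: force equality in the chain of estimates from the proof of Theorem~\ref{thm:main} to conclude $A=\emptyset$, then apply Theorem~\ref{th:dense} and Lemma~\ref{l:kregular} to identify the host graph, and finish by invoking Lemmas~\ref{lem:2130} and~\ref{l:vegaextremal}. One small slip worth noting: equality in $e(G)\le e(G')+(1-\zeta)|A|s$ cannot force vertices of $A$ to have degree \emph{exactly} $(1-\zeta)s$ --- by definition they have degree \emph{strictly} less than $(1-\zeta)s$, so that step already forces $A=\emptyset$ directly, which is also the conclusion your parallel argument via the strict inequality in~\eqref{eq:0028} delivers.
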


\begin{proof}
	Following the proof of Theorem~\ref{thm:main}, we see that~\eqref{eq:0103} 
	holds with equality, for which reason the set $A$ defined in~\eqref{eq:Adef}
	has to be empty. In other words, $G'=G$ and the proof of Theorem~\ref{thm:mindeg}
	discloses that $G'$ is a blow-up of either the Andr\'asfai graph $\Gamma_k$, or of 
   a Vega graph $\Upsilon^{\mu\nu}_i$ with $k=9i-(6+\mu+\nu)$. In the former case, 
   Lemma~\ref{lem:2130} shows that $G$ is isomorphic to a graph in $\cG^n_k(s)$
   and in the latter case we apply Lemma~\ref{l:vegaextremal}.  
\end{proof}

In fact, we strongly suspect that these are the only extremal graphs for the whole 
range of $s$, i.e., that the following stronger version of our initial conjecture holds.

\begin{conj}\label{conj:1859}
	If  $n/3<s\le n/2$, then 
	\begin{equation*}
		\ex(n,s)= \min_k g_k(n,s)\,,
	\end{equation*}
	where $g_k(n,s)$ is defined by~\eqref{eq2}. 
	Moreover, each extremal graph with $\ex(n,s)$ edges is isomorphic to one of the graphs 
	from the families~$\cG^n_k(s)$ and~$\cH^n_k(s)$ described in the Definitions~\ref{def:exan} 
	and~\ref{def:exvega}, respectively. 
\end{conj}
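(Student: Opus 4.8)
The plan is to keep the two-layer architecture of the present paper — a minimum-degree statement, and then a reduction of the general case to it by deleting low-degree vertices — but to strengthen each layer so that it covers the whole range $s/n\in(1/3,1/2)$.

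I would first dispose of the easy direction. For $s$ in the window $\tfrac{k}{3k-1}n\le s<\tfrac{k-1}{3k-4}n$ one checks, using Fact~\ref{f:G1} together with the arguments behind Fact~\ref{Vi} and Facts~\ref{Vegaup}--\ref{Vegadown}, that every member of $\cG^n_k(s)\cup\cH^n_k(s)$ is triangle-free, has independence number at most $s$, and has exactly $g_k(n,s)$ edges. This yields $\ex(n,s)\ge g_k(n,s)=\min_\ell g_\ell(n,s)$ and exhibits all the conjectured extremal graphs; what remains is the matching upper bound together with the claim that nothing else is extremal.

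For the upper bound I would next establish the sharpest possible minimum-degree statement: every triangle-free $H$ with $\alpha(H)\le s$ and $\delta(H)>n/3$ satisfies $e(H)\le\min_\ell g_\ell(n,s)$, with equality only for graphs in $\cG^n_k(s)\cup\cH^n_k(s)$. As in the proof of Theorem~\ref{thm:mindeg} one may assume $H$ maximal triangle-free, so Theorem~\ref{th:dense} makes it a proper blow-up of some Andr\'asfai graph $\Gamma_\ell$ or Vega graph $\Upsilon^{\mu\nu}_i$, and Lemmas~\ref{ag0},~\ref{l:vegaex},~\ref{lem:2130},~\ref{l:vegaextremal} already supply the bound $e(H)\le g_\ell(n,s)$ (with $\ell=9i-(6+\mu+\nu)$ in the Vega case) and the extremal classification for that particular index. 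The new difficulty is that, without the stronger hypothesis $\delta(H)>\tfrac{k+1}{3k+2}n$, Lemma~\ref{l:kregular} no longer pins the index $\ell$ to the minimiser $k$ determined by $s$: a proper blow-up of $\Gamma_\ell$ with $\alpha(H)\le s$ does force $s/n\ge\ell/(3\ell-1)$, but $\ell$ can be much larger than $k$, and then $g_\ell(n,s)$ vastly exceeds $g_k(n,s)$. Turning the weak bound $e(H)\le g_\ell(n,s)$ into the required $e(H)\le g_k(n,s)$ seems to demand exploiting the circular Cayley structure of $\Gamma_\ell$ and of the Vega graphs far more fully than the present ``a neighbourhood is a small independent set'' arguments do — for instance by showing that a blow-up of $\Gamma_\ell$ whose independence number is small relative to the $\ell$-window must have many negligible vertex classes, hence is effectively a blow-up of a much smaller Andr\'asfai graph. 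This is the first genuine obstacle; the authors expect it to be demanding but ultimately tractable, and it would already sharpen the constant $\gamma(k)$ in Theorem~\ref{thm:main}.

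The final, and to my mind principal, obstacle is the removal of the hypothesis $\delta(H)>n/3$ by an induction on $n$ that peels off one vertex $v$ with $\deg(v)\le n/3$ at a time: then $\alpha(G-v)\le s$ and $e(G)=e(G-v)+\deg(v)\le\min_\ell g_\ell(n-1,s)+\deg(v)$, and one hopes to conclude $e(G)\le g_k(n,s)$ from the identity $g_k(n,s)-g_k(n-1,s)=k\bigl((k-1)n-(3k-4)s\bigr)-\tfrac12 k(k-1)$. This closes the induction in a sub-window at the left end of each $k$-window, where $k\bigl((k-1)n-(3k-4)s\bigr)$ is close to $\tfrac{kn}{3k-1}>n/3$, and — after noticing that a deletion carries $s/(n-1)$ past $\tfrac{k-1}{3k-4}$ into the next window down, where the discrete derivative is again favourable — also in a sub-window at the right end; in the equality case the peeling must be lossless, which restores $\delta(G)>n/3$ and lets one invoke Lemmas~\ref{lem:2130} and~\ref{l:vegaextremal}. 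The open region is the middle-to-upper part of each window, where the gap $\tfrac{ns}2-\min_\ell g_\ell(n,s)$ is a non-negligible $\Theta(n^2/k^2)$ and the putative extremal graphs themselves already have minimum degree at most $n/3$, so that no single deletion is cheap and the peeling cannot leave an extremal graph intact. Overcoming this will, I suspect, require a structural theorem about triangle-free graphs whose minimum degree is only slightly above $n/3$ — something in the spirit of {\L}uczak's bounded-complexity result — combined with a stability argument that tracks the edge count and the approximate shape of $G$ in tandem, so that the exact extremal characterisation can be propagated all the way through the deletion process.
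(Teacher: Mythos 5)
What you were handed is Conjecture~\ref{conj:1859}, which the paper leaves open: the authors establish it only for $s/n$ in a short interval to the right of each cusp (Theorem~\ref{thm:main}) and, without restriction on $s$, only under the extra hypothesis $\delta(H)>\tfrac{k+1}{3k+2}n$ (Theorem~\ref{thm:mindeg}). So there is no proof in the paper to compare against, and your text --- which explicitly flags two of its steps as unresolved obstacles --- is a research plan rather than a proof. As a plan it is sound: the two-layer architecture (a minimum-degree statement obtained from Theorem~\ref{th:dense} together with the blow-up Lemmas~\ref{ag0},~\ref{l:vegaex},~\ref{lem:2130},~\ref{l:vegaextremal}, followed by a vertex-deletion reduction) is exactly how the paper derives Theorem~\ref{thm:main} from Theorem~\ref{thm:mindeg}, and the lower bound $\ex(n,s)\ge g_k(n,s)$ from the families $\cG^n_k(s)$ and $\cH^n_k(s)$ is likewise what the paper has in mind.

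Both obstacles you name are genuine. The first --- that with only $\delta(H)>n/3$, Lemma~\ref{l:kregular} no longer pins the Brandt--Thomass\'e index $\ell$ to the $k$ determined by $s$, so Lemmas~\ref{ag0} and~\ref{l:vegaex} return the bound $g_\ell(n,s)$ with possibly $\ell>k$, which exceeds $g_k(n,s)$ --- is precisely what the authors allude to at the end of the introduction when they say that proving Theorem~\ref{thm:mindeg} under $\delta(H)>n/3$ would take ``considerably more work'' and would only improve the numerics in Theorem~\ref{thm:main}, not lift the restriction on $s$. The second obstacle is the deeper one, and your arithmetic around it is correct: the discrete derivative $g_k(n,s)-g_k(n-1,s)=k\bigl((k-1)n-(3k-4)s\bigr)-\tfrac12k(k-1)$ exceeds $n/3$ only near the left cusp $s\approx\tfrac{kn}{3k-1}$, a single deletion carries $s/(n-1)$ into the $(k-1)$-window and rescues a sub-interval near the right cusp, but in the middle of each window a vertex of degree about $n/3$ is too expensive to peel off, and the extremal graphs themselves sit at exactly this minimum degree. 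Nothing in the paper addresses that middle range either. In short: nothing in your write-up is wrong, but, as you yourself state, the principal step is missing --- and that missing step is precisely why the statement is a conjecture rather than a theorem.
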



\begin{bibdiv}
\begin{biblist}

\bib{Aet}{article}{
   author={Allen, Peter},
   author={B\"{o}ttcher, Julia},
   author={Griffiths, Simon},
   author={Kohayakawa, Yoshiharu},
   author={Morris, Robert},
   title={The chromatic thresholds of graphs},
   journal={Adv. Math.},
   volume={235},
   date={2013},
   pages={261--295},
   issn={0001-8708},
   review={\MR{3010059}},
   doi={10.1016/j.aim.2012.11.016},
}
				
\bib{A}{article}{
   author={Andr\'{a}sfai, B.},
   title={\"{U}ber ein Extremalproblem der Graphentheorie},
   language={German},
   journal={Acta Math. Acad. Sci. Hungar.},
   volume={13},
   date={1962},
   pages={443--455},
   issn={0001-5954},
   review={\MR{145503}},
   doi={10.1007/BF02020809},
}

\bib{AES}{article}{
	author={Andr\'{a}sfai, B.},
	author={Erd\H{o}s, P.},
	author={S\'{o}s, V. T.},
	title={On the connection between chromatic number, maximal clique and
	minimal degree of a graph},
	journal={Discrete Math.},
	volume={8},
	date={1974},
	pages={205--218},
	issn={0012-365X},
	review={\MR{340075}},
	doi={10.1016/0012-365X(74)90133-2},
}

\bib{BMPP2018}{article}{
   author={Bedenknecht, Wiebke},
   author={Mota, Guilherme Oliveira},
   author={Reiher, Chr.},
   author={Schacht, Mathias},
   title={On the local density problem for graphs of given odd-girth},
   journal={J. Graph Theory},
   volume={90},
   date={2019},
   number={2},
   pages={137--149},
   issn={0364-9024},
   review={\MR{3891931}},
   doi={10.1002/jgt.22372},
}

\bib{BE76}{article}{
   author={Bollob{\'a}s, B{\'e}la},
   author={Erd{\H{o}}s, Paul},
   title={On a Ramsey-Tur\'an type problem},
   journal={J. Combinatorial Theory Ser. B},
   volume={21},
   date={1976},
   number={2},
   pages={166--168},
   review={\MR{0424613}},
}
		
\bib{B10}{article}{
	author={Brandt, Stephan},
	title={Triangle-free graphs whose independence number equals the degree},
	journal={Discrete Math.},
	volume={310},
	date={2010},
	number={3},
	pages={662--669},
	issn={0012-365X},
	review={\MR{2564822}},
	doi={10.1016/j.disc.2009.05.021},
}
		
\bib{BP}{article}{
   author={Brandt, Stephan},
   author={Pisanski, Toma\v{z}},
   title={Another infinite sequence of dense triangle-free graphs},
   journal={Electron. J. Combin.},
   volume={5},
   date={1998},
   pages={Research Paper 43, 5},
   review={\MR{1644230}},
}

\bib{BT}{article}{
	author={Brandt, Stephan},
	author={Thomass\'e, St\'ephan},
	title={Dense triangle-free graphs are four-colorable: A solution to the 
		Erd\H{o}s-Simonovits problem},
	note={Available from Thomass\'e's webpage at 
		\url{http://perso.ens-lyon.fr/stephan.thomasse/}}	
}
				
\bib{EHSS}{article}{
	author={Erd{\H{o}}s, P.},
	author={Hajnal, A.},
	author={S{\'o}s, Vera T.},
	author={Szemer{\'e}di, E.},
	title={More results on Ramsey-Tur\'an type problems},
	journal={Combinatorica},
	volume={3},
	date={1983},
	number={1},
	pages={69--81},
	issn={0209-9683},
	review={\MR{716422}},
	doi={10.1007/BF02579342},
}

\bib{ESim}{article}{
   author={Erd\H{o}s, P.},
   author={Simonovits, M.},
   title={A limit theorem in graph theory},
   journal={Studia Sci. Math. Hungar},
   volume={1},
   date={1966},
   pages={51--57},
   issn={0081-6906},
   review={\MR{0205876}},
}

\bib{ES}{article}{
	author={Erd\H{o}s, P.},
	author={Simonovits, M.},
	title={On a valence problem in extremal graph theory},
	journal={Discrete Math.},
	volume={5},
	date={1973},
	pages={323--334},
	issn={0012-365X},
	review={\MR{342429}},
	doi={10.1016/0012-365X(73)90126-X},
}
		
\bib{ES69}{article}{
	author={Erd{\H{o}}s, P.},
	author={S{\'o}s, Vera T.},
	title={Some remarks on Ramsey's and Tur\'an's theorem},
	conference={
		title={Combinatorial theory and its applications, II},
			address={Proc. Colloq., Balatonf\"ured},
			date={1969},
	},
	book={
		publisher={North-Holland, Amsterdam},
	},
	date={1970},
	pages={395--404},
	review={\MR{0299512}},
}
			
\bib{ES46}{article}{
   author={Erd\"{o}s, P.},
   author={Stone, A. H.},
   title={On the structure of linear graphs},
   journal={Bull. Amer. Math. Soc.},
   volume={52},
   date={1946},
   pages={1087--1091},
   issn={0002-9904},
   review={\MR{18807}},
   doi={10.1090/S0002-9904-1946-08715-7},
}
				
\bib{H}{article}{
	author={H\"{a}ggkvist, Roland},
	title={Odd cycles of specified length in nonbipartite graphs},
	conference={
		title={Graph theory},
			address={Cambridge},
			date={1981},
	},
	book={
		series={North-Holland Math. Stud.},
		volume={62},
		publisher={North-Holland, Amsterdam-New York},
	},
	date={1982},
	pages={89--99},
	review={\MR{671908}},
}
		
\bib{Jin}{article}{
	author={Jin, Guo Ping},
	title={Triangle-free four-chromatic graphs},
	journal={Discrete Math.},
	volume={145},
	date={1995},
	number={1-3},
	pages={151--170},
	issn={0012-365X},
	review={\MR{1356592}},
	doi={10.1016/0012-365X(94)00063-O},
}
		
\bib{Lu06}{article}{
	author={{\L}uczak, Tomasz},
	title={On the structure of triangle-free graphs of large minimum degree},
	journal={Combinatorica},
	volume={26},
	date={2006},
	number={4},
	pages={489--493},
	issn={0209-9683},
	review={\MR{2260851 (2007e:05077)}},
	doi={10.1007/s00493-006-0028-8},
}
			
\bib{LPR}{article}{
   author={\L uczak, Tomasz},
	author={Polcyn, Joanna},
	author={Reiher, Chr.},
	title={On the Ramsey-Tur\'an density of triangles},
	eprint={2001.11474},
	note={To appear in Combinatorica},
}
		
\bib{LS}{article}{
	author={\L uczak, Tomasz},
	author={Schoen, Tomasz},
	title={Solution-free sets for linear equations},
	journal={J. Number Theory},
	volume={102},
	date={2003},
	number={1},
	pages={11--22},
	issn={0022-314X},
	review={\MR{1994471}},
	doi={10.1016/S0022-314X(03)00064-7},
}

\bib{LT}{article}{
	author={{\L}uczak, Tomasz},
	author={Thomass\'e, Stephan},
	title={Coloring dense graphs via VC-dimension},
	eprint={1007.1670},
}
		
\bib{LR-a}{article}{
	author={L\"{u}ders, Clara Marie},
	author={Reiher, Chr.},
	title={The Ramsey--Tur\'{a}n problem for cliques},
	journal={Israel J. Math.},
	volume={230},
	date={2019},
	number={2},
	pages={613--652},
	issn={0021-2172},
	review={\MR{3940430}},
	doi={10.1007/s11856-019-1831-4},
}		
			
\bib{M}{article}{
	author={Mantel, W.},
	title={Problem 28 (Solution by H. Gouwentak, W. Mantel, J. Teixeira de Mattes, 
		F. Schuh and W. A. Wythoff)},
	journal={Wiskundige Opgaven},
	date={1907},
	number={10},
	pages={60--61}
}
			
\bib{SS01}{article}{
	author={Simonovits, Mikl{\'o}s},
	author={S{\'o}s, Vera T.},
	title={Ramsey-Tur\'an theory},
	note={Combinatorics, graph theory, algorithms and applications},
	journal={Discrete Math.},
	volume={229},
	date={2001},
	number={1-3},
	pages={293--340},
	issn={0012-365X},
	review={\MR{1815611}},
	doi={10.1016/S0012-365X(00)00214-4},
}

\bib{Sz72}{article}{
   author={Szemer{\'e}di, Endre},
   title={On graphs containing no complete subgraph with $4$ vertices},
   language={Hungarian},
   journal={Mat. Lapok},
   volume={23},
   date={1972},
   pages={113--116 (1973)},
   issn={0025-519X},
   review={\MR{0351897}},
}
			
\bib{Th}{article}{
	author={Thomassen, Carsten},
	title={On the chromatic number of triangle-free graphs of large minimum	degree},
	journal={Combinatorica},
	volume={22},
	date={2002},
	number={4},
	pages={591--596},
	issn={0209-9683},
	review={\MR{1956996}},
	doi={10.1007/s00493-002-0009-5},
}

\bib{T}{article}{
   author={Tur\'{a}n, Paul},
   title={Eine Extremalaufgabe aus der Graphentheorie},
   language={Hungarian, with German summary},
   journal={Mat. Fiz. Lapok},
   volume={48},
   date={1941},
   pages={436--452},
   issn={0302-7317},
   review={\MR{18405}},
}
					
\end{biblist}
\end{bibdiv}
\end{document}